\theoremstyle{plain}
\newtheorem{theorem}{Theorem}[section]
\newtheorem{lemma}[theorem]{Lemma}
\newtheorem{corollary}[theorem]{Corollary}
\newtheorem{proposition}[theorem]{Proposition}
\newtheorem{claim}{Claim}
\theoremstyle{definition}
\newtheorem{definition}[theorem]{Definition}
\newtheorem{remark}[theorem]{Remark}
\def\N{\mathbb{N}}
\def\R{\mathbb{R}}
\def\e{\varepsilon}
\let\O=\Omega
\newcommand{\diverg}{\nabla\cdot}
\DeclareMathOperator{\diff}{d\!}
\DeclarePairedDelimiter{\norm}{\lVert}{\rVert}
\DeclarePairedDelimiter{\abs}{\lvert}{\rvert}
\newcommand{\suchthat}{\ifnum\currentgrouptype=16 \mathrel{}\middle|\mathrel{}\else\mid\fi}
\newcommand{\restrict}[1]{\bigr\rvert_{#1}}
\newcommand{\step}[2]{\medskip\noindent\emph{Step #1. #2.}}
\numberwithin{equation}{section}
\begin{document}

\setlength{\parskip}{1pt plus 1pt minus 1pt}

\title{Second order local minimal-time Mean Field Games}

\author{Romain Ducasse}
\address{Université de Paris and Sorbonne Université, CNRS, Laboratoire Jacques-Louis Lions (LJLL), F-75006 Paris, France.}
\email{ducasse@math.univ-paris-diderot.fr}

\author[Guilherme Mazanti]{Guilherme Mazanti$^*$}
\address{Uni\-ver\-sit\'e Paris-Saclay, CNRS, Cen\-trale\-Sup\-\'elec, Inria, La\-bo\-ra\-toire des signaux et sys\-t\`emes, 91190, Gif-sur-Yvette, France.}
\email{guilherme.mazanti@inria.fr}

\author{Filippo Santambrogio}
\address{Institut Camille Jordan, Universit\'e Claude Bernard - Lyon 1; 43 boulevard du 11 novembre 1918, 69622 Villeurbanne cedex, France \& Institut Universitaire de France.}
\email{santambrogio@math.univ-lyon1.fr}

\begin{abstract}
The paper considers a forward-backward system of parabolic PDEs arising in a Mean Field Game (MFG) model where every agent controls the drift of a trajectory subject to Brownian diffusion, trying to escape a given bounded domain $\Omega$ in minimal expected time. Agents are constrained by a bound on the drift depending on the density of other agents at their location. Existence for a finite time horizon $T$ is proven via a fixed point argument, but the natural setting for this problem is in infinite time horizon. Estimates are needed to treat the limit $T\to\infty$, and the asymptotic behavior of the solution obtained in this way is also studied. This passes through classical parabolic arguments and specific computations for MFGs. Both the Fokker--Planck equation on the density of agents and the Hamilton--Jacobi--Bellman equation on the value function display Dirichlet boundary conditions as a consequence of the fact that agents stop as soon as they reach $\partial\Omega$. The initial datum for the density is given, and the long-time limit of the value function is characterized as the solution of a stationary problem.
\end{abstract}

\keywords{Mean Field Games, congestion games, parabolic PDEs, MFG system, existence of solutions, asymptotic behavior}

\subjclass[2020]{35Q89, 35K40, 35B40, 35A01, 35D30}

\maketitle

\newcommand\blfootnote[1]{%
  \begingroup
  \renewcommand\thefootnote{}\footnote{#1}%
  \addtocounter{footnote}{-1}%
  \endgroup
}

\blfootnote{$^*$Corresponding author}

\hypersetup{pdftitle={Second order local minimal-time Mean Field Games}, pdfauthor={Romain Ducasse, Guilherme Mazanti, Filippo Santambrogio}}

\tableofcontents

\section{Introduction}

Introduced around 2006 by Jean-Michel Lasry and Pierre-Louis Lions \cite{Lasry2006JeuxI, Lasry2006JeuxII, Lasry2007Mean} and at the same time by Peter Caines, Minyi Huang, and Roland Malham\'e \cite{Huang2003Individual, Huang2007Large, Huang2006Large}, the theory of Mean Field Games (MFGs, for short) describes the interaction of a continuum of players, assumed to be rational, indistinguishable, and negligible, when each one tries to solve a dynamical control problem influenced only by the average behavior of the other players (through a mean-field type interaction, using the physicists' terminology). The Nash equilibrium in these continuous games is described by a system of PDEs: a Hamilton--Jacobi--Bellman equation for the value function of the control problem of each player, where the distribution (density) of the players appears, coupled with a continuity equation describing the evolution of such a density, where the velocity field is the optimal one in order to solve the control problem, and is therefore related to the gradient of the value function. This system is typically forward-backward in nature: the density evolves forward in time starting from a given initial datum, and the value function backward in time, according to Bellman's dynamical programming principle, and its final value at a given time horizon $T$ is usually known.

The literature about MFG theory is quickly growing and many references are available. The 6-year course given by P.-L.~Lions at Collège de France, for which video-recording is available in French \cite{LLperso}, explains well the birth of the theory, but the reader can also refer to the lecture notes by P.~Cardaliaguet \cite{CardaliaguetNotes}, based on the same course.

In most of the MFG models studied so far the agents consider a fixed time interval $[0,T]$ and optimize a trajectory $x:[0,T]\to \Omega$ (where $\Omega\subset\R^d$ is the state space) trying to minimize a cost of the form 
$\int_0^T L(t,x(t),x^\prime(t),\rho_t)\diff t+\Psi(x(T),\rho_T),$
where $\rho_t$ denotes the distribution of players at time $t$. The function $L$ is typically increasing in $\abs{x^\prime}$ and, in some sense, in $\rho$. This means that high velocities are costly, and passing through areas where the population is strongly concentrated is also costly. Some MFGs, called \emph{MFGs of congestion} (see, for instance, \cite{AchPor-cong}), consider costs which include a product of the form $\rho_t(x(t))^\alpha\abs{x^\prime(t)}^\beta$ (for some exponents $\alpha, \beta > 0$), which means that high velocities are costly, and that they are even more costly in the presence of high concentrations. These models present harder mathematical difficulties compared to those where the cost is decomposed into $L(t,x(t),x^\prime(t))+g(t,x(t),\rho_t)$. Indeed, in many cases the latter MFG admits a variational formulation: equilibria can be found by minimizing a global energy among all possible evolutions $(\rho_t)_t$ (hence, they are {\it potential games}). This allows to prove the existence of the equilibrium via semicontinuity methods, and we refer to \cite{Benamou2017Variational} and \cite{SantambrogioCIME} for a detailed discussion of this branch of MFG theory.

When the MFG has no variational interpretation, then the existence of a solution is usually obtained via fixed-point theorems, but these theorems require much more regularity. Roughly speaking, given an evolution $\rho$ one computes the corresponding value function $\varphi$ as a solution to a Hamilton--Jacobi--Bellman equation and, given $\varphi$, one computes a new density evolution $\tilde\rho$ by following an evolution equation. We need existence, uniqueness, and stability results for these equations in order to find a fixed point $\tilde\rho=\rho$. This usually requires regularity of the velocity field $-\nabla\varphi$, which is difficult to prove, and can be essentially only obtained in two different frameworks: either the dependence of the cost functions on the distribution $\rho$ is highly regularizing (which usually means that it is non-local, and passes through averaged quantities such as convolutions $\int \eta(x-y)\diff\rho(y)$), or diffusion of the agents is taken into account, transforming the optimal control problem into a stochastic one. In this latter case, agents minimize $\mathbb E[\int_0^T L(t, X_t, \alpha(t), \rho_t) \diff t+\Psi(X_T,\rho_T)]$ where the process $X$ follows $\diff X_t=\alpha_t \diff t + \diff B_t$ and $(B_t)_{t \geq 0}$ represents a standard Brownian motion.

In \cite{Mazanti2019Minimal}, the second and third authors of the present paper introduced a different class of models, called \emph{minimal-time MFGs}. The main difference is that instead of considering a cost for the players penalizing both the velocity and the density, and minimizing the integral of such a cost on a fixed time interval $[0,T]$, the dynamics is subject to a constraint where the maximal velocity of the agents cannot exceed a quantity depending on the density $\rho_t$, and the goal of each agent is to arrive to a given target as soon as possible. In the typical situation, the target of the agents is the boundary $\partial\Omega$ of the domain where the evolution occurs. This can model, for instance, an evacuation phenomenon in crowd motion. The system that one obtains is the following
\begin{equation}
\label{MFGSyst-MS}
\left\{
\begin{aligned}
& \partial_t \rho - \diverg\left(\rho k[\rho] \frac{\nabla\varphi}{\abs{\nabla\varphi}}\right) = 0, & \quad & \text{ in } (0, T) \times \Omega, \\
& -\partial_t \varphi + k[\rho] \abs{\nabla\varphi} - 1 = 0, & & \text{ in } (0, T) \times \Omega, \\
& \rho(0, x) = \rho_0(x), & & \text{ in } \Omega, \\
& \varphi(t, x) = 0, & & \text{ on } (0, T) \times \partial\Omega,
\end{aligned}
\right.
\end{equation}
where the function $k[\rho_t](x)$ denotes the maximal speed that agents can have at point $x$ at time $t$, i.e., the dynamics is constrained to satisfy $\abs{x^\prime(t)}\leq k[\rho_t](x(t))$. 
Ideally, one would like to choose $k$ to be a non-increasing function of the density itself, such as $k[\rho](x)=(1-\rho(x))_+$. This choice is what is done in the well-known Hughes' model for crowd motion \cite{Hughes2002Continuum, Hughes2003Flow}. Indeed, this model is very similar to Hughes', which also considers agents who aim at leaving in minimal time a bounded domain under a congestion-dependent constraint on their speeds.

The main difference between the model in \cite{Mazanti2019Minimal} (from which the present paper stems) and Hughes' is that, in the latter, at each time, an agent moves in the optimal direction to the boundary assuming that the distribution of agents remains constant, whereas in \cite{Mazanti2019Minimal} and here agents take into account the future evolution of the distribution of agents in the computation of their optimal trajectories. This accounts for the time derivative in the Hamilton--Jacobi--Bellman equation from \eqref{MFGSyst-MS}, which is the main difference between \eqref{MFGSyst-MS} and the equations describing the motion of agents in Hughes' model and stands for the anticipation of future behavior of other agents. 

Another crucial (and disappointing) similarity between the above MFG system and Hughes' model is the fact that general mathematical results do not exist in the case $k[\rho]=(1-\rho)_+$ and more generally in the local case (except few results in the Hughes case in 1D). Indeed, the lack of regularity makes the model too hard to study, and the MFG case is not variational.

 In some sense the closest MFG model to this one is the one with multiplicative costs in \cite{AchPor-cong} (MFG with congestion). Indeed, an $L^\infty$ constraint $\abs{x^\prime}\leq k[\rho]$ can be seen as a limit as $m\to \infty$ of an integral penalization $$\int \abs*{\frac{\abs{x^\prime(t)}}{k[\rho_t](x(t))}}^m\diff t.$$
Note that the boundaries of the time interval have been omitted on purpose from the above integral, since the model in \cite{AchPor-cong} is set on a fixed time horizon but this is not part of our setting. For MFG with congestion, \cite{AchPor-cong} presents not only existence but also uniqueness results, under the assumption that the exponents appearing in the running cost satisfy a certain inequality. Unfortunately this inequality is never satisfied in the limit $m=+\infty$ as above, and it is not surprising that in our work we are not able to establish uniqueness results for our MFG system. Additional results for MFG with congestion were presented, for instance, in \cite{GomesVos}, under a smallness assumption on the time horizon, but this assumption cannot be made here, as the model is exactly meant to consider the case where a time horizon is not fixed. 

Because of these difficulties, \cite{Mazanti2019Minimal} studied the case of a non-local dependence of $k$ w.r.t.\ $\rho$ (say, $k[\rho](x)=\kappa(\int\eta(x-y)\diff\rho(y))$, for a non-increasing function $\kappa$ and a positive convolution kernel $\eta$), and proved existence of an equilibrium, characterized it as a solution of a non-local MFG system, and analyzed some examples, including numerical simulations. Instead, in the present paper we want to study the local case with diffusion. 

This means that we will consider a local dependence $k[\rho](x):=\kappa(\rho(x))$, and each agent solves a stochastic control problem
$$\inf\left\{\mathbb E[\tau]\,:\,X(\tau)\in\partial\Omega, X(0)=x_0,\,\diff X_t=\alpha_t \diff t+\sqrt{2\nu}\diff B_t,\,\abs{\alpha_t}\leq \kappa(\rho(t, X_t))\right\},$$
where $(B_t)_{t \geq 0}$ denotes a standard Brownian motion and the Brownian motions for all players are assumed to be mutually independent. Defining the corresponding value function $\varphi$, from classical results on stochastic optimal control (see \cite[Chapter IV]{Fleming2006Controlled}), under suitable assumptions, the optimal control is given in feedback form by
\[
\alpha_t = - \kappa(\rho(t, X_t))\frac{\nabla\varphi(t, X_t)}{\abs*{\nabla\varphi(t, X_t)}},
\]
(a definition which has to be carefully adapted to the case $\nabla\varphi=0$); moreover, the value function solves the Hamilton--Jacobi--Bellman equation
\[ -\partial_t \varphi(t, x) - \nu \Delta \varphi(t, x) + K(t, x) \abs*{\nabla\varphi(t, x)} - 1 = 0, \quad (t, x) \in [0, T) \times \Omega,\]
for $K=\kappa(\rho)$. Hence, we know the drift of the optimal stochastic processes followed by each agent, and this allows to write the Fokker--Planck equation solved by the law of this process. Putting together all this information, we obtain the following MFG system
\begin{equation}
\label{MFGSystInfiniteHorizon}
\left\{
\begin{aligned}
& \partial_t \rho - \nu \Delta \rho - \diverg\left(\rho \kappa(\rho) \frac{\nabla\varphi}{\abs{\nabla\varphi}}\right) = 0, & \quad & \text{ in } \mathbb R_+ \times \Omega, \\
& -\partial_t \varphi - \nu \Delta \varphi + \kappa(\rho) \abs{\nabla\varphi} - 1 = 0, & & \text{ in } \mathbb R_+ \times \Omega, \\
& \begin{aligned}
\rho(0, x) & = \rho_0(x), \\ 
\rho(t, x) & = 0, \quad \varphi(t, x) = 0,
\end{aligned} & & \begin{aligned}
& \text{ in } \O, \\
& \text{ on } \mathbb R_+ \times \partial\Omega,
\end{aligned}
\end{aligned}
\right.
\end{equation}
where $\Omega \subset \mathbb R^d$ is an open and bounded set, whose boundary will be supposed to be of class $C^2$ in this paper, $\nu > 0$ is a fixed constant, $\kappa: \mathbb R \to (0, +\infty)$, and $\rho_0\geq 0$ is the initial density. The Dirichlet condition on $\varphi$ comes as usual from the fact that, for agents who are already on the boundary, the remaining time to reach it is zero, and the Dirichlet condition on $\rho$ comes from the fact that we stop the evolution of a particle as soon as it touches the boundary (absorbing boundary conditions).

A crucial difference with the previous paper \cite{Mazanti2019Minimal} concerns the time horizon. If we suppose that $\kappa$ is bounded from below in the model without diffusion, it is not difficult to see that all agents will have left the domain after a common finite time, so that the final value of $\varphi$ is not really relevant, and the problem can be studied on a finite interval $[0,T]$. This is not the case when there is diffusion, as a density following a Fokker--Planck equation with a bounded drift cannot fully vanish in finite time. As a consequence, the model should be studied on the unbounded interval $[0,\infty)$. For every time $t<\infty$ there is still mass everywhere, but this mass decreases to $0$ as $t\to+\infty$, which suggests that the value function $\varphi$ should converge to a function, that we call $\Psi$, which is the value function for the corresponding control problem with no mass, i.e.\ when $\kappa=\kappa(0)$. Since in this control problem $\kappa$ is independent of time, $\Psi$ is a function of $x$ only and solves a stationary Hamilton--Jacobi--Bellman equation which takes the form of an elliptic PDE
$$-\nu \Delta\Psi+\kappa(0)\abs{\nabla\Psi}-1=0$$
with Dirichlet boundary conditions on $\partial\Omega$. It is then reasonable to investigate whether solutions of the above system satisfy further $\rho_t\to 0$ and $\varphi_t\to\Psi$ as $t\to+\infty$.

In order to study the above system, we will first study an artificial finite-horizon setting, where we stop the game at time $T$, choose a penalization $\psi:\Omega\to\R_+$ with $\psi=0$ on $\partial\Omega$, and look at the stochastic optimal control problem
\begin{multline*}
\inf\Big\{\mathbb E[\min\{\tau,T\}+\psi(X_{\min\{\tau,T\}})]\,: \\ \,X(\tau)\in\partial\Omega, X(0)=x_0,\,\diff X_t=\alpha_t \diff t+\sqrt{2\nu}\diff B_t,\,\abs{\alpha_t}\leq \kappa(\rho(t, X_t))\Big\}.
\end{multline*}
This gives rise to the MFG system
\begin{equation}
\label{MFGSystFiniteHorizon}
\left\{
\begin{aligned}
& \partial_t \rho - \nu \Delta \rho - \diverg\left(\rho \kappa(\rho) \frac{\nabla\varphi}{\abs{\nabla\varphi}}\right) = 0, & \quad & \text{ in } (0, T) \times \Omega, \\
& -\partial_t \varphi - \nu \Delta \varphi + \kappa(\rho) \abs{\nabla\varphi} - 1 = 0, & & \text{ in } (0, T) \times \Omega, \\
& \begin{aligned}
\rho(0, x) & = \rho_0(x), \quad & \varphi(T, x) & = \psi(x), \\ 
\rho(t, x) & = 0,  & \varphi(t, x) & = 0,
\end{aligned} & & \begin{aligned}
& \text{ in } \O, \\
& \text{ on } (0, T) \times \partial\Omega,
\end{aligned}
\end{aligned}
\right.
\end{equation}
which corresponds to \eqref{MFGSystInfiniteHorizon} with the unbounded time interval $\mathbb R_+$ replaced by $(0, T)$ and the additional final condition $\varphi(T,x)=\psi(x)$. We will prove the existence of a solution of the system for finite $T$ (note that for this system, as well as for its infinite-horizon counterpart, we are not able to prove uniqueness), and then consider the limit as $T\to\infty$. In order to guarantee suitable bounds, we just need to choose a sequence of final data $\psi_T$, possibly depending on $T$, which is uniformly bounded. We will then get at the limit a solution of the limit system which automatically satisfies $\rho_t\to 0$ (in the sense of uniform convergence) and $\varphi_t\to\Psi$ (this convergence being both uniform and strong in $H^1_0$). 

The paper is organized as follows. After this introduction, Section~\ref{SecPreliminary} presents the tools that we need to study the two separate equations appearing in System \eqref{MFGSystFiniteHorizon} on a finite horizon, which come from the classical theory of parabolic equations. Section~\ref{SecMFGFinite} is devoted to the existence of solutions of \eqref{MFGSystFiniteHorizon}. After providing a precise definition of solution of \eqref{MFGSystFiniteHorizon} taking care of the case $\nabla\varphi = 0$, we use the estimates of Section~\ref{SecPreliminary} to prove existence via a fixed-point argument based on Kakutani's theorem. Section~\ref{SecMFGInfinite} concerns the limit $T\to\infty$. In this section, some estimates of Section~\ref{SecPreliminary} need to be made more precise, in order to see how constants depend on the time horizon $T$. In this way we are able to prove existence of a limit of the solutions of \eqref{MFGSystFiniteHorizon} as the time horizon $T$ tends to $+\infty$ and that this limit solves the limit system \eqref{MFGSystInfiniteHorizon}. Then we consider the asymptotic behavior of a solution $(\rho, \varphi)$ of \eqref{MFGSystInfiniteHorizon} as $t \to +\infty$, proving first $\rho_t\to 0$ in $L^1$ and, thanks to a parabolic regularization argument, also in $L^\infty$.
To prove convergence in $L^1$, which is true for general Fokker--Planck systems under very mild assumptions, we exploit the MFG nature of the system, i.e.\ the coupling between the two equations, which also provides exponential decrease. We then consider the limit in time of $\varphi$, and prove that any bounded solution of this equation, once we know $\kappa(t,x)\to\kappa(0)$, can only converge as $t\to+\infty$ to the stationary function $\Psi$. This convergence is a priori very weak, but we are able to improve it into $L^\infty\cap H^1_0$, and to prove that the uniform convergences of both $\rho$ and $\varphi$ occur exponentially fast. The paper is then completed by an appendix, which details some global $L^\infty$ estimates for a large class of parabolic equations, including the estimates that we use to prove uniform convergence in time of $\rho_t$ and $\varphi_t$ to $0$ and $\Psi,$ respectively. These estimates are not surprising and not difficult to prove, using standard Moser iterations, but are not easy to find in the literature under the sole assumption of boundedness of the drift term in the divergence. The computations and the results are essentially the same as in the appendix of \cite{Cardaliaguet2013Long1}, but the boundary conditions are different. 

\section{Preliminary results}
\label{SecPreliminary}

This section presents some preliminary results on Fokker--Planck and Hamilton--Jacobi--Bellman equations which are useful for the analysis of the Mean Field Game systems \eqref{MFGSystInfiniteHorizon} and \eqref{MFGSystFiniteHorizon}. We recall that, in the whole paper, $\Omega$ denotes an open and bounded set whose boundary $\partial\Omega$ is assumed to be $C^2$. Even though some of the results presented in this preliminary section also hold without the smoothness assumption on $\partial\Omega$ (such as existence and uniqueness results for both Fokker--Planck and Hamilton--Jacobi--Bellman equations in Propositions~\ref{PropFP} and \ref{PropHJB}), this assumption is first used to obtain higher regularity of solutions of Hamilton--Jacobi--Bellman equations in Proposition~\ref{PropHJB} and is required for almost all of the subsequent results, including in particular our main results in Sections~\ref{SecMFGFinite} and \ref{SecMFGInfinite}, as a consequence of the need of higher regularity of $\varphi$.

\subsection{Fokker--Planck equation}

We recall some results on the Fokker--Planck equation on a bounded domain $\O\subset\R^d$ in finite time horizon $T \in (0, +\infty)$,
\begin{equation}
\label{FPEqn}
\left\{
\begin{aligned}
& \partial_t \rho - \nu \Delta \rho + \diverg (\rho V) = 0 & \quad & \text{ in } (0, T) \times \Omega, \\
& \rho(0, x) = \rho_0(x) & & \text{ in } \Omega, \\
& \rho(t, x) = 0 & & \text{ in } [0, T] \times \partial\Omega,
\end{aligned}
\right.
\end{equation}
where $V: (0, T) \times \Omega \to \mathbb R^d$ is a given velocity field. We will only focus on the case where $V$ is bounded, an assumption which is satisfied in the cases of interest for this paper and which strongly simplifies the analysis. The results presented in this short section are a mixture of classical results (for which we mainly refer to \cite[Section~7.1]{Evans1998Partial} or \cite[Chapter~III]{Ladyzhenskaja1968Linear}), recent results obtained by Porretta in \cite{Porretta2015Weak}, and extra computations which are not original but are difficult to find in the literature, which we present in the Appendix. 

 \begin{definition}
 \label{DefiWeakSolFP}
Let $\nu > 0$, $V \in L^\infty((0, T) \times \Omega ; \mathbb R^d)$, and $\rho_0 \in L^1(\Omega)$. We say that $\rho \in L^1((0, T)\times\Omega)$ is a \emph{weak solution} of \eqref{FPEqn} if, for every $\eta \in C^2([0, T] \times \Omega)$ such that $\eta\restrict{[0, T] \times \partial\Omega} = 0$ and $\eta\restrict{\{T\} \times \Omega} = 0$, one has
 \begin{equation}
\label{FPWeak}
 -\int_0^T \int_\Omega \rho \partial_t \eta \diff x \diff t - \int_0^T \int_{\Omega} \left(\nu \rho \Delta\eta + \rho V\cdot \nabla\eta\right) \diff x \diff t = \int_\Omega \rho_0(x) \eta(0, x) \diff x.
 \end{equation}
 \end{definition}

We observe that, whenever equality \eqref{FPWeak} holds for $C^2$ functions, and if we have further that $\rho\in L^2((t_1,t_2);H^1_0(\Omega))\cap C^0([t_1,t_2];L^2(\O))$ for some $t_1, t_2 \in (0, T)$ with $t_1 < t_2$, then we also have
\begin{multline}
\label{FPWeakH1_2}
\int_{t_1}^{t_2} \left(\int_\Omega -\rho \partial_t \eta + \nu \nabla\rho\cdot \nabla\eta - \rho V\cdot \nabla\eta\right) \diff x \diff t  \\ = \int_\Omega \rho(t_1,x) \eta(t_1, x) \diff x-\int_\Omega \rho(t_2,x) \eta(t_2, x) \diff x.
\end{multline}
for every $\eta\in C^1_c([0,T)\times\Omega))$ and, by density, for every $\eta\in L^2((t_1, t_2);H^1_0(\Omega)) \cap C^0([t_1, t_2];\allowbreak L^2(\Omega))$ such that $\partial_t \eta \in L^2((t_1, t_2); H^{-1}(\Omega))$. Of course it is well-known that, in case $\rho$ is more regular, other test functions can also be accepted, and that if $\rho\in C^2$ then the equation is satisfied in a classical sense.
 
We now state a proposition summarizing all the main results that we will use.

\begin{proposition}
\label{PropFP}
Let $\nu > 0$, $V \in L^\infty((0, T) \times \Omega; \mathbb R^d)$, and $\rho_0 \in L^1(\Omega)$ be a given non-negative initial datum. Then \eqref{FPEqn} admits a unique weak solution $\rho$. In addition, we have $\rho\geq 0$ and $\rho\in C^0([0,T];L^1(\Omega))$ with $\norm{\rho_t}_{L^1}\leq \norm{\rho_0}_{L^1}$, as well as $\nabla \rho\in L^q((0, T) \times \Omega)$ and $\partial_t\rho\in L^q((0, T); W^{-1,q}(\Omega))$ for all $q<\frac{d+2}{d+1}$ and $\rho\in L^r((0,T)\times\Omega)$ for all $r<\frac{d+2}{d}$, and the norms of $\rho, \nabla \rho$ and $\partial_t\rho$ in the above spaces are bounded by quantities only depending on $\norm{\rho_0}_{L^1}$.
Moreover, for every $t_0>0$, we also have $\rho\in L^\infty((t_0,T)\times\Omega)\cap L^2((t_0,T);H^1_0(\Omega))\cap C^0([t_0,T];L^2(\O))$ and $\partial_t\rho\in L^2((t_0,T);H^{-1}(\Omega)) $.
\end{proposition}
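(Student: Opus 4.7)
The plan is to construct the solution by approximation from smooth data and pass to the limit using Boccardo--Galloüet style parabolic estimates for $L^1$ data, and then to bootstrap interior regularity for $t\geq t_0$ by a parabolic Moser--De Giorgi argument.

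First I would approximate: let $\rho_0^n\in L^\infty(\O)$ be non-negative truncations of $\rho_0$ with $\rho_0^n\to\rho_0$ in $L^1(\O)$. Classical linear parabolic theory (\cite[Chapter III]{Ladyzhenskaja1968Linear} or \cite[Section 7.1]{Evans1998Partial}) gives, for each $n$, a unique weak solution $\rho^n\in L^2((0,T);H^1_0(\O))\cap C^0([0,T];L^2(\O))$ of \eqref{FPEqn} with datum $\rho_0^n$ and bounded drift $V$. The maximum principle yields $\rho^n\geq 0$, and the bound $\norm{\rho^n_t}_{L^1}\leq\norm{\rho_0^n}_{L^1}$ follows either by testing with a smooth approximation of $\mathbf 1$ and using $\rho^n=0$ on $\partial\O$ (so the boundary flux has the right sign), or equivalently by noting that the adjoint equation with datum $1$ produces a supersolution $\leq 1$ of the dual problem. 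Then I would establish the Boccardo--Galloüet type bounds: testing the equation against $T_k(\rho^n)$ and against $(1+\rho^n)^{-\alpha}\rho^n$, and interpolating the resulting $L^\infty_tL^1_x$ and $L^2_tH^1_x$ controls via Gagliardo--Nirenberg, one gets $\nabla\rho^n\in L^q$ for every $q<(d+2)/(d+1)$ and $\rho^n\in L^r$ for every $r<(d+2)/d$, with bounds depending only on $\norm{\rho_0}_{L^1}$, $\nu$, $\norm{V}_\infty$, $T$, $\O$. The estimate on $\partial_t\rho^n\in L^q((0,T);W^{-1,q}(\O))$ then falls out by rewriting the equation, since both $\nu\Delta\rho^n$ and $\diverg(\rho^n V)$ lie in that space once $\nabla\rho^n\in L^q$.

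With these uniform bounds, I would extract a subsequence such that $\rho^n\rightharpoonup \rho$ and $\nabla\rho^n\rightharpoonup \nabla\rho$ weakly in the relevant $L^q$ spaces; Aubin--Lions applied with $\nabla\rho^n\in L^q$ and $\partial_t\rho^n\in L^q(W^{-1,q})$ upgrades convergence to strong $L^1_{t,x}$, which is enough to pass to the limit in \eqref{FPWeak} and to transfer non-negativity, the $L^1$ mass bound, and continuity in time with values in $L^1$ to the limit $\rho$. For uniqueness in this low-regularity class I would invoke Porretta's result \cite{Porretta2015Weak}, which gives uniqueness of weak/renormalized solutions of Fokker--Planck equations with bounded drift and $L^1$ data; alternatively one can argue directly by duality, testing the difference of two solutions against the solution of the smooth adjoint problem $-\partial_t\eta-\nu\Delta\eta-V\cdot\nabla\eta=f$ with $\eta=0$ on $\partial\O$ and $\eta(T)=0$, which is well-posed with $L^\infty$ right-hand side $f$.

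Finally, to obtain the interior regularity for $t\geq t_0>0$: starting from $\rho\in L^r$ with some $r>1$ and $V\in L^\infty$, a parabolic Moser iteration on the slab $(t_0/2,T)\times\O$ (the version needed is exactly the one proved in the appendix of the paper, modeled on \cite{Cardaliaguet2013Long1}, since $V$ is only $L^\infty$ inside the divergence) produces $\rho\in L^\infty((t_0,T)\times\O)$. Once $\rho$ is bounded on $[t_0,T]\times\O$, testing the equation against $\rho$ itself on that interval is legitimate and yields the energy identity, giving $\rho\in L^2((t_0,T);H^1_0(\O))\cap C^0([t_0,T];L^2(\O))$; then $\partial_t\rho\in L^2((t_0,T);H^{-1}(\O))$ follows from the equation. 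The main obstacle, in my view, is the uniqueness step for merely $L^1$ data—everything else is essentially truncation plus Gagliardo--Nirenberg plus a compactness argument—and that is precisely the point where Porretta's framework is indispensable; the only other subtle point is keeping the Moser iteration quantitative up to the lateral boundary, which is why the appendix estimates are needed rather than off-the-shelf interior Harnack bounds.
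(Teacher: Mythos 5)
Your route matches the paper's almost step for step: approximate the initial datum and invoke the classical $L^2$ theory to produce smooth solutions, use the non-negativity and mass-decrease of the approximants, obtain the $L^q/L^r$ and dual-space bounds (the paper simply cites \cite{Porretta2015Weak} for these, where you propose to re-derive them by Boccardo--Gallou\"et truncation tests — same estimates, same spirit), pass to the limit via Aubin--Lions, quote Porretta for uniqueness, and then upgrade to $L^\infty$ on $(t_0,T)$ by the Moser iteration from the Appendix before bootstrapping the $L^2(H^1_0)\cap C^0(L^2)$ and $H^{-1}$ estimates from classical theory. So this is essentially the paper's proof, and it is correct.

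The one point worth flagging is your aside that uniqueness could ``alternatively'' be obtained by direct duality against the adjoint problem $-\partial_t\eta-\nu\Delta\eta-V\cdot\nabla\eta=f$: with $V$ merely $L^\infty$, the adjoint solution $\eta$ is not $C^2$, whereas the weak formulation \eqref{FPWeak} admits only $C^2$ test functions vanishing on the lateral boundary and at $T$. One must therefore regularize $V$ in the adjoint problem and control the commutator term $\int\int\rho\,(V-V_\varepsilon)\cdot\nabla\eta_\varepsilon$, which requires precisely the kind of a priori estimate (finiteness of $\int\int\rho|V|^2$, in our case guaranteed by $\rho\in L^1$ and $V\in L^\infty$) that Porretta's argument is built around. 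So the duality route is not a shortcut around \cite{Porretta2015Weak}; it is essentially a sketch of its proof, and rightly the paper, like you, treats Porretta's uniqueness theorem as the black box to invoke.
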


Of course we do not provide a full proof of the above results, but we explain below how to deduce the different parts of the statement from the most well-known literature and the relevant references.

\begin{proof}
The definition of the solution is exactly the one used in \cite{Porretta2015Weak}, where the key assumption is $\rho|V|^2\in L^1((0,T)\times\Omega)$. In our case, where $V$ is bounded, this assumption is satisfied as soon as $\rho\in L^1((0,T)\times\Omega)$. One of the main results of \cite{Porretta2015Weak} is exactly the uniqueness of the solution in this class, and this can be applied to the present setting. The same paper also guarantees the estimates $\nabla \rho\in L^q((0, T) \times \Omega)$, $\partial_t\rho\in L^q((0, T); W^{-1,q}(\Omega))$, $\rho\in L^r((0,T)\times\Omega)$, and the $L^1$ bound.

Existence is not included in \cite{Porretta2015Weak} but in the particular case $V\in L^\infty$ it is easy to obtain by regularization and compactness. Indeed, one can apply the classical $L^2$ theory of \cite[Chapter III]{Ladyzhenskaja1968Linear} to an approximated initial datum, and obtain a sequence of solutions: the $L^r$ bounds of \cite{Porretta2015Weak}, which only depend on the initial $L^1$ norm in this setting, allow to obtain the compactness we need to pass the PDE to the limit. Note that this argument is specific to the case $V\in L^\infty$ since, otherwise, we would need to control the $L^1$ norm of $\rho|V|^2$, which is non-trivial.

By approximating $\rho_0$ and $V$ by smooth functions $\rho_{0, \varepsilon}$ and $V_\varepsilon$ with $\rho_{0, \varepsilon} \geq 0$, the corresponding solution $\rho_\varepsilon$ of \eqref{FPEqn} satisfies $\rho_\varepsilon \geq 0$ thanks to the classical maximum principle, and then this property passes to the limit and also applies to the unique weak solution $\rho$ corresponding to the original $\rho_0$ and $V$.

The local $L^\infty$ bound can be obtained thanks to the Appendix of the present paper (even if we stress that similar computations are nowadays standard). For simplicity, the bound is presented under the assumption $\rho_0\in L^r$, $r>1$, and not $\rho_0\in L^1$. Yet, the time-space $L^r$ summability already stated in the claim allows to deduce $\rho_t\in L^r$ for a.e. $t>0$, and if we choose $t<t_0$ we obtain the desired $L^\infty$ bound. Once we know that $\rho$ is locally (in time) $L^\infty$ (in space), it is also locally (in time) $L^2$ (in space), and hence the classical $L^2$ theory of \cite[Chapter~III]{Ladyzhenskaja1968Linear} provides the last estimates of the statement.
\end{proof}

\subsection{Hamilton--Jacobi--Bellman equation}

We consider the non-linear Hamilton--Ja\-co\-bi--Bellman equation in a finite time horizon
\begin{equation}
\label{HJBEqn}
\left\{
\begin{aligned}
& -\partial_t \varphi - \nu \Delta \varphi + K \abs{\nabla\varphi} - 1 = 0 & \quad & \text{ in } (0, T) \times \Omega, \\
& \varphi(T, x) = \psi(x) & & \text{ in } \Omega, \\
& \varphi(t, x) = 0 & & \text{ in } [0, T] \times \partial\Omega,
\end{aligned}
\right.
\end{equation}
where $K: (0, T) \times \Omega \to \mathbb R$ is a given function.

\begin{definition}
\label{DefiWeakSolHJB}
Let $\nu > 0$, $K \in L^\infty((0, T) \times \Omega; \mathbb R)$, and $\psi \in L^2(\Omega)$. We say that $\varphi \in L^\infty((0, T); L^2(\Omega)) \cap L^2((0, T); H^1_0(\Omega))$ is a \emph{weak solution} of \eqref{HJBEqn} if, for every $\eta \in C^1([0, T] \times \Omega)$ such that $\eta\restrict{[0, T] \times \partial\Omega} = 0$ and $\eta\restrict{\{0\} \times \Omega} = 0$, one has
\begin{equation}\label{eq test}
\int_0^T \int_\Omega \varphi \partial_t \eta + \nu \int_0^T \int_{\Omega} \nabla\varphi \cdot \nabla\eta + \int_0^T \int_\Omega (K \abs*{\nabla\varphi} - 1) \eta = \int_\Omega \psi(x) \eta(T, x) \diff x.
\end{equation}
\end{definition}

As we did after Definition~\ref{DefiWeakSolFP}, we observe that, if \eqref{eq test} holds for every $\eta$ as before, and if we assume further that $\varphi \in C^0([t_1, t_2]; L^2(\Omega))$ for some $t_1, t_2 \in (0, T)$ with $t_1 < t_2$, then we also have
\begin{multline}
\label{eq testt_0t_1}
\int_{t_0}^{t_1} \int_\Omega \left(\varphi \partial_t \eta + \nu \nabla\varphi \cdot \nabla\eta + (K \abs*{\nabla\varphi} - 1) \eta \right) \\ = \int_\Omega \varphi(t_1,x) \eta(t_1, x) \diff x-\int_\Omega \varphi(t_0,x) \eta(t_0, x) \diff x,
\end{multline}
for every $\eta \in C^1_c((0, T] \times \Omega)$ and, by density, for every $\eta \in L^2((t_1, t_2); H_0^1(\Omega)) \cap C^0([t_1, t_2];\allowbreak L^2(\Omega))$ such that $\partial_t \eta\in L^2((t_1, t_2); H^{-1}(\Omega))$.

\begin{remark}
Note that \eqref{HJBEqn}, as a Hamilton--Jacobi--Bellman equation of an optimal control problem, is backward in time: the final condition $\varphi(T, x) = \psi(x)$ is given and one solves the equation in the time interval $[0, T]$. One can apply classical results on forward PDEs to~\eqref{HJBEqn} by using the standard time reversal $t \mapsto T - t$.
\end{remark}

The next proposition gathers the main results on solutions of \eqref{HJBEqn} that will be needed in the paper.

\begin{proposition}
\label{PropHJB}
Let $\nu > 0$, $K \in L^\infty((0, T) \times \Omega)$, and $\psi \in L^2(\Omega)$. Then \eqref{HJBEqn} admits a unique weak solution $\varphi$. In addition, we have $\varphi \in C^0([0, T]; L^2(\Omega))$, and the norms of $\varphi$ in $L^\infty((0, T);L^2(\Omega))$ and $L^2((0, T); H_0^1(\Omega))$ are bounded by quantities depending only on $d$, $\nu$, $T$, $\Omega$, an upper bound on $\norm*{K}_{L^\infty((0, T) \times \Omega)}$, and $\norm*{\psi}_{L^2(\Omega)}$.

Moreover, if $\psi\geq 0$ a.e.\ in $\O$, then the unique solution also satisfies $\varphi\geq 0$ a.e.\ in $(0,T)\times\O$. If $K \geq 0$, $\psi \in H_0^1(\Omega) \cap L^\infty(\Omega)$, and $\psi \geq 0$ a.e.\ in $\Omega$, then there exists a constant $C > 0$ depending on $\nu$, $\Omega$, and $\norm{\psi}_{L^\infty}$ such that $\varphi \leq C$ a.e.\ on $(0, T) \times \Omega$.

Finally, if $\psi \in H_0^1(\Omega)$, then $\varphi \in C^0([0, T]; H_0^1(\Omega)) \cap L^2((0, T);H^2(\Omega))$, $\partial_t \varphi \in L^2((0, T) \times \Omega)$, and the norms of $\varphi$ in these spaces are bounded by quantities depending only on $d$, $\nu$, $T$, $\Omega$, an upper bound on $\norm*{K}_{L^\infty((0, T) \times \Omega)}$, and $\norm*{\psi}_{H_0^1(\Omega)}$.
\end{proposition}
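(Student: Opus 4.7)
The plan is to first reduce \eqref{HJBEqn} to a forward-in-time problem via the time reversal $u(t,x) := \varphi(T-t,x)$, so that $u$ solves $\partial_t u - \nu \Delta u + \tilde K \abs{\nabla u} - 1 = 0$ on $(0,T) \times \Omega$ with initial datum $\psi$, homogeneous Dirichlet boundary, and $\tilde K(t,x) := K(T-t,x)$; all subsequent arguments are run on this forward form. For existence I would apply Banach's fixed-point theorem to the map $\Phi : L^2(0,T; H_0^1(\Omega)) \to L^2(0,T; H_0^1(\Omega))$ sending $w$ to the unique solution of the \emph{linear} parabolic problem $\partial_t u - \nu \Delta u = 1 - \tilde K \abs{\nabla w}$ with the prescribed initial and boundary data, whose well-posedness is classical (see \cite[Chapter~III]{Ladyzhenskaja1968Linear}). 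Equipping the space with the Bielecki-type weighted norm $\norm{f}_\lambda := \left(\int_0^T e^{-\lambda t}\norm{\nabla f(t)}_{L^2(\Omega)}^2 \diff t\right)^{1/2}$, which is equivalent to the standard one for any $\lambda>0$, and testing the equation for $v := \Phi(w_1) - \Phi(w_2)$ with $e^{-\lambda t} v$ (using $\abs{\abs{\nabla w_1} - \abs{\nabla w_2}} \leq \abs{\nabla(w_1 - w_2)}$ and Young's inequality) yields
\[
\norm{\Phi(w_1) - \Phi(w_2)}_\lambda^2 \leq \frac{\norm{\tilde K}_{L^\infty}^2}{\lambda \nu}\,\norm{w_1 - w_2}_\lambda^2,
\]
which is a strict contraction once $\lambda > \norm{\tilde K}_{L^\infty}^2/\nu$. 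Uniqueness, the $L^\infty(L^2)\cap L^2(H_0^1)$ bound, and the continuity $\varphi \in C^0([0,T]; L^2(\Omega))$ (using $\partial_t u \in L^2(H^{-1})$ extracted from the equation together with a Lions--Aubin embedding) all follow from variants of the same energy estimate: testing with $u$ for the global bound, and with the difference of two hypothetical solutions for uniqueness, combined in each case with Young and Grönwall.

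Non-negativity when $\psi \geq 0$ is obtained by testing the forward equation with $-u_-$: on $\{u<0\}$ one has $\abs{\nabla u} = \abs{\nabla u_-}$, the $-1$ source term contributes a favorable non-negative quantity $\int u_-$ to the resulting identity, Young absorbs the nonlinear term into $\nu \norm{\nabla u_-}_{L^2}^2$, and Grönwall combined with $u_-(0) = 0$ forces $u_- \equiv 0$. For the $L^\infty$ upper bound when $K \geq 0$ and $\psi \in H_0^1(\Omega) \cap L^\infty(\Omega)$, I would compare $\varphi$ with the time-independent super-solution $V(x) := \norm{\psi}_{L^\infty} + \Psi(x)$, where $\Psi \in H_0^1(\Omega)$ is the unique solution of $-\nu\Delta\Psi = 1$ in $\Omega$ (for which classical elliptic $L^\infty$ theory gives $\norm{\Psi}_{L^\infty} \leq C(\nu,\Omega)$, and the maximum principle gives $\Psi \geq 0$). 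Since $-\partial_t V - \nu \Delta V + K\abs{\nabla V} - 1 = K \abs{\nabla\Psi} \geq 0$ with $V(T) \geq \psi$ and $V \geq 0$ on $\partial\Omega$, the same testing scheme as in the uniqueness argument applied to $(V - \varphi)_-$ gives $\varphi \leq V$ a.e., hence the desired bound depending only on $\nu$, $\Omega$, and $\norm{\psi}_{L^\infty}$.

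For the higher regularity when $\psi \in H_0^1(\Omega)$, I would work on a Galerkin approximation $u_n$ built from the eigenfunctions of $-\Delta$ with homogeneous Dirichlet condition and test the projected equation with $\partial_t u_n$: Young's inequality controls $\int \tilde K \abs{\nabla u_n} \partial_t u_n$ by $\tfrac{1}{4}\norm{\partial_t u_n}_{L^2}^2 + \norm{\tilde K}_{L^\infty}^2 \norm{\nabla u_n}_{L^2}^2$, and Grönwall starting from $\norm{\nabla\psi}_{L^2}^2$ yields uniform bounds on $\partial_t u_n$ in $L^2((0,T)\times\Omega)$ and on $\nabla u_n$ in $L^\infty(0,T; L^2(\Omega))$. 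Rewriting the equation as $-\nu\Delta u_n = 1 - \tilde K \abs{\nabla u_n} - \partial_t u_n \in L^2((0,T) \times \Omega)$ and invoking elliptic $H^2$ regularity slice-wise in $t$ (using the $C^2$ boundary) gives a uniform $L^2(0,T; H^2(\Omega))$ bound on $u_n$; Aubin--Lions then yields strong $L^2(H_0^1)$ convergence along a subsequence, which lets us pass to the limit in the nonlinearity and identify the limit as the unique weak solution of the problem, which therefore inherits all the above regularity. The continuity $u \in C^0([0,T]; H_0^1(\Omega))$ then follows from $u \in L^2(H^2) \cap H^1(L^2)$ via a classical embedding. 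The main subtlety I expect lies in this last step: the test function $\partial_t u_n$ is only admissible at the Galerkin level, and passing to the limit in $\tilde K \abs{\nabla u_n}$ crucially requires the $L^2$ strong convergence of $\nabla u_n$, which in turn is only available once the $H^2$ bound has been established from the $\partial_t u_n$ test.
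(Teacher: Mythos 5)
Your proposal is correct but follows a genuinely different, more self-contained route than the paper. Where the paper establishes existence, uniqueness, and the $L^\infty(L^2)\cap L^2(H^1_0)$ bounds by citing \cite[Theorem~2.1, Lemma~4.1]{Porzio1999Existence} and \cite[Theorem~2.4]{Feo2014Remark}, you prove them directly via a Banach contraction in the Bielecki-weighted $L^2(H^1_0)$ norm (the contraction estimate is correct: testing the difference of two iterates with $e^{-\lambda t}v$ and using $\bigl||\nabla w_1|-|\nabla w_2|\bigr|\le|\nabla(w_1-w_2)|$ gives a Lipschitz constant of order $\norm{K}_\infty^2/(\lambda\nu)$, which is $<1$ for $\lambda$ large). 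For the sign $\varphi\ge 0$ when $\psi\ge 0$, the paper invokes \cite[Theorem~1]{Aronson1967Local}, whereas you test with the truncation $u_-$ and close with Young and Grönwall; the key observations ($u_-\in H^1_0$ since $u=0$ on $\partial\Omega$, $|\nabla u|=|\nabla u_-|$ on $\{u<0\}$, and the source term $-1$ produces a sign-favorable contribution) are all sound. For the $L^\infty$ upper bound under $K\ge 0$ the two approaches coincide in spirit: both compare against $\norm{\psi}_{L^\infty}+\Phi$ with $\Phi$ the torsion function, but the paper appeals to a comparison principle \cite[Theorem~9.1]{Lieberman1996Second} while you run the same truncation energy argument on $(V-\varphi)_-$ (note that $(V-\varphi)_-\in H^1_0$ because $V-\varphi=\norm{\psi}_{L^\infty}\ge 0$ on $\partial\Omega$, which you implicitly use and is essential for the integration by parts). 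For the higher regularity when $\psi\in H^1_0$, the paper bootstraps by viewing $\varphi$ as the solution of a \emph{linear} backward heat equation with the already-known $L^2$ source $1-K|\nabla\varphi|$ and invokes classical linear parabolic regularity; you instead rebuild the solution by Galerkin, test with $\partial_t u_n$, and recover $H^2$ through elliptic regularity slice-wise, which is more work but makes the argument self-contained and correctly identifies the subtlety that $\partial_t u_n$ is only an admissible test function at the finite-dimensional level and that strong $L^2(H^1)$ convergence (via Aubin--Lions from the $L^2(H^2)\cap H^1(L^2)$ bounds, plus uniqueness to identify the limit with the original weak solution) is needed to pass to the limit in the nonlinearity. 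What the paper's route buys is brevity and direct reliance on established theorems; what yours buys is transparency and independence from the cited references. Two small remarks: your phrasing that the source $-1$ ``contributes a favorable non-negative quantity $\int u_-$'' is imprecise (it contributes $-\int u_-\le 0$ to the growth side, which is what makes it favorable), though the mechanism you describe is right; and your use of the symbol $\Psi$ for the torsion function collides with the paper's later use of $\Psi$ for the stationary Hamilton--Jacobi--Bellman solution, so a different symbol would be advisable in context.
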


The results stated in Proposition~\ref{PropHJB} are classical and follow from more general results for nonlinear pseudo-monotone operators. Similarly to Proposition~\ref{PropFP}, we explain below how they can be retrieved from the relevant literature.

\begin{proof}
Existence of a weak solution $\varphi$ for $\psi \in L^2(\Omega)$ follows from \cite[Theorem~2.1]{Porzio1999Existence} and the corresponding bounds on the norms of $\varphi$ are a consequence of \cite[Lemma~4.1]{Porzio1999Existence}, whereas uniqueness follows from \cite[Theorem~2.4]{Feo2014Remark}. 

The positivity of $\varphi$ when $\psi \geq 0$ is classical for smooth solutions and can be obtained by an easy application of the maximum principle for parabolic equations. For solutions of HJB obtained as value functions of a stochastic control problem, the result is also straightforward, as the quantity which is minimized is positive. In our context of weak solutions, it can be deduced by applying, for instance, \cite[Theorem~1]{Aronson1967Local} to $-\varphi$, after changing time orientation and paying attention to the observation at the end of the proof (page~98) that the inequality is enough (indeed, the source term $1$ in the HJB equation has the good sign to preserve positivity).

The upper bound on $\varphi$ under the positivity assumption on $\psi$ and $K$ and the fact that $\psi \in H_0^1(\Omega) \cap L^\infty(\Omega)$ can be obtained by applying a parabolic comparison principle (see \cite[Theorem~9.1]{Lieberman1996Second} for the smooth case) to $\varphi$ and $\Phi + \norm{\psi}_{L^\infty}$, where $\Phi$ is the solution of the torsion equation $-\nu \Delta \Phi = 1$ in $\Omega$ with Dirichlet boundary conditions.

Finally, higher regularity of $\varphi$ when $\psi \in H_0^1(\Omega)$ can be obtained in a straightforward manner by noticing that $-\partial_t \varphi - \nu \Delta\varphi = 1 - K \abs{\nabla\varphi}$, i.e., $\varphi$ satisfies a linear backwards heat equation in $\Omega$ with source term $1 - K \abs{\nabla\varphi} \in L^2((0, T) \times \Omega)$. The conclusions then follow from classical improved regularity results for heat equations (such as \cite[Section~7.1, Theorem~5]{Evans1998Partial} and \cite[Chapter~III, \S~6, Equation~(6.10) and Theorem~6.1]{Ladyzhenskaja1968Linear}).
\end{proof}

We next state, for future reference, a standard parabolic comparison principle for \eqref{HJBEqn} (see, e.g., \cite[Corollary~2.2]{Feo2014Remark}).

\begin{proposition}\label{prop cp}
Let $\varphi_1,\varphi_2$ be two solutions of \eqref{HJBEqn} with $T<+\infty$, with final data such that $\varphi_1(T,\cdot)\geq \varphi_2(T,\cdot)$. Then
$$
\varphi_1\geq \varphi_2 \quad \text{ on } \ (0,T)\times\O.
$$
\end{proposition}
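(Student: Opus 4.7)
The plan is to set $w := \varphi_2 - \varphi_1$ and aim to prove $w \leq 0$ on $(0,T) \times \Omega$. Subtracting the weak formulations for $\varphi_1$ and $\varphi_2$, the difference $w$ weakly satisfies
$$-\partial_t w - \nu \Delta w + K\bigl(\abs{\nabla\varphi_2} - \abs{\nabla\varphi_1}\bigr) = 0$$
with $w(T,\cdot) = \varphi_2(T,\cdot) - \varphi_1(T,\cdot) \leq 0$ in $\Omega$ and $w = 0$ on $(0,T) \times \partial\Omega$. The key observation is the reverse triangle inequality
$$\bigl|\abs{\nabla\varphi_2} - \abs{\nabla\varphi_1}\bigr| \leq |\nabla\varphi_2 - \nabla\varphi_1| = |\nabla w|,$$
so that the nonlinear coupling reduces to a first-order term bounded in modulus by $\norm{K}_{L^\infty}|\nabla w|$.

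Next I would perform an energy estimate by testing against $w^+ \in L^2((0,T);H^1_0(\Omega))$. From $\varphi_i \in L^2(H_0^1)$ together with the equation and the boundedness of $K$ one obtains $\partial_t w \in L^2((0,T); H^{-1}(\Omega))$, hence $w \in C^0([0,T]; L^2(\Omega))$ and the map $t \mapsto \int_\Omega (w^+(t))^2 \diff x$ is absolutely continuous. A standard chain rule then yields
$$-\frac{1}{2} \frac{d}{dt} \int_\Omega (w^+)^2 \diff x + \nu \int_\Omega |\nabla w^+|^2 \diff x \leq \norm{K}_{L^\infty} \int_\Omega |\nabla w^+|\, w^+ \diff x.$$
Young's inequality absorbs $\nu/2$ of the gradient term into the left-hand side, so that, setting $f(t) := \int_\Omega (w^+(t))^2 \diff x$, one gets $-f'(t) \leq C f(t)$ with $C = \norm{K}_{L^\infty}^2/\nu$. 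Because $f(T) = 0$, a backward Gronwall argument (noting that $e^{Ct}f(t)$ is nondecreasing) forces $f \equiv 0$, whence $w^+ \equiv 0$, that is, $\varphi_1 \geq \varphi_2$.

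The only delicate step is to legitimize $w^+$ as a test function under the weak regularity provided by Definition~\ref{DefiWeakSolHJB}, since Proposition~\ref{PropHJB} only guarantees the higher regularity $\varphi \in C^0([0,T]; H_0^1(\Omega)) \cap L^2((0,T); H^2(\Omega))$ under the stronger assumption $\psi \in H_0^1(\Omega)$. The standard fix is either (i) to regularize the data $\psi_i$ and $K$, apply the estimate above in the smooth setting, and pass to the limit using the continuous dependence provided by Proposition~\ref{PropHJB}, or (ii) to invoke the Lions--Magenes type integration-by-parts formula valid whenever $w \in L^2(H_0^1)$ with $\partial_t w \in L^2(H^{-1})$. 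Either approach yields the result, and as the statement indicates, the full argument is already recorded in \cite[Corollary~2.2]{Feo2014Remark}, to which one may simply refer.
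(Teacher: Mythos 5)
The paper does not prove Proposition~\ref{prop cp}; it simply states it and cites \cite[Corollary~2.2]{Feo2014Remark}. Your sketch supplies the standard energy-method proof that such a reference would contain, and the argument is correct. The reduction to the difference $w=\varphi_2-\varphi_1$, the reverse triangle inequality bounding the nonlinear term $K(\abs{\nabla\varphi_2}-\abs{\nabla\varphi_1})$ by $\norm{K}_{L^\infty}\abs{\nabla w}$, the energy estimate obtained by testing against $w^+$, Young's inequality, and the backward Gronwall step (noting $e^{Ct}\int_\Omega (w^+)^2$ is nondecreasing with value $0$ at $t=T$) are all sound. You also correctly isolate the only genuine technical point, namely justifying $w^+$ as a test function under the regularity of Definition~\ref{DefiWeakSolHJB}; here it is worth noting that $\partial_t w\in L^2((0,T);H^{-1}(\Omega))$ follows directly from the equation (since $\nu\Delta\varphi_i\in L^2(H^{-1})$ and $K\abs{\nabla\varphi_i}-1\in L^2(L^2)$) and $w\in C^0([0,T];L^2(\Omega))$ is already given by Proposition~\ref{PropHJB}, so your option~(ii), the Lions--Magenes chain rule for the positive part, applies without any further regularization. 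In short, your proof is a valid self-contained derivation of a result that the paper only quotes.
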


\section{The MFG system with a finite time horizon}
\label{SecMFGFinite}

We now consider the MFG system with a finite time horizon \eqref{MFGSystFiniteHorizon}. One of the difficulties in the analysis of \eqref{MFGSystFiniteHorizon} is that the velocity field in the continuity equation depends on $\frac{\nabla\varphi}{\abs{\nabla\varphi}}$, which is defined only when $\nabla\varphi \neq 0$. In order to handle this difficulty, we make use of the following definition of weak solution.

\begin{definition}
\label{DefiWeakSolMFG}
Let $\nu > 0$, $T \in (0, +\infty)$, $\kappa: \mathbb R \to (0, +\infty)$ be continuous and bounded, $\rho_0 \in L^1(\Omega)$, and $\psi \in L^2(\Omega)$. We say that $(\rho, \varphi) \in L^1((0, T)\times\Omega)\times L^2((0, T); H_0^1(\Omega))$ is a \emph{weak solution} of \eqref{MFGSystFiniteHorizon} with initial condition $\rho_0$ and final condition $\psi$ if there exists $V \in L^\infty((0, T) \times \Omega; \mathbb R^d)$ such that $\abs{V(t, x)} \leq \kappa(\rho(t, x))$ and $V(t, x) \cdot \nabla\varphi(t, x) = - \kappa(\rho(t, x)) \abs*{\nabla\varphi(t, x)}$ a.e.\ on $(0, T) \times \Omega$ and 
such that $\rho$ is a solution of the Fokker--Planck equation \eqref{FPEqn} with initial datum $\rho_0$ and vector field $V$ on $[0,T]\times\Omega$ in the sense of Definition~\ref{DefiWeakSolFP}, and $\varphi$ is a solution of the Hamilton--Jacobi--Bellman equation \eqref{HJBEqn} with final datum $\psi$ and  $K=\kappa(\rho)$ in the sense of Definition~\ref{DefiWeakSolHJB} on the same domain.
\end{definition}

\begin{remark}
If $(\rho, \varphi)$ is a weak solution of \eqref{MFGSystFiniteHorizon} and $V$ is any function satisfying the properties stated in Definition~\ref{DefiWeakSolMFG}, then we have  $V(t, x) = -\kappa(\rho(t, x)) \frac{\nabla\varphi(t, x)}{\abs{\nabla\varphi(t, x)}}$ wherever $\nabla\varphi(t, x) \neq 0$. The introduction of the function $V$ in Definition~\ref{DefiWeakSolMFG} has the advantages of providing a meaning to the first equation of \eqref{MFGSystFiniteHorizon} and handling its velocity field even when $\nabla\varphi(t, x) = 0$, which might a priori happen in a set of positive measure.
\end{remark}

The main result of this section is the following.

\begin{theorem}
\label{MainTheoFixedT}
Let $\nu > 0$, $T \in (0, +\infty)$, $\kappa: \mathbb R \to (0, +\infty)$ be continuous and bounded, $\rho_0 \in L^1(\Omega)$, and $\psi \in H_0^1(\Omega)$. Then there exists a weak solution $(\rho, \varphi)$ of \eqref{MFGSystFiniteHorizon} with initial condition $\rho_0$ and final condition $\psi$.
\end{theorem}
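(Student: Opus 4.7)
The plan is to apply Kakutani's fixed-point theorem in its Fan--Glicksberg form (for multi-valued upper semicontinuous maps on a compact convex subset of a locally convex Hausdorff space). A multi-valued formulation is needed because, as emphasized in Definition~\ref{DefiWeakSolMFG}, the velocity $V$ is not uniquely determined by $\nabla\varphi$ on $\{\nabla\varphi=0\}$. To preserve convexity of values despite the nonlinear product $\tilde\rho\, V$ in the Fokker--Planck equation, I would take the fixed-point variable to be the \emph{pair} $(\rho,V)$ rather than $\rho$ alone. Concretely, set $M=\norm{\kappa}_{L^\infty}$, fix exponents $r\in(1,(d+2)/d)$ and $q\in(1,(d+2)/(d+1))$, and let
\begin{align*}
\mathcal K &= \big\{\rho\in C^0([0,T];L^1(\O)): \rho\geq 0,\ \norm{\rho_t}_{L^1}\leq\norm{\rho_0}_{L^1}, \\
&\qquad \norm{\rho}_{L^r((0,T)\times\O)}+\norm{\nabla\rho}_{L^q}+\norm{\partial_t\rho}_{L^q((0,T);W^{-1,q}(\O))}\leq C_0\big\},
\end{align*}
where $C_0$ is the constant of Proposition~\ref{PropFP} valid for any drift bounded by $M$ and initial datum $\rho_0$, and let $\mathcal V_M=\{V\in L^\infty((0,T)\times\O;\R^d):\norm{V}_{L^\infty}\leq M\}$. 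Equipped with the product of the strong $L^1$ topology on $\mathcal K$ and the weak-$*$ $L^\infty$ topology on $\mathcal V_M$, the set $\mathcal K$ is compact by Aubin--Lions and $\mathcal V_M$ is compact by Banach--Alaoglu, so $\mathcal K\times\mathcal V_M$ is a convex compact subset of a locally convex Hausdorff space.

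Define the multi-valued map $\Phi:\mathcal K\times\mathcal V_M\rightrightarrows\mathcal K\times\mathcal V_M$ by
$$\Phi(\rho,V)=\{\tilde\rho_V\}\times\mathcal V(\rho),$$
where $\tilde\rho_V$ is the unique weak solution of \eqref{FPEqn} with drift $V$ and initial datum $\rho_0$ furnished by Proposition~\ref{PropFP}, and $\mathcal V(\rho)=\{V'\in\mathcal V_M:\abs{V'}\leq\kappa(\rho)\text{ and }V'\cdot\nabla\varphi=-\kappa(\rho)\abs{\nabla\varphi}\text{ a.e.}\}$, with $\varphi$ the unique HJB solution associated to $\rho$ by Proposition~\ref{PropHJB}. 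A fixed point $(\rho,V)\in\Phi(\rho,V)$ is exactly a weak solution in the sense of Definition~\ref{DefiWeakSolMFG}. The set $\mathcal V(\rho)$ is nonempty (for instance, $V'=-\kappa(\rho)\nabla\varphi/\abs{\nabla\varphi}$ on $\{\nabla\varphi\neq 0\}$ and $V'=0$ elsewhere) and convex, so $\Phi$ has nonempty convex closed values.

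The main obstacle is the upper semicontinuity of $\Phi$: given $(\rho_n,V_n)\to(\rho,V)$ in the product topology and $(\tilde\rho_n,V'_n)\in\Phi(\rho_n,V_n)$ converging to some $(\tilde\rho,V')$, one must show $(\tilde\rho,V')\in\Phi(\rho,V)$. Passing to the limit in the weak formulation of \eqref{FPEqn} is routine, since for any smooth test function $\eta$ the product $\tilde\rho_n\nabla\eta$ converges strongly in $L^1$ and pairs with the weak-$*$ convergence of $V_n$. The crux is obtaining \emph{strong} $L^2((0,T)\times\O)$ convergence of $\nabla\varphi_n\to\nabla\varphi$, which is what is needed to pass to the limit in the nonlinear constraint defining $\mathcal V(\rho_n)$. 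This is where the last part of Proposition~\ref{PropHJB} is crucial: since $\psi\in H_0^1(\O)$, it provides uniform bounds on $\varphi_n$ in $C^0([0,T];H_0^1(\O))\cap L^2((0,T);H^2(\O))$ and on $\partial_t\varphi_n$ in $L^2((0,T)\times\O)$, whence Aubin--Lions yields strong compactness of $\nabla\varphi_n$ in $L^2$. The continuity and boundedness of $\kappa$ together with dominated convergence give $\kappa(\rho_n)\to\kappa(\rho)$ in every $L^p$, $p<\infty$, and uniqueness in Proposition~\ref{PropHJB} identifies the HJB limit without subsequence extraction. Then $V'_n\cdot\nabla\varphi_n=-\kappa(\rho_n)\abs{\nabla\varphi_n}$ passes to the limit (right-hand side converges strongly in $L^2$, left-hand side as a weak-$*$-times-strong product), and $\abs{V'}\leq\kappa(\rho)$ survives weak-$*$ lower semicontinuity. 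Hence $V'\in\mathcal V(\rho)$, so $(\tilde\rho,V')\in\Phi(\rho,V)$, and Kakutani's theorem delivers the desired fixed point.
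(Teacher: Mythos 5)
Your proof follows essentially the same strategy as the paper: Kakutani's fixed-point theorem on a compact convex subset of a locally convex space, with the same compactness tools (Aubin--Lions applied to both the Fokker--Planck and the Hamilton--Jacobi--Bellman solution maps) and the same limit-passing arguments for the nonlinear terms (strong $L^2((0,T);H_0^1)$ convergence of $\varphi_n$, which crucially uses $\psi\in H_0^1(\O)$, combined with strong $L^1$ convergence of $\rho_n$ to control $\kappa(\rho_n)$ and the weak-$*$-times-strong pairing for $V'_n\cdot\nabla\varphi_n$). The one structural difference is that you take the pair $(\rho,V)$ as the fixed-point variable, whereas the paper fixes only the velocity $V$ on the $L^\infty$ ball $\mathcal B$ and computes $\rho=\mathcal S_{\mathrm{FP}}(V)$, $\varphi=\mathcal S_{\mathrm{HJB}}(\kappa\circ\rho)$ inside the set-valued map $\mathcal V$. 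Both choices resolve the convexity obstruction you correctly identify; the paper's is lighter because compactness and convexity of $\mathcal B$ come for free from Banach--Alaoglu, with no separate verification needed.

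That last point matters because your definition of $\mathcal K$ contains a small gap: including the requirement $\rho\in C^0([0,T];L^1(\O))$ makes it unclear that $\mathcal K$ is closed (hence compact) in the strong $L^1((0,T)\times\O)$ topology. The stated $L^q(W^{1,q})$ and $L^q(W^{-1,q})$ bounds involve the same (non-dual) exponent $q<(d+2)/(d+1)$, so they do not yield a Lions--Magenes-type embedding into $C^0([0,T];L^1(\O))$, and an $L^1$ limit of a sequence in $\mathcal K$ need not remain in $\mathcal K$. The repair is easy---drop the $C^0$ requirement and read the per-time $L^1$ bound for a.e.\ $t$, or replace $\mathcal K$ by the closed convex hull of $\mathcal S_{\mathrm{FP}}(\mathcal V_M)$ in $L^1((0,T)\times\O)$, which is compact by Aubin--Lions together with Mazur's theorem---but as written the compactness claim is not established, which is precisely the kind of bookkeeping the paper's single-variable formulation avoids.
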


The proof of Theorem~\ref{MainTheoFixedT} relies on a fixed-point argument on the velocity field $V$ of the Fokker--Planck equation in \eqref{MFGSystFiniteHorizon}. Before turning to the proof, we need some continuity results on solutions of \eqref{FPEqn} with respect to the velocity field $V$ and on solutions of \eqref{HJBEqn} with respect to the function $K$, which we state and prove now.

\begin{proposition}
\label{PropContinuityFP}
Let $\nu > 0$ and $\rho_0 \in L^1(\Omega)$. Given $V \in L^\infty((0, T) \times \Omega; \mathbb R^d)$, let $(V_n)_{n \in \mathbb N}$ be a sequence in $L^\infty((0, T) \times \Omega; \mathbb R^d)$ such that $V_n \xrightharpoonup{\ast} V$ as $n \to \infty$. For $n \in \mathbb N$, let $\rho_n$ (resp.\ $\rho$) be the unique weak solution of \eqref{FPEqn} in $L^1((0, T)\times\Omega)$ with velocity field $V_n$ (resp.\ $V$). Then $\rho_n \to \rho$ in $L^1((0, T) \times \Omega)$ as $n \to \infty$.
\end{proposition}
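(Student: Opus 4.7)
The plan is to combine the uniform estimates of Proposition~\ref{PropFP} with an Aubin--Lions compactness argument, pass to the limit in the weak formulation \eqref{FPWeak}, and conclude via the uniqueness of weak solutions stated in Proposition~\ref{PropFP}.

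Since $V_n \xrightharpoonup{\ast} V$ in $L^\infty((0,T)\times\Omega;\R^d)$, the Banach--Steinhaus theorem gives that $(V_n)$ is uniformly bounded in $L^\infty$. By Proposition~\ref{PropFP}, the associated solutions $(\rho_n)$ enjoy uniform bounds depending only on $\norm{\rho_0}_{L^1}$ and this $L^\infty$ bound: fixing $q\in (1,(d+2)/(d+1))$ and $r\in(q,(d+2)/d)$, $(\rho_n)$ is bounded in $L^r((0,T)\times\Omega)$, $(\nabla\rho_n)$ in $L^q((0,T)\times\Omega)$, and $(\partial_t\rho_n)$ in $L^q((0,T);W^{-1,q}(\Omega))$. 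Since the zero Dirichlet trace is preserved, $(\rho_n)$ is bounded in $L^q((0,T);W^{1,q}_0(\Omega))$.

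Rellich--Kondrachov gives a compact embedding $W^{1,q}_0(\Omega) \Subset L^q(\Omega)$, and $L^q(\Omega)\hookrightarrow W^{-1,q}(\Omega)$ is continuous, so the Aubin--Lions lemma yields relative compactness of $(\rho_n)$ in $L^q((0,T)\times\Omega)$. Up to a subsequence, $\rho_n\to \rho^*$ strongly in $L^q$; the uniform $L^r$ bound with $r>q$ makes the sequence equi-integrable, so Vitali's theorem upgrades the convergence to strong convergence in $L^1((0,T)\times\Omega)$. I then pass to the limit in \eqref{FPWeak} written for $(\rho_n, V_n)$: the linear terms $-\int\!\!\int\rho_n\partial_t\eta$ and $-\int\!\!\int\nu\rho_n\Delta\eta$ converge by strong $L^1$ convergence of $\rho_n$ and boundedness of $\partial_t\eta, \Delta\eta$; for the coupling term, $\rho_n\nabla\eta \to \rho^*\nabla\eta$ strongly in $L^1((0,T)\times\Omega;\R^d)$ (as $\nabla\eta\in L^\infty$), while $V_n\xrightharpoonup{\ast} V$ in $L^\infty$, so the standard weak-$\ast$/strong pairing gives $\int\!\!\int\rho_n V_n\cdot\nabla\eta \to \int\!\!\int\rho^* V\cdot\nabla\eta$. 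The right-hand side of \eqref{FPWeak} is $n$-independent, hence $\rho^*$ is a weak solution of \eqref{FPEqn} with velocity $V$ and datum $\rho_0$, so by uniqueness $\rho^*=\rho$. A subsequence-of-subsequence argument then forces the whole sequence to converge.

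The only delicate point is the coupling term $\rho_n V_n\cdot\nabla\eta$: because $V_n$ is only weakly-$\ast$ convergent, one really needs strong $L^p$ compactness of $(\rho_n)$ for some $p>1$, and this must be extracted from the rather mild estimates forced by the assumption $\rho_0\in L^1$. That is precisely what the combination of Proposition~\ref{PropFP} and Aubin--Lions supplies.
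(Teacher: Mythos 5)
Your proof is correct and follows essentially the same route as the paper: uniform $L^\infty$ bound on $V_n$, the estimates from Proposition~\ref{PropFP}, Aubin--Lions compactness, strong/weak-$\ast$ pairing for the drift term, and uniqueness to identify the limit. One small redundancy: once you have strong convergence in $L^q((0,T)\times\Omega)$ with $q>1$ on a finite-measure set, $L^1$ convergence follows directly by Hölder, so the Vitali/equi-integrability step is not needed.
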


\begin{proof}
Since $(V_n)_{n \in \mathbb N}$ converges weakly-$\ast$ to $V$ in $L^\infty$, there exists a constant $M > 0$ such that $\norm*{V_n}_{L^\infty((0, T) \times \Omega)} \leq M$ for every $n \in \mathbb N$ and thus, by Proposition~\ref{PropFP}, there exists $C > 0$ depending only on $d$, $\nu$, $M$, and $\norm*{\rho_0}_{L^1(\Omega)}$ such that, for every $n \in \mathbb N$,
\begin{equation}
\label{BoundL2H01}
\norm*{\rho_n}_{L^\infty((0, T);L^1(\Omega))} + \norm*{\rho_n}_{L^q((0, T); W^{1,q}(\Omega))}+\norm*{\partial_t \rho_n}_{L^q((0, T);W^{-1,q}(\Omega))}  \leq C.
\end{equation}
It follows from \eqref{BoundL2H01} and Aubin--Lions Lemma (see, e.g., \cite[Corollary 4]{Simon1987Compact}) that $(\rho_n)_{n \in \mathbb N}$ is relatively compact in $L^1((0, T) \times \Omega)$. Let $\rho^\ast \in L^1((0, T) \times \Omega)$ be a limit point of $(\rho_n)_{n \in \mathbb N}$ and $(\rho_{n_k})_{k \in \mathbb N}$ a subsequence of $(\rho_n)_{n \in \mathbb N}$ converging to $\rho^\ast$ in $L^1((0, T) \times \Omega)$.

The weak convergence of $V_n$ in $L^\infty$ together with the strong convergence of $\rho_n$ in $L^1$ allow to pass to the limit the drift term $\nabla\cdot(\rho_n V_n)$ in the equation and we then easily obtain that $\rho^\ast$ is a weak solution of \eqref{FPEqn}. By the uniqueness of such solution from Proposition~\ref{PropFP}, one concludes $\rho^\ast = \rho$. In particular, $\rho$ is the unique limit point of the relatively compact sequence $(\rho_n)_{n \in \mathbb N}$ in $L^1((0, T) \times \Omega)$, which yields the result.
\end{proof}

\begin{proposition}
\label{PropContinuityHJB}
Let $\nu > 0$ and $\psi \in H_0^1(\Omega)$. Given $K \in L^\infty((0, T) \times \Omega)$, let $(K_n)_{n \in \mathbb N}$ be a sequence in $L^\infty((0, T) \times \Omega)$ such that $K_n \xrightharpoonup{\ast} K$ as $n \to \infty$. For $n \in \mathbb N$, let $\varphi_n$ (resp.\ $\varphi$) be the unique weak solution of \eqref{HJBEqn} in $L^\infty((0, T);L^2(\Omega)) \cap L^2((0, T); H_0^1(\Omega))$ with $K_n$ (resp.\ $K$). Then $\varphi_n \to \varphi$ in $L^2((0, T); H_0^1(\Omega))$ as $n \to \infty$.
\end{proposition}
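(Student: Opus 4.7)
The plan is to follow the same strategy as in Proposition~\ref{PropContinuityFP}: extract uniform bounds, use an Aubin--Lions compactness argument, pass to the limit, and identify the limit via uniqueness. The extra subtlety compared to the Fokker--Planck case is that the Hamilton--Jacobi--Bellman equation is nonlinear in $\nabla\varphi$, so weak convergence of the gradients will not suffice to pass the term $K_n\abs{\nabla\varphi_n}$ to the limit. To obtain strong convergence of $\nabla\varphi_n$, I would rely on the improved regularity statement of Proposition~\ref{PropHJB}, which is available precisely because $\psi\in H^1_0(\Omega)$.

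First, since $K_n \xrightharpoonup{\ast} K$, the sequence $(K_n)_{n\in\N}$ is uniformly bounded in $L^\infty((0, T)\times\Omega)$. Combined with $\psi\in H^1_0(\Omega)$, the last part of Proposition~\ref{PropHJB} provides a constant $C>0$, independent of $n$, such that
$$\norm*{\varphi_n}_{C^0([0, T]; H_0^1(\Omega))} + \norm*{\varphi_n}_{L^2((0, T); H^2(\Omega))} + \norm*{\partial_t \varphi_n}_{L^2((0, T) \times \Omega)} \leq C.$$
Using the compact embedding $H^2(\Omega)\cap H^1_0(\Omega) \hookrightarrow\hookrightarrow H^1_0(\Omega)$ together with $H^1_0(\Omega) \hookrightarrow L^2(\Omega)$, Aubin--Lions (as in \cite[Corollary 4]{Simon1987Compact}) then yields relative compactness of $(\varphi_n)$ in $L^2((0, T); H_0^1(\Omega))$. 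I extract a subsequence $(\varphi_{n_k})$ converging strongly in this space to some $\varphi^\ast$, so that in particular $\nabla\varphi_{n_k}\to \nabla\varphi^\ast$ strongly in $L^2((0, T)\times\Omega;\R^d)$.

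The hard part is passing to the limit in the weak formulation \eqref{eq test}, specifically in the nonlinear term $\int K_{n_k}\abs{\nabla\varphi_{n_k}}\eta$. Here the key observation is that $\abs{\nabla\varphi_{n_k}}\eta \to \abs{\nabla\varphi^\ast}\eta$ strongly in $L^2((0, T)\times\Omega)$ (and hence in $L^1$ since $\eta$ is bounded), while $K_{n_k} \xrightharpoonup{\ast} K$ in $L^\infty$. Since the pairing of a weakly-$\ast$ convergent sequence in $L^\infty$ with a strongly convergent sequence in $L^1$ passes to the limit, this nonlinear term converges to $\int K\abs{\nabla\varphi^\ast}\eta$. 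The linear terms pass to the limit straightforwardly from the strong $L^2$ convergence of $\varphi_{n_k}$ and $\nabla\varphi_{n_k}$, and the constant term and the contribution of $\psi$ are independent of $k$. Therefore $\varphi^\ast$ is a weak solution of \eqref{HJBEqn} associated with $K$.

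Finally, by the uniqueness statement of Proposition~\ref{PropHJB}, $\varphi^\ast = \varphi$. A standard subsequence argument (every subsequence of $(\varphi_n)$ admits a sub-subsequence converging in $L^2((0, T); H_0^1(\Omega))$ to the unique limit $\varphi$) then upgrades the convergence to the whole sequence, yielding the claim. As indicated above, the principal obstacle is the nonlinearity in Step~3: without the $H^2$-in-space and $H^1$-in-time estimates afforded by $\psi\in H^1_0(\Omega)$, the Aubin--Lions step would not deliver strong convergence of the gradients, and the term $K_n\abs{\nabla\varphi_n}$ could not be identified in the limit.
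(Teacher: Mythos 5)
Your proposal is correct and follows essentially the same route as the paper: uniform bounds from Proposition~\ref{PropHJB} (available because $\psi\in H^1_0$), Aubin--Lions compactness in $L^2((0,T);H^1_0)$, passage to the limit in the weak formulation by pairing $K_{n_k}\xrightharpoonup{\ast}K$ in $L^\infty$ against the strongly convergent $\abs{\nabla\varphi_{n_k}}\eta$, and identification of the limit by uniqueness followed by the standard subsequence argument. Your additional remark about why the nonlinear term passes to the limit is exactly the point the paper flags as requiring care; the only omission is explicitly noting that $\varphi^\ast$ also inherits the $L^\infty((0,T);L^2)$ bound needed to qualify as a weak solution in the sense of Definition~\ref{DefiWeakSolHJB}, but this follows trivially from the uniform estimates you already quoted.
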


\begin{proof}
Again, there exists a constant $M > 0$ such that $\norm*{K_n}_{L^\infty((0, T) \times \Omega)} \leq M$ for every $n \in \mathbb N$ and thus, by Proposition~\ref{PropHJB}, there exists $C > 0$ depending only on $d$, $\nu$, $T$, $\Omega$, $M$, and $\norm*{\psi}_{H_0^1(\Omega)}$ such that
\begin{equation}
\label{EstimateVarphiN}
\norm*{\varphi_n}_{L^\infty((0, T); H_0^1(\Omega))} + \norm*{\varphi_n}_{L^2((0, T);H^2(\Omega))} + \norm*{\partial_t \varphi_n}_{L^2((0, T) \times \Omega)} \leq C.
\end{equation}
 Hence, by Aubin--Lions Lemma (see, e.g., \cite[Corollary~4]{Simon1987Compact}), $(\varphi_n)_{n \in \mathbb N}$ is relatively compact in $L^2((0, T); H_0^1(\Omega))$. Let $\varphi^\ast$ be a limit point of $(\varphi_n)_{n \in \mathbb N}$ and $(\varphi_{n_k})_{k \in \mathbb N}$ be a subsequence of $(\varphi_n)_{n \in \mathbb N}$ converging to $\varphi^\ast$ in $L^2((0, T); H_0^1(\Omega))$. By \eqref{EstimateVarphiN}, we also have $\varphi^\ast \in L^\infty((0, T); \allowbreak H_0^1(\Omega)))$.

Now, because of the non-linearity in the equation, we prefer to provide details on how to pass it to the limit.
For every $k$ and every $\eta \in H^1((0, T) \times \Omega)$ such that $\eta\restrict{[0, T] \times \partial\Omega} = 0$ and $\eta\restrict{\{0\} \times \Omega} = 0$, one has
\[
\int_0^T \int_\Omega \varphi_{n_k} \partial_t \eta + \nu \int_0^T \int_{\Omega} \nabla\varphi_{n_k} \cdot \nabla\eta + \int_0^T \int_\Omega (K_{n_k} \abs*{\nabla\varphi_{n_k}} - 1) \eta = \int_\Omega \psi(x) \eta(T, x) \diff x.
\]
Since $K_{n_k} \xrightharpoonup{\ast} K$ in $L^\infty((0, T) \times \Omega)$ and $\varphi_{n_k} \to \varphi^\ast$ in $L^2((0, T); H_0^1(\Omega))$, one obtains, letting $k \to \infty$, that
\[
\int_0^T \int_\Omega \varphi^\ast \partial_t \eta + \nu \int_0^T \int_{\Omega} \nabla\varphi^\ast \cdot \nabla\eta + \int_0^T \int_\Omega (K \abs*{\nabla\varphi^\ast} - 1) \eta = \int_\Omega \psi(x) \eta(T, x) \diff x.
\]
Hence $\varphi^\ast \in L^\infty((0, T);L^2(\Omega)) \cap L^2((0, T); H_0^1(\Omega))$ is a weak solution of \eqref{HJBEqn} and, by the uniqueness of solutions of \eqref{HJBEqn} from Proposition~\ref{PropHJB}, one deduces that $\varphi^\ast = \varphi$. Thus $\varphi$ is the unique limit point in $L^2((0, T); H_0^1(\Omega))$ of the relatively compact sequence $(\varphi_n)_{n \in \mathbb N}$, yielding the conclusion.
\end{proof}

We now recall the statement of Kakutani's fixed point theorem (see, e.g., \cite[\S~7, Theorem~8.6]{Granas2003Fixed}), which is used in the proof of Theorem~\ref{MainTheoFixedT}.

\begin{theorem}[Kakutani's fixed point theorem]
Let $\mathcal B$ be a compact convex subset of a locally convex topological vector space $E$, and let $\mathcal V$ be a set-valued map in $\mathcal B$, i.e., $\mathcal V$ associates, with each $b \in \mathcal B$, a set $\mathcal V(b) \subset \mathcal B$. Assume that $\mathcal V$ is upper semi-continuous and that, for every $b \in \mathcal B$, $\mathcal V(b)$ is non-empty, compact, and convex. Then $\mathcal V$ admits a fixed point in $\mathcal B$, i.e., there exists $b \in \mathcal B$ such that $b \in S(b)$.
\end{theorem}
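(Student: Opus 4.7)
The plan is to bootstrap Brouwer's theorem to a set-valued statement in finite dimensions, then extend to the general locally convex setting via a seminorm-indexed approximation of $\mathcal V$ by continuous single-valued maps whose image lies in a finite-dimensional convex polytope. The whole argument relies on the equivalent closed-graph characterization of upper semi-continuity for compact-valued maps into a Hausdorff space.

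To set up the approximation, fix a continuous seminorm $p$ on $E$ and $\e > 0$. By compactness of $\mathcal B$, choose $x_1, \dots, x_N \in \mathcal B$ whose $p$-balls of radius $\e$ cover $\mathcal B$, pick arbitrary $y_i \in \mathcal V(x_i)$, and take a continuous partition of unity $\{\phi_i\}$ on $\mathcal B$ subordinate to this cover. Define
\[
f_{p,\e}(x) = \sum_{i=1}^N \phi_i(x)\, y_i.
\]
By convexity of $\mathcal B$ this is a continuous map from $\mathcal B$ into the compact convex polytope $D = \operatorname{conv}\{y_1, \dots, y_N\} \subset \mathcal B$, hence it restricts to a continuous self-map of $D$. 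Since $D$ lies in a finite-dimensional affine subspace, Brouwer's fixed point theorem furnishes $b_{p,\e} \in D$ with $f_{p,\e}(b_{p,\e}) = b_{p,\e}$.

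Then I would pass to the limit. The fixed point identity reads
\[
b_{p,\e} = \sum_i \phi_i(b_{p,\e})\, y_i,
\]
where $\phi_i(b_{p,\e}) > 0$ forces $p(x_i - b_{p,\e}) < \e$. Indexing the approximations over the directed set of pairs (finite collection of continuous seminorms, positive tolerance), compactness of $\mathcal B$ extracts a subnet $b_\alpha \to b^\ast \in \mathcal B$. To conclude $b^\ast \in \mathcal V(b^\ast)$, suppose the contrary. Since $\mathcal V(b^\ast)$ is convex and compact and $E$ is Hausdorff locally convex, $\mathcal V(b^\ast)$ can be separated from $b^\ast$ by a convex open neighbourhood $W$ of $\mathcal V(b^\ast)$ determined by finitely many seminorms; upper semi-continuity provides an open $U \ni b^\ast$ with $\mathcal V(U) \subset W$. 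Along the subnet one eventually has $b_\alpha \in U$ and the approximation refined enough (in the relevant seminorms) that every $x_i$ active in the partition-of-unity sum at $b_\alpha$ lies in $U$. Hence each active $y_i \in \mathcal V(x_i) \subset W$ and by convexity of $W$ the sum $b_\alpha$ itself lies in $W$, contradicting $b_\alpha \to b^\ast \notin W$.

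The main obstacle is aligning the indexing of the approximations with the neighbourhood base of the limit point: the approximating finite-dimensional projections are seminorm-specific, while upper semi-continuity may need to be invoked in a neighbourhood built from different seminorms altogether. Letting the index set be \emph{finite families} of continuous seminorms rather than a single one, and refining the tolerance accordingly, resolves this and ensures the subnet can be made arbitrarily close to $b^\ast$ in every continuous seminorm. Everything else reduces to Brouwer applied on a polytope and the closed-graph description of upper semi-continuous compact-valued maps in a Hausdorff space.
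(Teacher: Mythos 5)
The paper does not prove this statement at all: it is quoted as a classical result, with a citation to Granas--Dugundji, and used as a black box in the proof of Theorem~\ref{MainTheoFixedT}. So your argument can only be judged on its own, and on its own it is essentially the standard approximation proof of the Kakutani--Fan--Glicksberg theorem: replace $\mathcal V$ by the partition-of-unity selection $f_{p,\varepsilon}$, apply Brouwer on the finite-dimensional polytope $\operatorname{conv}\{y_1,\dots,y_N\}\subset\mathcal B$, and pass to the limit along a subnet, using upper semi-continuity together with convexity and compactness of the values to show the limit is a fixed point. This route is correct, but three points should be tightened. First, you use that $E$ is Hausdorff (compactness of $\mathcal V(b^\ast)$ then gives closedness, which is what makes separation from $b^\ast$ possible); the statement as printed does not say it, so state it as the standing convention. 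Second, the final contradiction as written is too weak: since $W$ is open, $b^\ast\notin W$ does not by itself prevent $b_\alpha\in W$ along the subnet. You need strict separation: by Hahn--Banach choose a continuous linear functional $\ell$ and $c_1<c_2$ with $\ell(b^\ast)<c_1<c_2<\inf_{y\in\mathcal V(b^\ast)}\ell(y)$ and take $W=\{z\in E:\ell(z)>c_2\}$, so that $b_\alpha\to b^\ast$ eventually places $b_\alpha$ in $\{z:\ell(z)<c_1\}$, disjoint from $W$; equivalently take $W=\mathcal V(b^\ast)+V$ with $V$ a convex neighbourhood of $0$ chosen so that $(b^\ast+V)\cap W=\emptyset$. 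Third, the phrase ``the approximation refined enough in the relevant seminorms'' is exactly the cofinality of the subnet in your directed set of pairs (finite family of seminorms, tolerance): make explicit that the subnet is eventually beyond any prescribed index, so that the seminorms defining $U$ and the separating neighbourhood belong to the active family and the tolerance is small; then every $x_i$ with $\phi_i(b_\alpha)>0$ lies in $U$, each corresponding $y_i\in\mathcal V(x_i)\subset W$, and $b_\alpha\in W$ by convexity, giving the contradiction. With these standard repairs the proof is complete, and it is the same proof one finds behind the result the paper cites.
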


We are finally in position to provide the proof of Theorem~\ref{MainTheoFixedT}.

\begin{proof}[Proof of Theorem~\ref{MainTheoFixedT}]
Let $\kappa_0$ be an upper bound on $\kappa$. We endow the space $L^\infty((0, T) \times \Omega; \mathbb R^d)$ with its weak-$\ast$ topology and consider the ball of radius $\kappa_0$ given by
\[\mathcal B = \left\{V \in L^\infty((0, T) \times \Omega; \mathbb R^d) \suchthat \norm{V}_{L^\infty((0, T) \times \Omega; \mathbb R^d)} \leq \kappa_0\right\}.\]
Note that $\mathcal B$ is clearly convex and, by the Banach--Alaoglu theorem, $\mathcal B$ is a compact subset of $L^\infty((0, T) \times \Omega; \mathbb R^d)$.

Let $\mathcal S_{\mathrm{FP}}: L^\infty((0, T) \times \Omega; \mathbb R^d) \to L^1((0, T) \times \Omega)$ be the function that associates, with each $V \in L^\infty((0, T) \times \Omega; \mathbb R^d)$, the unique weak solution $\rho = \mathcal S_{\mathrm{FP}}(V) \in L^1((0, T) \times \Omega)$ of \eqref{FPEqn} with initial condition $\rho_0$. Note that, by Proposition~\ref{PropContinuityFP}, $\mathcal S_{\mathrm{FP}}$ is continuous with respect to the weak-$\ast$ topology of $L^\infty((0, T) \times \Omega; \mathbb R^d)$ and the strong topology of $L^1((0, T) \times \Omega)$. Similarly, we define $\mathcal S_{\mathrm{HJB}}: L^\infty((0, T) \times \Omega) \to L^2((0, T); H_0^1(\Omega))$ as the function that associates, with each $K \in L^\infty((0, T) \times \Omega)$, the unique weak solution $\varphi = \mathcal S_{\mathrm{HJB}}(K) \in L^2((0, T); H_0^1(\Omega))$ of \eqref{HJBEqn} with terminal condition $\psi$. Proposition~\ref{PropContinuityHJB} ensures that $\mathcal S_{\mathrm{HJB}}$ is continuous with respect to the weak-$\ast$ topology of $L^\infty((0, T) \times \Omega)$ and the strong topology of $L^2((0, T); H_0^1(\Omega))$.

We define the set-valued map $\mathcal V$ that, with each $V \in \mathcal B$, associates the set $\mathcal V(V) \subset \mathcal B$ given by
\begin{align*}
\mathcal V(V) = \Big\{\widetilde V \in \mathcal B \mathrel{}\Big\vert\mathrel{} & \abs[\big]{\widetilde V(t, x)} \leq \kappa(\rho(t, x)) \text{ for a.e.\ } (t, x) \in (0, T) \times \Omega, \\
 & \widetilde V(t, x) \cdot \nabla\varphi(t, x) = - \kappa(\rho(t, x)) \abs*{\nabla\varphi(t, x)} \text{ for a.e.\ } (t, x) \in (0, T) \times \Omega, \\
 & \text{where } \rho = \mathcal S_{\mathrm{FP}}(V) \text{ and } \varphi = \mathcal S_{\mathrm{HJB}}(\kappa \circ \rho)\Big\}.
\end{align*}
In order to prove the existence of a weak solution $(\rho, \varphi)$ of \eqref{MFGSystFiniteHorizon}, we first prove the existence of a fixed point of the set-valued map $\mathcal V$, i.e., of a $V \in L^\infty((0, T) \times \Omega; \mathbb R^d)$ such that $V \in \mathcal V(V)$. This is done by applying Kakutani's fixed point theorem to the set-valued map $\mathcal V$. To do so, we first need to verify some properties of $\mathcal V$ and its graph $\mathcal G$ defined by
\[
\mathcal G = \left\{(V, \widetilde V) \in \mathcal B \times \mathcal B \suchthat \widetilde V \in \mathcal V(V)\right\}.
\]

\begin{claim}
\label{ClaimNonemptyConvex}
For every $V \in L^\infty((0, T) \times \Omega; \mathbb R^d)$, the set $\mathcal V(V)$ is non-empty and convex.
\end{claim}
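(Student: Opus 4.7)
The plan is to handle the two properties separately, with non-emptiness being the substantive point and convexity essentially immediate.

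For non-emptiness, given $V \in \mathcal B$, set $\rho = \mathcal S_{\mathrm{FP}}(V)$ and $\varphi = \mathcal S_{\mathrm{HJB}}(\kappa\circ\rho)$. I would construct an explicit candidate $\widetilde V$ by the pointwise formula
\[
\widetilde V(t,x) = \begin{cases} -\kappa(\rho(t,x))\,\dfrac{\nabla\varphi(t,x)}{\abs{\nabla\varphi(t,x)}} & \text{if } \nabla\varphi(t,x)\neq 0,\\ 0 & \text{if } \nabla\varphi(t,x)= 0. \end{cases}
\]
The key checks are: (i) measurability, which follows from the fact that $\rho$ and $\varphi$ are measurable, $\kappa$ is continuous, $\nabla\varphi$ is measurable (since $\varphi\in L^2((0,T);H_0^1(\Omega))$), and the set $\{\nabla\varphi=0\}$ is measurable; (ii) the bound $\abs{\widetilde V}\leq \kappa(\rho)\leq \kappa_0$, which holds by construction everywhere (and gives $\widetilde V \in \mathcal B$); (iii) the alignment identity $\widetilde V\cdot\nabla\varphi = -\kappa(\rho)\abs{\nabla\varphi}$, which holds trivially on $\{\nabla\varphi=0\}$ (both sides vanish) and by direct computation on its complement. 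Hence $\widetilde V \in \mathcal V(V)$ and $\mathcal V(V)\neq\emptyset$.

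For convexity, let $\widetilde V_1,\widetilde V_2 \in \mathcal V(V)$ and $\lambda\in[0,1]$, and set $\widetilde V_\lambda = \lambda \widetilde V_1 + (1-\lambda)\widetilde V_2$. The inequality $\abs{\widetilde V_\lambda}\leq \kappa(\rho)$ follows from the triangle inequality and the bounds on $\widetilde V_1,\widetilde V_2$, while the affine identity
\[
\widetilde V_\lambda\cdot\nabla\varphi = \lambda\,\widetilde V_1\cdot\nabla\varphi + (1-\lambda)\,\widetilde V_2\cdot\nabla\varphi = -\kappa(\rho)\abs{\nabla\varphi}
\]
follows from linearity of the dot product and the fact that both $\widetilde V_1$ and $\widetilde V_2$ realise the right-hand side. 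Thus $\widetilde V_\lambda\in\mathcal V(V)$, proving convexity.

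There is no real obstacle in this claim; the only subtle point worth spelling out is the measurability of the map $(t,x)\mapsto \nabla\varphi(t,x)/\abs{\nabla\varphi(t,x)}$ on the open-in-measure set where $\nabla\varphi\neq 0$, which is standard since $\nabla\varphi$ is a measurable vector field and the normalisation is a Borel function off the origin.
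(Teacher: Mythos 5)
Your proof is correct and takes the same approach as the paper: the paper exhibits exactly the same explicit candidate $\widetilde V$ (normalised negative gradient where $\nabla\varphi\neq 0$, zero elsewhere) for non-emptiness and treats convexity as immediate. Your write-up simply spells out the measurability, bound, and alignment checks that the paper leaves as ``clearly satisfies'' and ``immediate to verify''.
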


\begin{proof}
It is immediate to verify that $\mathcal V(V)$ is convex. To prove that it is non-empty, let $V \in L^\infty((0, T) \times \Omega; \mathbb R^d)$, $\rho = \mathcal S_{\mathrm{FP}}(V)$, and $\varphi = \mathcal S_{\mathrm{HJB}}(\kappa \circ \rho)$. Then, the function $\widetilde V \in L^\infty((0, T) \times \Omega; \mathbb R^d)$ defined for a.e.\ $(t, x) \in (0, T) \times \Omega$ by
\[
\widetilde V(t, x) = \begin{dcases*}
-\kappa(\rho(t, x)) \frac{\nabla\varphi(t, x)}{\abs*{\nabla\varphi(t, x)}} & if $\nabla\varphi(t, x) \neq 0$, \\
0 & otherwise,
\end{dcases*}
\]
clearly satisfies $\widetilde V \in \mathcal V(V)$.
\end{proof}

\begin{claim}
\label{ClaimClosedGraph}
The graph $\mathcal G$ is a closed subset of $\mathcal B \times \mathcal B$.
\end{claim}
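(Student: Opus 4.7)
The plan is to take a sequence $(V_n,\widetilde V_n)$ in $\mathcal G$ with $V_n\xrightharpoonup{\ast} V$ and $\widetilde V_n\xrightharpoonup{\ast} \widetilde V$ in the weak-$\ast$ topology of $L^\infty((0,T)\times\Omega;\mathbb R^d)$, and show that the limit $(V,\widetilde V)$ still belongs to $\mathcal G$. Setting $\rho_n=\mathcal S_{\mathrm{FP}}(V_n)$ and $\varphi_n=\mathcal S_{\mathrm{HJB}}(\kappa\circ\rho_n)$, the first step is to apply Proposition~\ref{PropContinuityFP} to get $\rho_n\to\rho:=\mathcal S_{\mathrm{FP}}(V)$ strongly in $L^1((0,T)\times\Omega)$. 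Extracting a subsequence (not relabeled) along which $\rho_n\to\rho$ a.e., the continuity and boundedness of $\kappa$ give $\kappa(\rho_n)\to\kappa(\rho)$ a.e.\ and uniformly bounded, hence $\kappa(\rho_n)\xrightharpoonup{\ast}\kappa(\rho)$ in $L^\infty$. Proposition~\ref{PropContinuityHJB} then yields $\varphi_n\to\varphi:=\mathcal S_{\mathrm{HJB}}(\kappa\circ\rho)$ strongly in $L^2((0,T);H^1_0(\Omega))$.

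It remains to check the two conditions defining $\mathcal V(V)$. For the first, I would use that $|\widetilde V_n|\leq\kappa(\rho_n)$ a.e.\ translates, for every $\xi\in L^\infty((0,T)\times\Omega;\mathbb R^d)$ with $|\xi|\leq 1$ and every non-negative $\eta\in L^1((0,T)\times\Omega)$, into
\[
\int_0^T\!\!\int_\Omega \widetilde V_n\cdot\xi\,\eta \;\leq\; \int_0^T\!\!\int_\Omega \kappa(\rho_n)\,\eta.
\]
The left-hand side converges to $\int\widetilde V\cdot\xi\,\eta$ by weak-$\ast$ convergence of $\widetilde V_n$, while the right-hand side converges to $\int\kappa(\rho)\,\eta$ by dominated convergence. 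Taking the supremum over admissible $\xi$ gives $\int|\widetilde V|\eta\leq\int\kappa(\rho)\eta$ for all non-negative $\eta\in L^1$, hence $|\widetilde V|\leq\kappa(\rho)$ a.e.

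For the second condition, I would integrate the pointwise identity $\widetilde V_n\cdot\nabla\varphi_n=-\kappa(\rho_n)|\nabla\varphi_n|$ over $(0,T)\times\Omega$ and pass to the limit on both sides. The right-hand side converges to $-\int\kappa(\rho)|\nabla\varphi|$ by splitting $\kappa(\rho_n)|\nabla\varphi_n|-\kappa(\rho)|\nabla\varphi|=\kappa(\rho_n)(|\nabla\varphi_n|-|\nabla\varphi|)+(\kappa(\rho_n)-\kappa(\rho))|\nabla\varphi|$, using the uniform $L^\infty$ bound on $\kappa$ and the $L^2$ (hence $L^1$) strong convergence of $|\nabla\varphi_n|$ for the first term, and dominated convergence for the second. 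The left-hand side is handled by writing $\int\widetilde V_n\cdot\nabla\varphi_n=\int\widetilde V_n\cdot(\nabla\varphi_n-\nabla\varphi)+\int\widetilde V_n\cdot\nabla\varphi$: the first piece vanishes by the uniform $L^\infty$ bound on $\widetilde V_n$ and the $L^1$ strong convergence of $\nabla\varphi_n$, while the second converges to $\int\widetilde V\cdot\nabla\varphi$ by weak-$\ast$ convergence of $\widetilde V_n$ tested against the fixed $L^1$ function $\nabla\varphi$.

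Combining these two limits gives the integral identity $\int \widetilde V\cdot\nabla\varphi=-\int\kappa(\rho)|\nabla\varphi|$, and since the pointwise inequality $\widetilde V\cdot\nabla\varphi\geq -|\widetilde V||\nabla\varphi|\geq -\kappa(\rho)|\nabla\varphi|$ is already available from the first condition, the equality of integrals forces pointwise equality a.e. This proves $\widetilde V\in\mathcal V(V)$, and hence $\mathcal G$ is closed. The main technical point to be careful about is step two: the product $\widetilde V_n\cdot\nabla\varphi_n$ is weak-$\ast$ times weak, which is generally not continuous, and the argument succeeds only because Proposition~\ref{PropContinuityHJB} furnishes \emph{strong} $L^2$ convergence of the gradients.
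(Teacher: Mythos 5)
Your proof is correct and follows the same overall skeleton as the paper: apply Proposition~\ref{PropContinuityFP} to get $\rho_n\to\rho$ in $L^1$, extract a subsequence with a.e.\ convergence so that $\kappa(\rho_n)\to\kappa(\rho)$ a.e.\ and hence weakly-$\ast$ in $L^\infty$, apply Proposition~\ref{PropContinuityHJB} to get $\varphi_n\to\varphi$ strongly in $L^2((0,T);H^1_0(\Omega))$, and then pass to the limit in the two defining conditions of $\mathcal V(V)$. The one genuinely different ingredient is in the treatment of the directional condition. The paper tests $\widetilde V_n\cdot\nabla\varphi_n=-\kappa(\rho_n)\abs{\nabla\varphi_n}$ against an \emph{arbitrary} $v\in L^2((0,T)\times\Omega)$ and passes to the limit for each such $v$; since the limiting integral identity holds for all $v$, the a.e.\ equality follows immediately. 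You instead integrate once (effectively $v\equiv 1$), obtain the scalar identity $\int\widetilde V\cdot\nabla\varphi=-\int\kappa(\rho)\abs{\nabla\varphi}$, and close the argument by observing that the pointwise inequality $\widetilde V\cdot\nabla\varphi\geq-\abs{\widetilde V}\abs{\nabla\varphi}\geq-\kappa(\rho)\abs{\nabla\varphi}$ (available from the already-established first condition and Cauchy--Schwarz) together with equality of integrals forces a.e.\ equality. This is a valid and arguably slicker alternative, though it does make the order of the two verifications essential: the bound $\abs{\widetilde V}\leq\kappa(\rho)$ must be in hand before the integral argument can be closed, whereas the paper's family-of-test-functions route is order-independent. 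Your concluding remark that the strong $L^2$ convergence of $\nabla\varphi_n$ from Proposition~\ref{PropContinuityHJB} is what rescues the a priori ill-behaved weak-$\ast$-times-weak product is exactly the right observation and matches the role that proposition plays in the paper.
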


\begin{proof}
Let $(V_n, \widetilde V_n)_{n \in \mathbb N}$ be a sequence in $\mathcal G$ converging weakly-$\ast$ in $\mathcal B \times \mathcal B$ to a point $(V, \widetilde V)$. We want to prove $(V, \widetilde V) \in \mathcal G$, i.e., $\widetilde V \in \mathcal V(V)$.

Define, for $n \in \mathbb N$, the functions $\rho_n \in L^1((0, T) \times \Omega)$ and $\varphi_n \in L^2((0, T); H_0^1(\Omega))$ by $\rho_n = \mathcal S_{\mathrm{FP}}(V_n)$ and $\varphi_n = \mathcal S_{\mathrm{HJB}}(\kappa \circ \rho_n)$ and, similarly, let $\rho = \mathcal S_{\mathrm{FP}}(V)$ and $\varphi = \mathcal S_{\mathrm{HJB}}(\kappa \circ \rho)$. Since $\mathcal S_{\mathrm{FP}}: L^\infty((0, T) \times \Omega; \mathbb R^d) \to L^1((0, T) \times \Omega)$ is continuous with respect to the weak-$\ast$ topology of $L^\infty((0, T) \times \Omega; \mathbb R^d)$ and the strong topology of $L^1((0, T) \times \Omega)$, one deduces $\rho_n \to \rho$ in $L^1((0, T) \times \Omega)$ as $n \to \infty$. Hence, up to extracting subsequences (which we still denote using the same notation for simplicity), one has $\rho_n \to \rho$ a.e.\ in $(0, T) \times \Omega$. Since $\kappa$ is continuous, we deduce $\kappa \circ \rho_n \to \kappa \circ \rho$ a.e.\ in $(0, T) \times \Omega$, and it follows $\kappa \circ \rho_{n} \xrightharpoonup{\ast} \kappa \circ \rho$ in $L^\infty((0, T) \times \Omega)$. The continuity of $\mathcal S_{\mathrm{HJB}}: L^\infty((0, T) \times \Omega; \mathbb R^d) \to L^2((0, T); H_0^1(\Omega))$ with respect to the weak-$\ast$ topology of $L^\infty((0, T) \times \Omega; \mathbb R^d)$ and the strong topology of $L^2((0, T); H_0^1(\Omega))$ implies $\varphi_n \to \varphi$ in $L^2((0, T); H_0^1(\Omega))$ as $n \to \infty$.

From the weak convergence of $\widetilde V_n$ to $\widetilde V$, the convexity of the function $\abs{\cdot}$, and the (strong) convergence of $\kappa(\rho_n)$ to $\kappa(\rho)$, the inequality $\abs[\big]{\widetilde V_n} \leq \kappa(\rho_n)$ gives at the limit 
\begin{equation}
\label{TildeVInVV-FirstStep}
\abs[\big]{\widetilde V(t, x)} \leq \kappa(\rho(t, x)) \qquad \text{ for a.e.\ } (t, x) \in (0, T) \times \Omega.
\end{equation}

Since $\widetilde V_n \in \mathcal V(V_n)$ for every $n \in \mathbb N$, we have $\widetilde V_n(t, x) \cdot \nabla\varphi_n(t, x) = - \kappa(\rho_n(t, x)) \abs*{\nabla\varphi_n(t, x)}$ for a.e.\ $(t, x) \in (0, T) \times \Omega$. Then, for every $v \in L^2((0, T) \times \Omega)$, one has
\[
\int_0^T \int_\Omega \widetilde V_n(t, x) \cdot \nabla\varphi_n(t, x) v(t, x) \diff x \diff t = - \int_0^T \int_\Omega \kappa(\rho_n(t, x)) \abs*{\nabla\varphi_n(t, x)} v(t, x) \diff x \diff t.
\]
Recalling that, as $n \to \infty$, one has $\widetilde V_n \xrightharpoonup{\ast} \widetilde V$ in $L^\infty((0, T) \times \Omega)$, $\nabla\varphi_n \to \nabla\varphi$ in $L^2((0, T) \times \Omega)$, and $\kappa \circ \rho_n \xrightharpoonup{\ast} \kappa \circ \rho$ in $L^\infty((0, T) \times \Omega)$, we obtain, letting $n \to \infty$, that
\[
\int_0^T \int_\Omega \widetilde V(t, x) \cdot \nabla\varphi(t, x) v(t, x) \diff x \diff t = - \int_0^T \int_\Omega \kappa(\rho(t, x)) \abs*{\nabla\varphi(t, x)} v(t, x) \diff x \diff t
\]
for every $v \in L^2((0, T) \times \Omega)$, which implies that
\begin{equation}
\label{TildeVInVV-SecondStep}
\widetilde V(t, x) \cdot \nabla\varphi(t, x) = - \kappa(\rho(t, x)) \abs*{\nabla\varphi(t, x)} \qquad \text{ for a.e.\ } (t, x) \in (0, T) \times \Omega.
\end{equation}
Combining \eqref{TildeVInVV-FirstStep} and \eqref{TildeVInVV-SecondStep}, we conclude that $\widetilde V \in \mathcal V(V)$, as required.
\end{proof}

\begin{claim}
\label{ClaimCompact}
For every $V \in L^\infty((0, T) \times \Omega; \mathbb R^d)$, the set $\mathcal V(V)$ is compact.
\end{claim}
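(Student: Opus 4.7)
The plan is to exploit the fact that $\mathcal V(V) \subseteq \mathcal B$ and that $\mathcal B$ is weak-$\ast$ compact by the Banach--Alaoglu theorem: it then suffices to verify that $\mathcal V(V)$ is closed in the relative weak-$\ast$ topology of $\mathcal B$. Since $L^1((0,T) \times \Omega; \mathbb R^d)$ is separable, this topology is metrizable on the bounded set $\mathcal B$, so sequential closedness is enough. I therefore take a sequence $(\widetilde V_n)_{n\in\N} \subset \mathcal V(V)$ with $\widetilde V_n \xrightharpoonup{\ast} \widetilde V$ in $L^\infty((0,T)\times\Omega;\mathbb R^d)$ and show $\widetilde V \in \mathcal V(V)$.

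The essential simplification compared with Claim~\ref{ClaimClosedGraph} is that here $V$ is held fixed, so the companion functions $\rho = \mathcal S_{\mathrm{FP}}(V)$ and $\varphi = \mathcal S_{\mathrm{HJB}}(\kappa \circ \rho) \in L^2((0,T);H_0^1(\Omega))$ do not depend on $n$. It then remains to check that the two a.e.\ conditions in the definition of $\mathcal V(V)$ pass to the weak-$\ast$ limit. For the pointwise bound $\abs{\widetilde V} \leq \kappa(\rho)$, the plan is to test against an arbitrary $\phi \in L^1((0,T)\times\Omega;\mathbb R^d)$: the estimate $\abs{\int_0^T\int_\Omega \widetilde V_n \cdot \phi \,\diff x\,\diff t} \leq \int_0^T\int_\Omega \kappa(\rho)\abs{\phi}\,\diff x\,\diff t$ survives the weak-$\ast$ limit, and then specializing to $\phi = \chi_A\,\widetilde V/\abs{\widetilde V}$ on $\{\widetilde V \neq 0\}$ (extended by $0$ elsewhere) for arbitrary measurable $A \subseteq (0,T)\times\Omega$ yields $\int_A \abs{\widetilde V} \leq \int_A \kappa(\rho)$, whence $\abs{\widetilde V} \leq \kappa(\rho)$ a.e.

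For the identity $\widetilde V\cdot\nabla\varphi = -\kappa(\rho)\abs{\nabla\varphi}$, the key observation is that $\nabla\varphi$ is independent of $n$ and lies in $L^2((0,T)\times\Omega;\mathbb R^d)$. For every test function $v \in L^\infty((0,T)\times\Omega)$, the product $(\nabla\varphi)\,v$ belongs to $L^2\subset L^1$ since $(0,T)\times\Omega$ has finite measure, so weak-$\ast$ convergence of $\widetilde V_n$ against this fixed $L^1$ datum lets me pass to the limit in $\int_0^T\int_\Omega \widetilde V_n\cdot\nabla\varphi\,v\,\diff x\,\diff t = -\int_0^T\int_\Omega \kappa(\rho)\abs{\nabla\varphi}\,v\,\diff x\,\diff t$, and the arbitrariness of $v$ propagates the equality to $\widetilde V$ almost everywhere. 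The main technical subtlety, rather than a genuine obstacle, is recovering a pointwise a.e.\ bound from a duality inequality in the first step; beyond that, the proof is a clean linearity argument made possible by freezing $V$, and no regularity beyond what Propositions~\ref{PropFP} and \ref{PropHJB} already provide is needed.
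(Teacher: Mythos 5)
Your proof is correct, but it takes a more self-contained route than the paper's. The paper's proof of this claim is a one-liner: since $\mathcal G$ has already been shown to be closed (Claim~\ref{ClaimClosedGraph}) inside the compact product $\mathcal B \times \mathcal B$, the set $\mathcal G \cap (\{V\} \times \mathcal B)$ is compact, and $\mathcal V(V)$ is its image under the (continuous) second projection, hence compact. You instead re-derive closedness of $\mathcal V(V)$ directly, by fixing $V$ and passing the two defining a.e.\ conditions through the weak-$\ast$ limit. Your argument is essentially a streamlined special case of the closed-graph argument: because $V$ is frozen, the companion pair $(\rho,\varphi)$ does not move, so you never need the continuity results of Propositions~\ref{PropContinuityFP} and \ref{PropContinuityHJB}, and the duality/testing steps become linear. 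What the paper's route buys is economy (it amortizes the work already done for Claim~\ref{ClaimClosedGraph}); what yours buys is independence from that claim and a cleaner exposure of the fact that, with $V$ fixed, closedness is a purely linear dual-pairing statement. Both are valid; the details in your argument (the $\chi_A\,\widetilde V/\abs{\widetilde V}$ test function to recover the pointwise bound, and testing against $v\,\nabla\varphi \in L^1$ for the equality) check out.
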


\begin{proof}
This is a consequence of the fact that $\mathcal G$ is a closed subset of the compact set $\mathcal B \times \mathcal B$.
\end{proof}

Thanks to Claims~\ref{ClaimClosedGraph} and \ref{ClaimCompact}, it follows from \cite[Proposition~1.4.8]{Aubin2009Set} that the set-valued map $\mathcal V$ is upper semi-continuous. Using this fact and Claims~\ref{ClaimNonemptyConvex} and \ref{ClaimCompact}, it follows from Kakutani's fixed point theorem that $\mathcal V$ admits a fixed point $V \in \mathcal B$. Let $\rho = \mathcal S_{\mathrm{FP}}(V)$ and $\varphi = \mathcal S_{\mathrm{HJB}}(\kappa \circ \rho)$. Using the facts that $\rho$ and $\varphi$ are solutions of \eqref{FPEqn} and \eqref{HJBEqn}, respectively, and that $V \in \mathcal V(V)$, it is immediate to verify, using Definitions~\ref{DefiWeakSolFP}, \ref{DefiWeakSolHJB}, and \ref{DefiWeakSolMFG}, that $(\rho, \varphi)$ is a weak solution of \eqref{MFGSystFiniteHorizon} with initial condition $\rho_0$ and final condition $\psi$, as required.
\end{proof}

\section{The MFG system with an infinite time horizon}
\label{SecMFGInfinite}

Now that we have established in Section~\ref{SecMFGFinite} the existence of solutions to the Mean Field Game system \eqref{MFGSystFiniteHorizon} in a finite time horizon $T$, we consider in this section the Mean Field Game system \eqref{MFGSystInfiniteHorizon} with an infinite time horizon. Let us first provide the definition of a weak solution in this setting.

\begin{definition}
Let $\nu > 0$, $\kappa: \mathbb R \to (0, +\infty)$ be continuous and bounded, and $\rho_0 \in L^1(\Omega)$.
We say that $(\rho, \varphi) \in L^\infty_{\mathrm{loc}}(\mathbb R_+; L^1(\Omega)) \times L^2_{\mathrm{loc}}(\mathbb R_+; H_0^1(\Omega))$ is a \emph{weak solution} of \eqref{MFGSystInfiniteHorizon} with initial condition $\rho_0$ if $\varphi \in L^\infty(\mathbb R_+ \times \Omega)$ and if there exists $V \in L^\infty(\mathbb R_+ \times \Omega; \mathbb R^d)$ such that $\abs{V(t, x)} \leq \kappa(\rho(t, x))$ and $V(t, x) \cdot \nabla\varphi(t, x) = - \kappa(\rho(t, x)) \abs*{\nabla\varphi(t, x)}$ a.e.\ on $\mathbb R_+ \times \Omega$ and such that, for every $T>0$, $\rho$ is a solution of the Fokker--Planck equation \eqref{FPEqn} with initial datum $\rho_0$ and vector field $V$ on $[0,T]\times\Omega$ in the sense of Definition~\ref{DefiWeakSolFP} and $\varphi$ is a solution of the Hamilton--Jacobi--Bellman equation \eqref{HJBEqn} with $K=\kappa(\rho)$ in the sense of Definition~\ref{DefiWeakSolHJB} on the same domain\footnote{Note that Definition~\ref{DefiWeakSolHJB} requires to fix a final value, and we did not define the notion of solution independently of the final value $\psi$. This could be formalized as ``there exists $\psi\in L^2(\Omega)$ such that $\varphi$ is a solution of \eqref{HJBEqn}''. Yet, since the function $\varphi$ will be finally continuous as a function valued into $L^2(\Omega)$, the final datum on $[0,T]$ will be necessarily given by its own value $\varphi(T, \cdot)$.}.
\end{definition}

Notice that, with respect to Definition~\ref{DefiWeakSolMFG}, we make the additional requirement that $\varphi \in L^\infty(\mathbb R_+ \times \Omega)$. This is done mainly for three reasons. Firstly, boundedness of the solution of a Hamilton--Jacobi--Bellman equation is a condition usually required in order to ensure that this solution is the value function of an optimal control problem (see, e.g., \cite[Theorem~8.1.10]{Cannarsa2004Semiconcave} and \cite[Chapter~II, Corollary~9.1]{Fleming2006Controlled}). Secondly, the strategy we use in this section to prove existence of a solution of \eqref{MFGSystInfiniteHorizon}, based on a limit argument from solutions of \eqref{MFGSystFiniteHorizon} in finite time horizon $T$ as $T \to +\infty$, allows us to ensure that the function $\varphi: \mathbb R_+ \times \Omega \to \mathbb R$ we construct is indeed bounded. Finally, boundedness of $\varphi$ is an important property in order to establish the results on the the asymptotic behavior of solutions to \eqref{MFGSystInfiniteHorizon} provided in Theorem~\ref{th long time} and Propositions~\ref{PropExponentialConvergence} and \ref{prop:VarphiConvergesInH1}.

\subsection{Existence of solutions and their asymptotic behavior}

From now on, we let $\Psi$ denote the solution of the (stationary) Hamilton--Jacobi--Bellman equation
\begin{equation}
\label{eq:HJB-Psi}
-\nu\Delta \Psi + \kappa(0)\vert \nabla  \Psi \vert = 1, \quad x\in \O,
\end{equation}
with Dirichlet boundary conditions $\Psi=0$ on $\partial\Omega$. Existence of such a solution $\Psi$ follows from standard results on elliptic equations, and $\Psi$ is continuous in the closure of $\O$ and $C^2$ and positive in $\O$ (see, e.g., \cite[Theorem~15.12]{Gilbarg2001Elliptic}, \cite[Chapter~4, Section~8]{Ladyzhenskaya1968Linear}, \cite{Lions1980Resolution}; these results require additional regularity properties on $\partial\Omega$ but they can be easily adapted to a $C^2$ boundary using the techniques from \cite[Section~15.6]{Gilbarg2001Elliptic} or \cite[Chapter~4, pp.~309--310]{Ladyzhenskaya1968Linear}). Uniqueness of $\Psi$ follows also from classical arguments for elliptic equations based on the maximum principle: the difference $\widetilde\Psi$ of two solutions of \eqref{eq:HJB-Psi} is zero on $\partial\Omega$ and satisfies $-\nu \Delta\widetilde\Psi - \kappa(0) \abs{\nabla\widetilde\Psi} \leq 0$ in $\Omega$, and hence the maximum principle from \cite[Theorem~10.9]{Gilbarg2001Elliptic} allows one to conclude that $\widetilde\Psi = 0$ in $\Omega$.

The main result of this section is the following.

\begin{theorem}\label{th long time}
Let $\rho_0 \in L^1(\Omega)$. Then, there exists at least one solution $(\rho,\varphi)$ to the Mean Field Game system with infinite time horizon \eqref{MFGSystInfiniteHorizon}.

In addition, any such solution satisfies
$$
\rho_t \underset{t \to +\infty}{\longrightarrow} 0, \quad \varphi_t \underset{t\to +\infty}{\longrightarrow} \Psi,
$$
and the above convergences hold uniformly.
\end{theorem}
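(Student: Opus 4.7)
The proof splits into two parts: existence via a limit of finite-horizon solutions, and asymptotic analysis via an energy identity. My plan for existence is to apply Theorem~\ref{MainTheoFixedT} with final data $\psi^T = \Psi$, which lies in $H_0^1(\O) \cap L^\infty(\O)$ and is non-negative, yielding finite-horizon solutions $(\rho^T, \varphi^T, V^T)$. The crucial observation is that the $L^\infty$ upper bound on $\varphi^T$ from Proposition~\ref{PropHJB} (obtained there by comparison with a translated torsion function) depends only on $\nu$, $\O$, and $\norm{\Psi}_{L^\infty}$ and is therefore uniform in $T$. Testing the HJB equation against $\varphi^T$ itself and using this $L^\infty$ bound produces a $T$-uniform $L^2((0,N);H_0^1)$ bound for each fixed $N$; combined with the estimates of Proposition~\ref{PropFP} for $\rho^T$, Aubin--Lions compactness together with a diagonal extraction over $N \to \infty$ yields a limit $(\rho, \varphi)$ and a weak-$\ast$ limit $V$ of $V^T$. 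Passing to the limit in the Fokker--Planck equation is routine, while for the HJB equation one uses the strong $L^2_{\mathrm{loc}}(H_0^1)$ convergence of $\varphi^T$ (as in the proof of Proposition~\ref{PropContinuityHJB}) to handle $\kappa(\rho)\abs{\nabla\varphi}$; the constraints $\abs{V}\leq\kappa(\rho)$ and $V\cdot\nabla\varphi=-\kappa(\rho)\abs{\nabla\varphi}$ pass to the limit by convexity and dominated convergence.

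The heart of the asymptotic analysis is the identity
\begin{equation*}
\frac{d}{dt}\int_\O \rho_t\varphi_t\,\diff x=-\int_\O \rho_t\,\diff x,
\end{equation*}
obtained by using $\varphi$ as a test function in the Fokker--Planck equation and $\rho$ as a test function in the HJB equation and summing: the diffusion terms cancel after integration by parts (legitimate since both $\rho$ and $\varphi$ vanish on $\partial\O$), and the drift/nonlinearity terms cancel thanks to the identity $V\cdot\nabla\varphi=-\kappa(\rho)\abs{\nabla\varphi}$ built into Definition~\ref{DefiWeakSolMFG}. Since $\rho,\varphi\geq 0$ and $\varphi\leq\norm{\varphi}_{L^\infty}$, setting $f(t)=\int_\O \rho_t\varphi_t$ yields $f'(t)=-\norm{\rho_t}_{L^1}\leq -f(t)/\norm{\varphi}_{L^\infty}$, so $f$ decays exponentially; combined with the monotonicity of $t\mapsto\norm{\rho_t}_{L^1}$ (from the Dirichlet boundary condition and $\rho\geq 0$), this forces $\norm{\rho_t}_{L^1}\to 0$ exponentially via $\norm{\rho_{t+1}}_{L^1}\leq \int_t^{t+1}\norm{\rho_s}_{L^1}\diff s = f(t)-f(t+1)$. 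The upgrade to $L^\infty$ convergence of $\rho_t$ comes from the global parabolic $L^\infty$ estimate of the appendix, applied on moving intervals $[t,t+1]$ to the Fokker--Planck equation whose drift is uniformly bounded by $\kappa$.

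The convergence $\varphi_t\to\Psi$ is the main obstacle. Since $\norm{\rho_t}_{L^\infty}\to 0$, the coefficient $\kappa(\rho)$ converges uniformly to $\kappa(0)$, so the HJB equation becomes asymptotically autonomous. I would argue by compactness: considering the time-translates $\widetilde\varphi_n(t,x):=\varphi(t+t_n,x)$ for $t_n\to\infty$ and using the uniform $L^\infty$ bound together with parabolic regularity, extract a subsequential limit $\widetilde\varphi$ solving $-\partial_t\widetilde\varphi-\nu\Delta\widetilde\varphi+\kappa(0)\abs{\nabla\widetilde\varphi}=1$ on $\R\times\O$ with Dirichlet boundary conditions. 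A Liouville-type argument using Proposition~\ref{prop cp} on arbitrarily long backward windows, combined with the boundedness of $\widetilde\varphi$ and the uniqueness (already established in the excerpt) of the bounded stationary solution of \eqref{eq:HJB-Psi}, forces $\widetilde\varphi\equiv\Psi$; this identification is the core difficulty, essentially because $\Psi+c$ is also an entire solution for any constant $c$, so ruling out such drifts requires exploiting the vanishing of $\widetilde\varphi$ on $\partial\Omega$ together with global boundedness. Uniqueness of the subsequential limit then yields $\varphi_t\to\Psi$ in a weak sense. To upgrade to uniform and $H_0^1$ convergence, one writes a parabolic equation for $w:=\varphi-\Psi$ whose source term is controlled by the exponentially small quantity $(\kappa(\rho)-\kappa(0))\abs{\nabla\Psi}$, and runs an energy estimate on $w$ supplemented by the same parabolic $L^\infty$ bound from the appendix.
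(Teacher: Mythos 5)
Your proposal follows the same three-step architecture as the paper's proof---limit of finite-horizon solutions for existence, the energy identity $\frac{d}{dt}\int_\Omega\rho\varphi=-\int_\Omega\rho$ to force $L^1$ (and then $L^\infty$) decay of $\rho$, and a compactness/Liouville argument on time-translates of $\varphi$---so the overall strategy is correct. However, two steps are left as gaps that the paper takes care to fill.

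First, the upgrade from $\norm{\rho_t}_{L^1}\to 0$ to $\norm{\rho_t}_{L^\infty}\to 0$ is not a direct application of the parabolic regularization estimate, because Proposition~\ref{prop L infty appendix} and Corollary~\ref{reg FP} require $p>1$ on the right-hand side and you only have decay in $L^1$. You need an intermediate step producing decay of $\norm{\rho_t}_{L^p}$ for some $p>1$. The paper gets an a priori (exponentially growing) $L^2$ bound by differentiating $\int_\Omega\rho^2$ and then interpolates between the exponentially decaying $L^1$ norm and the $L^2$ norm, choosing $\theta$ close to $1$ so that the decay wins. Your sketch skips this interpolation entirely and would fail as written.

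Second, and more substantively, the identification $\widetilde\varphi\equiv\Psi$ is the crux of Step~3, and you acknowledge it is ``the core difficulty'' without actually resolving it. The paper constructs an explicit squeeze: for each finite horizon $T$, let $u_T$ and $v_T$ solve the autonomous HJB on $(0,T)$ with final data $0$ and $\Phi|_\Omega+M$ respectively (where $\Phi$ is a torsion function on a strictly larger domain and $M\geq\sup\overline\varphi$). The comparison principle gives $u_T\leq\overline\varphi\leq v_T$. The key observation is that $T\mapsto u_T$ is non-decreasing and $T\mapsto v_T$ is non-increasing on any fixed $(0,T)\times\Omega$---this follows from $u_{T+h}(T,\cdot)\geq 0=u_T(T,\cdot)$ and from $v_{T+h}\leq\Phi|_\Omega+M=v_T(T,\cdot)$ (the latter using a comparison with the constant-in-time supersolution $\Phi|_\Omega+M$). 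Monotone convergence plus time-translation invariance then shows both $u_T$ and $v_T$ converge to $t$-independent bounded solutions, which by uniqueness of the stationary problem are both equal to $\Psi$. Without this monotonicity mechanism (or some replacement), your ``arbitrarily long backward windows'' Liouville argument does not close. Incidentally, your worry about $\Psi+c$ being an entire solution is a red herring: such a function fails the Dirichlet boundary condition, which is part of the problem and is preserved under the translates $\widetilde\varphi_n$, so the Dirichlet BC already rules it out.

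A smaller note: for the uniform convergence of $\varphi_t\to\Psi$ asserted in the theorem, you only need that $\kappa(\rho(\cdot+t_n,\cdot))-\kappa(0)\to 0$ uniformly, which holds by continuity of $\kappa$ once $\rho_t\to 0$ uniformly; exponential smallness of this source term (and hence H\"older continuity of $\kappa$) is only needed for the exponential rate in Proposition~\ref{PropExponentialConvergence}, not for Theorem~\ref{th long time} itself.
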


The sequel of this section is devoted to the proof of Theorem~\ref{th long time}. Let us start by giving an idea of the proof. First, we will construct solutions to the problem with infinite time horizon as limits of solutions of the problem with finite time horizon $T$ by letting $T$ go to $+\infty$. Then, to prove the long-time uniform convergence of the solutions, we shall make a crucial use of some regularity results for parabolic equations. More precisely, we will use local maximum principles for Fokker--Planck and for (forward) Hamilton--Jacobi--Bellman equations; roughly speaking, these results state that the $L^\infty(\O)$ norm of solutions of such equations at some time $t_2$ is controlled by some $L^p$ norms of the same solution at some previous time $t_1<t_2$. The results we use are proved in Appendix~\ref{AnnexeA}, see Proposition~\ref{prop L infty appendix} and Corollaries~\ref{reg FP} and \ref{reg HJB}.

We start with a lemma that gathers some useful estimates. These estimates have already been discussed in Section~\ref{SecPreliminary}, but we need now to track possible dependencies of the constant on the time horizon $T$.

\begin{lemma}\label{lem est phi}
Let $(\rho,\varphi)$ be solution of the finite horizon MFG system \eqref{MFGSystFiniteHorizon} on $[0,T]\times\O$ in the sense of Definition~\ref{DefiWeakSolMFG}, with final datum $\psi \in H_0^1(\Omega) \cap L^\infty(\Omega)$ with $\psi \geq 0$. Then, there are $C_1,C_2>0$, depending on $\norm{\psi}_{L^\infty}+\norm{\psi}_{H^1_0}$, $\sup \kappa$, $\nu$, $\Psi$ and $\O$ such that
\begin{equation}
\label{EqLInftyH1BoundVarphi}
\norm{\nabla \varphi(t,\cdot) }_{L^2}\leq C_1, \quad \text{for all }\ t \in [0,T],
\end{equation}
and
\begin{equation}
\label{EqL2H2BoundVarphi}
\norm{ \varphi }_{L^2((T_1,T_2);H^2)} \leq C_2(1+\vert T_2-T_1\vert).
\end{equation}
\end{lemma}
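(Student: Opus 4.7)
The plan is to perform two carefully chosen energy estimates on the Hamilton--Jacobi--Bellman equation, designed so that all constants remain independent of $T$. To begin, Proposition~\ref{PropHJB} applied with $K = \kappa(\rho) \geq 0$, $\psi \geq 0$ and $\psi\in H_0^1\cap L^\infty$ yields $0\le\varphi\le M$ a.e., where $M$ depends only on $\nu$, $\Omega$, $\|\psi\|_{L^\infty}$ (and hence can be absorbed into the dependence on $\Psi$ and $\Omega$). It is convenient to reverse time, setting $u(s,x):=\varphi(T-s,x)$ and $\tilde\rho(s,x):=\rho(T-s,x)$, so that $u$ satisfies the forward equation $\partial_s u-\nu\Delta u+\kappa(\tilde\rho)|\nabla u|=1$ with initial datum $\psi$ and Dirichlet boundary data $0$; the regularity provided by Proposition~\ref{PropHJB} ($u\in C^0([0,T];H_0^1)\cap L^2((0,T);H^2)$ with $\partial_s u\in L^2$) justifies the computations below.

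The first estimate is obtained by testing the equation with $u$ itself. Since $u\ge 0$ and $\kappa>0$, the term $\int u\,\kappa(\tilde\rho)|\nabla u|$ can be dropped, yielding after integration over $(s,s+1)\times\Omega$
\[
\nu\int_s^{s+1}\|\nabla u(\tau)\|_{L^2}^2\,\diff\tau\le\tfrac12\|u(s)\|_{L^2}^2+\int_s^{s+1}\!\!\int_\Omega u\le C_0,
\]
with $C_0=C_0(M,|\Omega|,\nu)$ independent of $s$ and $T$. The second estimate comes from testing with $\partial_s u$ and integrating by parts in space (boundary terms vanish since $\partial_s u=0$ on $\partial\Omega$), producing
\[
\|\partial_s u\|_{L^2}^2+\tfrac{\nu}{2}\tfrac{d}{ds}\|\nabla u\|_{L^2}^2=\int_\Omega\partial_s u-\int_\Omega\kappa(\tilde\rho)|\nabla u|\,\partial_s u;
\]
two applications of Young's inequality give $y'(s)\le Ay(s)+B$ with $y(s):=\|\nabla u(s)\|_{L^2}^2$ and $A=2(\sup\kappa)^2/\nu$, $B=2|\Omega|/\nu$, and additionally $\tfrac12\|\partial_s u\|_{L^2}^2+\tfrac{\nu}{2}y'(s)\le(\sup\kappa)^2 y(s)+|\Omega|$.

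To obtain \eqref{EqLInftyH1BoundVarphi}, I combine the two estimates on unit windows. For $s\ge 1$, the integrated bound and the mean-value theorem furnish some $\tau\in[s-1,s]$ with $y(\tau)\le C_0/\nu$; Grönwall's lemma on $[\tau,s]$, of length at most one, yields $y(s)\le (C_0/\nu)e^A+B(e^A-1)/A$. For $s\in[0,1]$ Grönwall from $s=0$ gives $y(s)\le\|\nabla\psi\|_{L^2}^2 e^A+B(e^A-1)/A$. In both cases $y(s)\le C_1$ independently of $T$, and reverting to $\varphi$ produces \eqref{EqLInftyH1BoundVarphi}. For \eqref{EqL2H2BoundVarphi}, I integrate the second energy identity over $(s_1,s_2)$ and use the uniform bound on $y$ to get $\int_{s_1}^{s_2}\|\partial_s u\|_{L^2}^2\le C(1+(s_2-s_1))$; isolating $\nu\Delta u=\partial_s u+\kappa(\tilde\rho)|\nabla u|-1$ and squaring controls $\int_{s_1}^{s_2}\|\Delta u\|_{L^2}^2$ by the same quantity, after which elliptic regularity on the $C^2$ domain $\Omega$ converts this into the analogous $H^2$ bound. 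Taking square roots and using $\sqrt{1+x}\le 1+x$ for $x\ge 0$ completes \eqref{EqL2H2BoundVarphi}.

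The main obstacle is precisely that the direct estimate of Proposition~\ref{PropHJB} on $\|\varphi\|_{L^\infty((0,T);H_0^1)}$ depends on $T$, so it cannot be used later when passing to the infinite-horizon limit. The crux is the \emph{averaged} gradient bound on unit windows, which is $T$-independent only because the non-negativity of $u$ and of the source term $1$ let us discard the sign-indefinite drift contribution when testing with $u$; the Grönwall propagation on short intervals then promotes this averaged estimate to a pointwise one while keeping the exponential factor bounded.
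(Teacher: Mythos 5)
Your proof is correct and follows essentially the same strategy as the paper's: a first energy estimate obtained by testing with the solution itself (using non-negativity to drop the drift term), a Grönwall-type differential inequality for $\|\nabla\varphi(t,\cdot)\|_{L^2}^2$ propagated over unit time intervals to upgrade the averaged bound to a pointwise one, and finally an integrated identity combined with the equation and elliptic regularity to control the $H^2$ norm. The only differences are cosmetic — you work in reversed time $u(s,\cdot)=\varphi(T-s,\cdot)$, use a mean-value argument where the paper integrates the Grönwall inequality against an exponential weight, and extract $\|\Delta u\|_{L^2}$ via a bound on $\|\partial_s u\|_{L^2}$ rather than directly from the identity for $\tfrac{d}{dt}\|\nabla\varphi\|_{L^2}^2$ — but the underlying estimates and the resulting $T$-uniform constants coincide.
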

\begin{proof}

\step{1}{A preliminary estimate}

Let us start with giving an estimate on the gradient of $\varphi$. First, multiplying by $\varphi$ the equation satisfied by $\varphi$ and integrating on $\O$ for a fixed $t\in (0,T)$, we find
$$
-\frac{\diff}{\diff t}\left(\frac{1}{2}\int_\O \varphi^2 \right)= -\nu \int_\O \vert \nabla \varphi \vert^2-\int_\O \kappa(\rho) \vert \nabla \varphi \vert \varphi +\int_\O\varphi.
$$
Therefore, since $\varphi$ is bounded, we have
\begin{equation}\label{est grad phi}
\int_{T_1}^{T_2} \int_\O\vert \nabla \varphi\vert^2 \leq C(1+\vert T_2-T_1\vert),
\end{equation}
for some $C>0$ depending on $\sup \varphi$, $\lvert \O\rvert$, $\sup \kappa$, $\nu$
and for every $T_1, T_2 \in [0, T]$ with $0 \leq T_1 \leq T_2 \leq T$.
Note that, from Proposition~\ref{PropHJB}, $\sup \varphi$ is bounded in terms of $\nu$, $\Omega$, and $\norm{\psi}_{L^\infty}$.

\step{2}{Bound on $\norm{ \nabla \varphi(t,\cdot)}_{L^2}$}

We define, for $t\in[0,T]$,
$$
u(t) := \frac{1}{2}\int_{\O}\vert \nabla \varphi(t,x)\vert^2 \diff x.
$$
We differentiate $u$ to obtain
\begin{equation}\label{est pour H2}
u^\prime(t) = \nu \int_\O (\Delta \varphi)^2 - \int_\O \kappa(\rho) \lvert \nabla \varphi\rvert \Delta \varphi + \int_\O \Delta \varphi .
\end{equation}
Using Young's inequality, we find that there are $K_1,K_2>0$ depending only on $\lvert \O\rvert$, $\sup \kappa$, $\nu$ such that
$$
u^\prime(t) +K_1 u(t) +  K_2 \geq 0.
$$
This implies, for any $0\leq t<s\leq T$,
\begin{equation}\label{a int}
u(s)+ \frac{K_2}{K_1}\left(1 - e^{- K_1(s - t)}\right) \geq u(t)e^{-K_ 1(s -t)}.
\end{equation}
We integrate \eqref{a int} for $s \in (t, t+1)$ to get
\[
\frac{1}{2}\int_{t}^{t+1}\int_{\O}\vert \nabla \varphi \vert^2(s,x)\diff x \diff s + \frac{K_2}{K_1}\int_0^1 (1-e^{-K_1r})\diff r \geq \frac{1}{2} \left(\int_0^1e^{-K_1r}\diff r\right) \int_{\O}\vert \nabla \varphi \vert^2(t,x)\diff x .
\]
Using \eqref{est grad phi} yields the $L^\infty(H^1)$ bound \eqref{EqLInftyH1BoundVarphi} for $t\in [0,T-1)$. To get the $L^\infty(H^1)$ bound \eqref{EqLInftyH1BoundVarphi} for $t\in [T-1,T]$, we use \eqref{a int} with $s = T$. The result follows, with a constant also depending on $u(T) = \frac{1}{2}\int_\O \vert \nabla \psi \vert^2 <+\infty$.

\step{3}{Bound in $L^2((T_1,T_2);H^2)$}

Let us integrate \eqref{est pour H2} on $(T_1,T_2)$. We find
$$
\nu \int_{T_1}^{T_2}\int_\O (\Delta \varphi)^2 = u(T_2)-u(T_1) + \int_{T_1}^{T_2}\int_\O \kappa(\rho) \vert \nabla \varphi \vert\Delta \varphi - \int_{T_1}^{T_2}\int_\O \Delta \varphi.
$$
Using Young's inequality on $\int_{T_1}^{T_2}\int_\O \kappa(\rho) \vert \nabla \varphi \vert\Delta \varphi$ and $\int_{T_1}^{T_2}\int_\O \Delta \varphi$ and the estimate \eqref{est grad phi}, we get the desired bound \eqref{EqL2H2BoundVarphi} on $L^2((T_1,T_2);H^2)$.
\end{proof}

The next lemma shows that the time derivative of $\int_\O \rho \varphi$ is equal to $-\int_\O \rho$. Differentiating the average value of the value function is a classical computation in Mean Field Game theory. Since here the value function is an exit time, it is expected that it should decrease with rate $1$, and one can guess the result from the fact that the total mass of the agents in this model is not fixed but decreases in time and is equal to $\int_\O \rho$.

\begin{lemma}\label{lem rho phi}
Let $(\rho,\varphi)$ be a solution of the finite-horizon MFG \eqref{MFGSystFiniteHorizon} on $[0,T]\times\O$ in the sense of Definition~\ref{DefiWeakSolMFG}. Then, for a.e.\ $t$, we have
$$
\frac{\diff}{\diff t} \left(\int_\O \rho(t,x) \varphi(t,x)\diff x\right) = -\int_\O \rho(t,x)\diff x.
$$
\end{lemma}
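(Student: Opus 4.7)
The plan is to exploit the classical MFG duality: test the Fokker--Planck equation against $\varphi$ and the Hamilton--Jacobi--Bellman equation against $\rho$. Fix $0 < t_1 < t_2 < T$. By Proposition~\ref{PropFP} applied on $[t_1, t_2]$, we have $\rho \in C^0([t_1, t_2]; L^2(\Omega)) \cap L^2((t_1, t_2); H_0^1(\Omega))$ with $\partial_t \rho \in L^2((t_1, t_2); H^{-1}(\Omega))$. For $\varphi$, rewriting the HJB equation as a backward heat equation $-\partial_t\varphi - \nu\Delta\varphi = 1 - \kappa(\rho)\abs{\nabla\varphi}$ whose right-hand side lies in $L^2((t_1, t_2) \times \Omega)$ (since $\kappa$ is bounded and $\nabla\varphi \in L^2$) yields $\varphi \in C^0([t_1, t_2]; L^2(\Omega))$ and $\partial_t\varphi \in L^2((t_1, t_2); H^{-1}(\Omega))$; together with the inclusion $\varphi \in L^2((t_1, t_2); H_0^1(\Omega))$ built into Definition~\ref{DefiWeakSolHJB}, this places both $\rho$ and $\varphi$ in the class of admissible test functions for the integrated identities \eqref{FPWeakH1_2} and \eqref{eq testt_0t_1}.

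Taking $\eta = \varphi$ in \eqref{FPWeakH1_2} and using the identity $V \cdot \nabla\varphi = -\kappa(\rho)\abs{\nabla\varphi}$ from Definition~\ref{DefiWeakSolMFG} gives
\[
\int_{t_1}^{t_2}\!\int_\Omega \left(-\rho\partial_t\varphi + \nu\nabla\rho\cdot\nabla\varphi + \rho\kappa(\rho)\abs{\nabla\varphi}\right) = \int_\Omega \rho(t_1)\varphi(t_1) - \int_\Omega \rho(t_2)\varphi(t_2),
\]
while $\eta = \rho$ in \eqref{eq testt_0t_1} with $K = \kappa(\rho)$ gives
\[
\int_{t_1}^{t_2}\!\int_\Omega \left(\varphi\partial_t\rho + \nu\nabla\varphi\cdot\nabla\rho + \rho\kappa(\rho)\abs{\nabla\varphi} - \rho\right) = \int_\Omega \varphi(t_2)\rho(t_2) - \int_\Omega \varphi(t_1)\rho(t_1).
\]
Subtracting the first identity from the second cancels both the gradient-diffusion and the drift-coupling terms, leaving
\[
\int_{t_1}^{t_2}\!\int_\Omega \left(\varphi\partial_t\rho + \rho\partial_t\varphi - \rho\right) = 2\left(\int_\Omega \rho(t_2)\varphi(t_2) - \int_\Omega \rho(t_1)\varphi(t_1)\right).
\]
The Lions--Magenes product rule, applicable to $(\rho,\varphi)$ thanks to the regularity established above, identifies the left-hand side with $\int_\Omega \rho(t_2)\varphi(t_2) - \int_\Omega \rho(t_1)\varphi(t_1) - \int_{t_1}^{t_2}\!\int_\Omega \rho$. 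Rearranging yields the integrated identity $\int_\Omega \rho(t_2)\varphi(t_2) - \int_\Omega \rho(t_1)\varphi(t_1) = -\int_{t_1}^{t_2}\!\int_\Omega \rho$, and since $t \mapsto \int_\Omega \rho\varphi$ is absolutely continuous on $[0,T]$ (using $\rho \in C^0([0,T];L^1(\Omega))$ and the bound on $\varphi$), differentiating produces the claim for a.e.\ $t$.

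The main obstacle is not the algebraic cancellation but the regularity bookkeeping: one must stay on an interval $[t_1,t_2] \subset (0,T)$ bounded away from $t=0$ in order to invoke the parabolic regularization of $\rho$ from Proposition~\ref{PropFP}, and one must extract $\partial_t\varphi \in L^2((t_1,t_2);H^{-1}(\Omega))$ from the HJB equation itself so that the Lions--Magenes product rule applies. Once this is in place the proof reduces to the computation above.
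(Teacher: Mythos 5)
Your proof follows the same route as the paper's: fix $0<t_1<t_2<T$, use the parabolic regularization of Proposition~\ref{PropFP} on $(t_1,t_2)$ to make $\rho$ an admissible test function, use $\varphi$ and $\rho$ as cross test functions in \eqref{FPWeakH1_2} and \eqref{eq testt_0t_1}, cancel the gradient and drift-coupling terms via $V\cdot\nabla\varphi+\kappa(\rho)\abs{\nabla\varphi}=0$, and absorb $\int\!\!\int(\rho\partial_t\varphi+\varphi\partial_t\rho)$ by the Lions--Magenes product rule. The only small imprecision is the final sentence: the integrated identity established for all $0<t_1<t_2<T$ already yields that $t\mapsto\int_\Omega\rho\varphi$ is absolutely continuous on $(0,T)$ with the stated derivative a.e., so there is no need to (and you have not in fact) establish absolute continuity up to the endpoints $t=0,T$; the a.e.\ statement is on the open interval, exactly as in the paper.
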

\begin{proof}
Let us fix two instants of times $t_1<t_2\leq T$, with $t_1>0$. On the interval $(t_1,t_2)$ we can use $\varphi$ as a test function in \eqref{FPWeakH1_2} and $\rho$ in \eqref{eq testt_0t_1} since both $\varphi$ and $\rho$ are continuous as curves valued in $L^2$, belong to $L^2((t_1,t_2); H^1_0(\O))$, and their time-derivatives belong to $L^2((t_1,t_2); H^{-1}(\O))$. We subtract the two equalities that we obtain, which provides
\begin{multline*}
    \int_{t_1}^{t_2} \int_\Omega \rho \partial_t \varphi \diff x \diff t - \int_{t_1}^{t_2}  \int_{\Omega} \left(\nu \nabla\rho - \rho V\right) \cdot \nabla\varphi \diff x \diff t \\+
\int_{t_1}^{t_2} \int_\Omega \left(\varphi \partial_t \rho + \nu \nabla\varphi \cdot \nabla\rho + (\kappa(\rho) \abs*{\nabla\varphi} - 1) \rho \right)\\= 2\int_\Omega \varphi(t_2,x) \rho(t_2, x) \diff x-2\int_\Omega \varphi(t_1,x) \rho(t_1, x) \diff x.
\end{multline*}
After canceling the terms with $\nabla\rho\cdot\nabla\varphi$ and using $V\cdot\nabla\varphi+\kappa(\rho)\abs{\nabla\varphi}=0$ we are left with
\begin{multline*}
 \int_{t_1}^{t_2} \int_\Omega (\rho \partial_t \varphi +\varphi\partial_t\rho)\diff x \diff t 
-\int_{t_1}^{t_2} \int_\Omega \rho\diff x \diff t \\ = 2\int_\Omega \varphi(t_2,x) \rho(t_2, x) \diff x-2\int_\Omega \varphi(t_1,x) \rho(t_1, x) \diff x.
\end{multline*}
It is then easy to see, by approximation via smooth functions, that for every pair $(\rho,\varphi)$ such that $\rho,\varphi\in L^2((t_1, t_2);H^1(\O))$ and $\partial_t\rho,\partial_t\varphi\in L^2((t_1, t_2);H^{-1}(\O))$, we have 
$$\int_{t_1}^{t_2} \int_\Omega (\rho \partial_t \varphi +\varphi\partial_t\rho)\diff x \diff t =\int_\Omega \varphi(t_2,x) \rho(t_2, x) \diff x-\int_\Omega \varphi(t_1,x) \rho(t_1, x) \diff x.$$
We are then left with
$$\int_\Omega \varphi(t_2,x) \rho(t_2, x) \diff x-\int_\Omega \varphi(t_1,x) \rho(t_1, x) \diff x=-\int_{t_1}^{t_2} \int_\Omega \rho\diff x \diff t,$$
which is equivalent to the claim.
\end{proof}

We are now in position to prove Theorem~\ref{th long time}.

\begin{proof}[Proof of Theorem~\ref{th long time}]
Let $\rho_0 \in L^1(\O)$ be fixed.

\step{1}{Existence}

For $T>0$, we let $(\rho^T,\varphi^T)$ denote a solution of \eqref{MFGSystFiniteHorizon} with $T>0$, with initial datum $\rho_0$ for $\rho$ and with final datum $\psi^T$ for $\varphi$, where $(\psi^T)_{T > 0}$ is any family of non-negative functions, bounded in $L^\infty(\Omega) \cap H^1_0(\Omega)$.

Recall that, by Proposition~\ref{PropHJB}, $\norm{\varphi^T}_{L^\infty((0,T) \times \O)}$ is bounded independently of $T$. Let $0 < T_1 < T_2$ be fixed. Lemma~\ref{lem est phi} implies that, as soon as $T > T_2$, $\varphi^T$ is bounded in $L^2((T_1,T_2);H^2(\O))$ independently of $T>0$. Moreover, because $\partial_t\varphi^T \in L^2((T_1,T_2) \times \O)$ owing to Proposition~\ref{PropHJB}, we can apply Aubin--Lions Lemma to the sequence $(\varphi^T)_{T>0}$ to get that, up to extraction, it converges strongly in $L^2_{loc}((0,+\infty);H^1(\O))$ to some limit $\varphi_\infty$. Up to another extraction, we ensure that the convergence of $\varphi^T, \nabla \varphi^T$ also holds pointwise.

Using Aubin--Lions Lemma for the sequence $(\rho^T)_{T>0}$ as in the proof of Proposition~\ref{PropContinuityFP}, we find that, up to another extraction, it converges strongly to a limit $\rho_\infty$ in $L^2((T_1,T_2)\times\O)$ and weakly in $L^2((T_1,T_2);H^1_0(\O))$. The solutions $(\rho^T, \varphi^T)$ are associated with a bounded vector field $V_T$, which will converge weakly-$\ast$ in $L^\infty$ to a vector field $V_\infty$. Using the same arguments as in the proof of Theorem~\ref{MainTheoFixedT}, Claim~\ref{ClaimClosedGraph}, we can pass to the limit $T\to +\infty$ in the equation to find that the pair $(\rho_\infty, \varphi_\infty)$ solves \eqref{MFGSystInfiniteHorizon}.

\step{2}{Long-time behavior of $\rho$}

Let $(\rho,\varphi)$ be a solution of \eqref{MFGSystInfiniteHorizon}, as built in the previous step. The integral version of Lemma~\ref{lem rho phi}, which is valid for $(\rho^T,\varphi^T)$, also applies to $(\rho,\varphi)$ at the limit, and we have 
$$
\frac{\diff}{\diff t} \int_\O \rho(t,x)\varphi(t,x)\diff x \leq \frac{-1}{\sup \varphi}\left(\int_\O\rho(t,x)\varphi(t,x)\diff x\right),
$$
hence, for all $t \geq 0$, we have
$$
\int_\O\rho(t,x)\varphi(t,x)\diff x \leq \left(\int_\O\rho_0(x)\varphi(0,x)\diff x\right)e^{-\frac{1}{\sup \varphi} t}.
$$
Moreover, using the fact that $t\mapsto \int_\O \rho(t,x)\diff x$ is non-increasing, we get, integrating the relation from Lemma~\ref{lem rho phi},
$$
\int_\O \rho(t,x)\diff x \leq \int_{t-1}^t\int_\O \rho(\tau,x)\diff x d\tau \leq \int_\O\rho(t-1,x)\varphi(t-1,x)\diff x,
$$
from which we get that there are $\alpha,\beta>0$ such that
$$
\int_\O\rho(t,x)\diff x \leq \beta e^{-\alpha t}.
$$
Now, let us denote $u(t) := \int_\O \rho^2(t,x)\diff x$. This is well defined for all $t>0$. We have
$$
u^\prime(t) = -2\nu \int_\O \vert \nabla \rho \vert^2 - 2 \int_\O \rho V\cdot\nabla \rho,
$$
and, using Young's inequality, we get that there is $\delta>0$ (depending on $\sup \kappa$ and $\nu$) such that
$$
u^\prime - 2 \delta u \leq 0.
$$
Hence
$$
\int_\O \rho^2(t,x)\diff x \leq \left(\int_\O \rho^2(1,x) \diff x  \right)e^{2 \delta (t-1)}. 
$$
Now, let $\theta \in (0,1)$ be close enough to $1$ so that $\alpha\theta > \delta (1-\theta)$. Let $p_\theta := \theta + 2(1-\theta) >1$. By classical interpolation arguments on $L^p$ spaces, one has
$$
\norm{ \rho(t,\cdot) }_{L^{p_\theta}} \leq \norm{ \rho(t,\cdot) }_{L^{1}}^{\theta} \norm{ \rho(t,\cdot) }_{L^{2}}^{1-\theta}  \leq A e^{-(\alpha\theta - \delta(1-\theta))t},
$$
where $A = \beta^\theta e^{-(1-\theta)\delta} \norm{\rho(1,\cdot)}_{L^2}^{1-\theta}$. Now that we have that the $L^{p_\theta}$ norm of $\rho(t,\cdot)$ goes to zero as $t$ goes to $+\infty$, Corollary~\ref{reg FP} gives us that the $L^{\infty}$ norm of $\rho(t,\cdot)$ also goes to zero when $t$ goes to $+\infty$.

\step{3}{Long-time behavior of $\varphi$}

We now turn to the convergence of $\varphi$ as $t \to +\infty$.
Let $(t_n)_{n\in \N}$ be a sequence of positive real numbers diverging to $+\infty$. Define 
$$
\varphi_n(t,x) := \varphi(t+t_n,x).
$$
Then, $\varphi_n$ solves
$$
-\partial_t\varphi_n - \nu \Delta \varphi_n + \kappa(\rho(t+t_n,x))\vert \nabla \varphi_n\vert - 1 =0,\quad t>-t_n, \ x\in \O.
$$
Using the same estimates as in the first step, we find that, up to a subsequence, $\varphi_n$ converges to some $\overline{\varphi}(t,x)$ in the $L_{loc}^2(H^1)$ sense, that satisfies
$$
-\partial_t\overline{\varphi}- \nu \Delta \overline{\varphi} + \kappa(0)\vert \nabla \overline{\varphi}\vert - 1 =0,\quad t \in \R, \ x\in \O,
$$
where we have used the uniform convergence $\rho(t, \cdot) \to 0$ as $t \to +\infty$ from the previous step in order to get the convergence of $\kappa(\rho(t + t_n, x))$ to $\kappa(0)$ as $n \to +\infty$. We now want to prove $\overline{\varphi} = \Psi$. From the boundedness of $\varphi$, the function $\overline{\varphi}$ is also bounded.

Let $T>0$ be fixed. Let $u_T, v_T$ be the solutions of 
\begin{equation}\label{eq 0}
-\partial_t u- \nu \Delta u + \kappa(0)\vert \nabla u\vert - 1 =0,\quad t \in (0,T), \ x\in \O,
\end{equation}
with homogeneous Dirichlet boundary conditions and final data $u_T(T,\cdot) = 0$ and $v_T(T,\cdot) =\Phi\vert_{\O} + M$, where $M \geq  \overline{\varphi}$ and $\Phi\vert_{\O} \geq 0$ is the restriction to $\O$ of the solution of the torsion equation $-\nu\Delta \Phi  = 1$ in $\O^+$ (with $\O^+$ a domain that contains $\O$, say $\O^+ := \O +B_1$) with Dirichlet boundary conditions. We recall that the existence of $u_T,v_T$ is guaranteed by Proposition~\ref{PropHJB}.

The parabolic comparison principle, Proposition~\ref{prop cp}, implies that, for every $T>0$,
$$
u_T(t,\cdot)\leq \overline{\varphi}(t,\cdot)\leq v_T(t,\cdot), \quad \text{ for }\  t\in (0,T).
$$
Let us prove that $u_T, v_T$ converge to $\Psi$, the stationary solution of \eqref{eq 0}, as $T$ goes to $+\infty$. To get this, let us show that the sequences of functions $(u_T)_{T>0}$ and $(v_T)_{T>0}$ are non-decreasing and non-increasing respectively, in the sense that $u_{T} \leq u_{T+h}$ and $v_{T} \geq v_{T+h}$ on $(0,T)\times \O$ for every $h\in (0,T)$. 

Let $T>0$ be fixed and let $h\in (0,T)$. Because \eqref{eq 0} is autonomous, $u_{T+h}$ and $u_{T}$ are both solutions of \eqref{eq 0} on $(0,T)\times \O$, with final data $u_{T+h}(T,\cdot)$ and $u_{T}(T,\cdot)=0$ respectively.
However, because $u_{T+h}(t,\cdot)\geq 0$ for $t \in (0,T+h)$ (as recalled in Proposition~\ref{PropHJB}), we have $u_{T+h}(T,\cdot)\geq u_T(T,\cdot)$. To phrase it differently, $u_{T+h}$ and $u_{T}$ are solutions of the same equation with ordered final data, hence, we can apply the comparison principle Proposition \ref{prop cp} to find that $u_{T+h}\geq u_T$ on $(0,T)\times \O$.

Similarly , we have that $v_{T+h}$ and $v_{T}$ solve \eqref{eq 0} on $(0,T)\times \O$, with final data
$v_{T+h}(T,\cdot)$ and $v_{T}(T,\cdot) = \Phi\vert_{\O} + M$. By a standard comparison principle, we have that $v_{T+h}\leq \Phi\vert_{\O} +M$. Therefore, we can apply the parabolic comparison principle Proposition~\ref{prop cp} to get that $v_{T+h}\leq v_{T}$ on $(0,T)\times \O$.

Therefore, owing to theses monotonicities, the sequences $(u_T)_{T>0}$ and $(v_T)_{T>0}$ converge a.e.\ as $T$ goes to $+\infty$ to functions that do not depend on the $t$ variable (this last fact comes from the equality 
$u_{T}(\cdot,\cdot) = u_{T+h}(\cdot+h,\cdot)$, which is true because \eqref{eq 0} is autonomous and because the solutions are unique). Moreover, arguing as in the first step, we have that these limiting functions are solutions of \eqref{eq 0}. The only stationary solution of~\eqref{eq 0} being $\Psi$, we get that $\overline\varphi(t, \cdot) = \Psi$ for every $t$. We have thus proven that
$$
w_n(t,x) := \varphi(t+t_n,x)-\Psi(x) \underset{n \to +\infty}{\longrightarrow} 0,
$$
in the $L^2_{loc}(H^1)$ sense.

Let us prove that this convergence is actually uniform. To this aim, observe that $w_n$ is a weak solution of
\begin{equation*}
-\partial_t w_n = \nu \Delta w_n -\kappa(\rho(\cdot+t_n,\cdot)) z_n\cdot \nabla w_n +(\kappa(0)-\kappa(\rho(\cdot+t_n,\cdot)))\vert \nabla \Psi\vert,
\end{equation*}
where $z_n := \frac{\nabla \varphi_n + \nabla \Psi}{\vert \nabla \varphi_n\vert + \vert \nabla \Psi\vert}$ is bounded. Then, for every $t_1, t_2$ such that $t_2 + 1 < t_1 < t_2 + 2$, using Corollary~\ref{reg HJB}, we find that
$$
\norm{ w_n(t_2,\cdot)}_ {L ^\infty} \leq C\left(\norm{w_n(t_1,\cdot)}_ {L ^2} + \norm{ (\kappa(0)-\kappa(\rho(\cdot+t_n,\cdot)))\abs{\nabla \Psi}  }_ {L^\infty((t_1,t_2)\times \O))}\right). 
$$
Integrating this for $t_1\in (t_2 + 1, t_2 + 2)$, we find
$$
\norm{ w_n(t_2,\cdot)}_ {L ^\infty} \leq C\left(\norm{w_n}_ {L^2((t_2 + 1, t_2 + 2) \times \Omega)} + \norm{ (\kappa(0)-\kappa(\rho(\cdot+t_n,\cdot)))\abs{\nabla \Psi}  }_ {L^\infty((t_1,t_2)\times \O))}\right). 
$$
Because $w_n$ goes to zero in the $L_{loc}^2(H^1)$ sense and $\vert \nabla \Psi\vert$ is bounded, observing that $\vert\kappa(\rho(\cdot+t_n,\cdot))-\kappa(0)\vert$ converges uniformly to zero (this comes from the uniform convergence to zero of $\rho$ from Step 2), we obtain that $w_n$ goes to zero uniformly, whence
$$
\varphi(t,x)\underset{t\to+\infty}{\longrightarrow} \Psi(x)
$$
in the $L^\infty$ sense.
\end{proof}

\subsection{Improved convergence results}

In the previous section, Theorem~\ref{th long time} proved the existence of solutions $(\rho, \varphi)$ to the MFG system with infinite time horizon \eqref{MFGSystInfiniteHorizon} and characterized the asymptotic behavior of any such solution by providing uniform convergence $\rho_t\to 0$ and $\varphi_t\to \Psi$. We want here to improve this result in two ways: first, we will prove that this convergence is actually exponential (in what concerns $\varphi$ this requires a very small extra assumption on the function $\kappa$); second, we will prove that the convergence of $\varphi(t, \cdot)$ to $\Psi$ as $t \to +\infty$, in addition to being uniform, is also a strong convergence in $H_0^1(\Omega)$. This last result is natural to evoke, because of the role played by $\nabla\varphi$ in the dynamics.

\begin{proposition}
\label{PropExponentialConvergence}
Suppose that the function $\kappa:\R_+\to\R_+$ is H\"older continuous. Then, there exist constants $C,\alpha>0$ (depending on $\kappa$, $\nu$, and $\Omega$), such that we have, for any $(t,x)\in [0,+\infty)\times\Omega$,
$$\abs{\rho(t,x)}+\abs{\varphi(t,x)-\Psi(x)}\leq Ce^{-\alpha t}.$$
\end{proposition}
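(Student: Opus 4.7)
The plan is to upgrade the exponential $L^{p_\theta}$ decay of $\rho$ already obtained in the proof of Theorem~\ref{th long time} to an exponential $L^\infty$ decay via local parabolic regularity, and then to exploit the Hölder continuity of $\kappa$ together with a simple sub/super-solution argument for the Hamilton--Jacobi--Bellman equation in order to transfer this decay to $\varphi-\Psi$. The key algebraic observation is that the stationary equation $-\nu\Delta\Psi+\kappa(0)\abs{\nabla\Psi}=1$ and the evolution equation for $\varphi$ differ only by replacing $\kappa(0)$ with $\kappa(\rho)$, so that adding a well-chosen time-dependent constant to $\Psi$ will produce a super- or a sub-solution of the HJB equation whose defect is entirely controlled by $g(t,x):=(\kappa(\rho(t,x))-\kappa(0))\abs{\nabla\Psi(x)}$.

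\textbf{Step 1 (decay of $\rho$).} The proof of Theorem~\ref{th long time} already gives $\norm{\rho(t,\cdot)}_{L^{p_\theta}}\leq A e^{-\alpha_0 t}$ for some $p_\theta>1$ and $\alpha_0>0$. Applying the local $L^\infty$ parabolic regularity for the Fokker--Planck equation with bounded drift (Corollary~\ref{reg FP}) on the unit cylinder $(t-1,t)\times\Omega$, whose constant is translation-invariant in $t$, yields $\norm{\rho(t,\cdot)}_{L^\infty}\leq C\norm{\rho}_{L^{p_\theta}((t-1,t)\times\Omega)}\leq C'e^{-\alpha_0 t}$ for $t\geq 1$. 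By Hölder continuity of $\kappa$ with exponent $\gamma\in(0,1]$ and boundedness of $\abs{\nabla\Psi}$ on $\bar\Omega$ (which follows from $\Psi\in C^2(\bar\Omega)$), this implies $\norm{g(t,\cdot)}_{L^\infty}\leq C_g e^{-\alpha_1 t}$ with $\alpha_1=\gamma\alpha_0$.

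\textbf{Step 2 (decay of $\varphi-\Psi$).} For any $T>0$ set
\[
A_T(t):=\norm{\varphi(T,\cdot)-\Psi}_{L^\infty}+\int_t^T\norm{g(\tau,\cdot)}_{L^\infty}\diff\tau,
\]
and consider $\overline\varphi:=\Psi+A_T$ and $\underline\varphi:=\Psi-A_T$. Using $\partial_t\overline\varphi=-\norm{g(t,\cdot)}_{L^\infty}$, $\nabla\overline\varphi=\nabla\Psi$, and the stationary equation for $\Psi$, a direct computation gives
\[
-\partial_t\overline\varphi-\nu\Delta\overline\varphi+\kappa(\rho)\abs{\nabla\overline\varphi}-1=\norm{g(t,\cdot)}_{L^\infty}+g(t,x)\geq 0,
\]
so that $\overline\varphi$ is a super-solution of \eqref{HJBEqn} with $K=\kappa(\rho)$; the analogous computation shows that $\underline\varphi$ is a sub-solution. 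On $[0,T]\times\partial\Omega$ we have $\overline\varphi=A_T\geq 0=\varphi\geq -A_T=\underline\varphi$ and, by construction, $\overline\varphi(T,\cdot)\geq\varphi(T,\cdot)\geq\underline\varphi(T,\cdot)$ on $\Omega$. The parabolic comparison principle for \eqref{HJBEqn} (Proposition~\ref{prop cp}, extended from solutions to sub/super-solutions as in \cite{Feo2014Remark}) therefore yields $\abs{\varphi(t,x)-\Psi(x)}\leq A_T(t)$ on $[0,T]\times\Omega$. Letting $T\to+\infty$ and using $\norm{\varphi(T,\cdot)-\Psi}_{L^\infty}\to 0$ from Theorem~\ref{th long time}, together with the bound on $g$ from Step~1, one gets
\[
\abs{\varphi(t,x)-\Psi(x)}\leq\int_t^\infty\norm{g(\tau,\cdot)}_{L^\infty}\diff\tau\leq\frac{C_g}{\alpha_1}e^{-\alpha_1 t},
\]
and the conclusion of the proposition follows with $\alpha=\min(\alpha_0,\alpha_1)$ (in fact $\alpha_1$).

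The main obstacle is Step~1: one needs the constant in Corollary~\ref{reg FP} to be independent of $t$ (that is, translation-invariant on cylinders of fixed height) so that the exponential rate $\alpha_0$ of the $L^{p_\theta}$ bound is transferred without loss to the $L^\infty$ bound, which is exactly what the Moser iteration carried out in the appendix provides. Step~2 is then essentially the observation that the nonlinear HJB operator, applied to any function of the form $\Psi+A(t)$, reduces to an expression that differs from the stationary equation for $\Psi$ only by the source term $g$ and the time-derivative $A^\prime(t)$, so the comparison principle transports exponential integrability of $g$ in time directly into exponential decay of $\abs{\varphi-\Psi}$.
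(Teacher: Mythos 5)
Your proof is correct, and it follows the same overall strategy as the paper: exponential $L^p$ decay of $\rho$ from Theorem~\ref{th long time}, upgraded to $L^\infty$ via Corollary~\ref{reg FP}, then transferred to $K(t,\cdot)=\kappa(\rho(t,\cdot))\to\kappa(0)$ by the H\"older hypothesis on $\kappa$, and finally propagated to $\varphi-\Psi$ by a sub/super-solution sandwich around $\Psi$ together with a comparison argument. The one place where you genuinely diverge is the construction of the barriers. The paper perturbs $\Psi$ multiplicatively and by an additive constant, $\Psi_\pm:=a_\pm\Psi\pm e^{-\alpha t_1}$ with $a_\pm=1\pm 3C\norm{\nabla\Psi}_{L^\infty}e^{-\alpha t_1}$, then \emph{linearizes} the difference $v_\pm=\varphi-\Psi_\pm$ (introducing a bounded vector field $w_\pm$) and invokes the linear parabolic maximum principle of Aronson, for which the paper already has a citation it relies on elsewhere. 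You instead use the cleaner additive, time-dependent barrier $\Psi\pm A_T(t)$ with $A_T(t)=\norm{\varphi(T,\cdot)-\Psi}_{L^\infty}+\int_t^T\norm{g(\tau,\cdot)}_{L^\infty}\diff\tau$, which makes the defect $\norm{g(t,\cdot)}_{L^\infty}\pm g(t,x)$ manifestly signed and avoids the multiplicative factor and linearization entirely. The price is that you need the comparison principle for \emph{sub/super-solutions} of the nonlinear HJB equation, whereas the paper's Proposition~\ref{prop cp} is stated only for solutions; you correctly flag this as a (standard) extension drawn from \cite{Feo2014Remark}, so it is not a gap, but it is the one ingredient your argument relies on that the paper does not explicitly provide. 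A minor notational slip in your Step 1: Corollary~\ref{reg FP} controls $\norm{\rho(t,\cdot)}_{L^\infty}$ by $\norm{\rho(t-1,\cdot)}_{L^{p_\theta}}$ (a single-time norm), not by a space-time $L^{p_\theta}$ norm over the cylinder, though this does not affect the conclusion.
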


\begin{proof}
The exponential convergence of $\rho$ to $0$ is indeed part of the proof of Theorem~\ref{th long time}, since we proved that, for $p$ close to $1$, the $L^p$ norm of $\rho_t$ tends exponentially to $0$, and we then used the parabolic regularization estimate $\norm*{ \rho_t}_{L^\infty}\leq C\norm*{ \rho_{t-1}}_{L^p}$.

Thanks to the assumption that $\kappa$ is H\"older continuous, up to modifying the coefficient in the exponent, we obtain $\abs{K(t,x)-\kappa(0)}\leq Ce^{-\alpha t}$, where $K(t,x)=\kappa(\rho(t,x))$. 

We need now to discuss the exponential convergence of $\varphi$. Let us fix a time $t_1$ and define 
$$a_\pm:=1\pm 3C\norm*{\nabla\psi}_{L^\infty}e^{-\alpha t_1},\quad \Psi_{\pm}:=a_\pm \Psi\pm e^{-\alpha t_1}.$$
We will use a comparison principle between $\varphi$ and $\Psi_\pm$. The functions $\Psi_\pm$ solve
$$-\partial_t \Psi_\pm -\nu\Delta\Psi_\pm+\kappa(0)|\nabla\Psi_\pm|-a_\pm=0,$$
where the time-derivative term is actually $0$ since they are functions of the $x$ variable only.
If we set $v_\pm:=\varphi-\Psi_\pm$, the functions $v_\pm$ solve a linear PDE of the form
$$-\partial_t v_\pm -\nu\Delta v_\pm+w_\pm\cdot\nabla v_\pm \pm 3C\norm*{\nabla\psi}_{L^\infty}e^{-\alpha t_1}+(K(t,x)-\kappa(0))a_\pm|\nabla\Psi|=0,$$
where the vector fields $w_\pm$ are such that $|w_\pm(t,x)|\leq K(t,x)$.
In particular, if we note that, for $t_1$ large enough, we have $0\leq a_\pm\leq 2$, we have $(K(t,x)-\kappa(0))a_\pm|\nabla\Psi|\leq 2C\norm*{\nabla\psi}_{L^\infty}e^{-\alpha t_1}$. Hence, for $v_+$ we have
$$-\partial_t v_+ -\nu\Delta v_++w_+\cdot\nabla v_+< 0$$
and for $v_-$
$$-\partial_t v_- -\nu\Delta v_-+w_-\cdot\nabla v_-> 0.$$
Let us look now at the boundary conditions of $v_\pm$ relative to the parabolic domain $[t_1,t_2]\times \Omega$. If $t_2$ is large enough, using the uniform convergence $\varphi_t\to \Psi$, we can infer $v_+(t_2,x)<0$ for every $x\in\Omega$. Moreover, we also have $v_+(t,x)<0$ for every $t$ and every $x\in\partial\Omega$. The inequalities are opposite for $v_-$, i.e. we have $v_-(t,x)>0$ for $t=t_2$ or $x\in \partial\Omega$. This implies, by the maximum principle in \cite{Aronson1967Local} (see \cite[Theorem~1]{Aronson1967Local}, adapted to this backward equation, and using again the version with the inequality presented at the end of the proof, page~98), the inequalities $v_+(t_1,x)\leq 0\leq v_-(t_1,x)$, i.e. 
$$(1- 3C\norm*{\nabla\psi}_{L^\infty}e^{-\alpha t_1})\Psi- e^{-\alpha t_1}\leq \varphi\leq (1+ 3C\norm*{\nabla\psi}_{L^\infty}e^{-\alpha t_1})\Psi+ e^{-\alpha t_1}.$$
This shows $\norm*{\varphi_t-\Psi}_{L^\infty}\leq C e^{-\alpha t_1}$, for a new constant $C$.
\end{proof}

We can now pass to the following statement, which proves the convergence of the gradient of $\varphi$.
\begin{proposition}
\label{prop:VarphiConvergesInH1}
Let $(\rho, \varphi)$ be a solution to the Mean Field Game system with infinite time horizon \eqref{MFGSystInfiniteHorizon}. Then
\[
\varphi(t, \cdot) \underset{t \to +\infty}{\longrightarrow} \Psi
\]
in $H_0^1(\Omega)$.
\end{proposition}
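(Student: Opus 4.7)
Let $w(t,x) := \varphi(t,x) - \Psi(x)$, which belongs to $H_0^1(\Omega)$ for every $t$ and, by subtracting the respective Hamilton--Jacobi--Bellman equations, satisfies the linear backward equation
\[ -\partial_t w - \nu \Delta w = f, \qquad f := \kappa(0)\abs{\nabla\Psi} - \kappa(\rho)\abs{\nabla\varphi}, \]
with zero Dirichlet boundary conditions. Two pieces of information are already at our disposal: by Theorem~\ref{th long time}, $\norm{w(t,\cdot)}_{L^\infty} \to 0$ as $t \to +\infty$ (and hence $\norm{\kappa(0) - \kappa(\rho(t,\cdot))}_{L^\infty} \to 0$, by continuity of $\kappa$), and by Lemma~\ref{lem est phi}, $\norm{\nabla\varphi(t,\cdot)}_{L^2}$ is uniformly bounded. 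The reverse triangle inequality gives the crucial pointwise bound
\[ \abs{f} \leq \kappa(0)\abs{\nabla w} + \abs{\kappa(0) - \kappa(\rho)}\abs{\nabla\varphi}, \]
so that $\norm{f(t,\cdot)}_{L^2}$ is uniformly bounded in $t$, while the second summand will eventually be small.

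I would first establish averaged convergence. Testing the equation for $w$ against $w$ itself and integrating over $[T,T+1]\times\Omega$, using the Dirichlet condition in the integration by parts, gives
\[ \nu \int_T^{T+1}\norm{\nabla w(s,\cdot)}_{L^2}^2\, ds = \tfrac{1}{2}\bigl[\norm{w(T+1,\cdot)}_{L^2}^2 - \norm{w(T,\cdot)}_{L^2}^2\bigr] + \int_T^{T+1}\!\!\int_\Omega f\,w\, dx\, ds. \]
As $T\to+\infty$ the boundary terms vanish because $\norm{w(t,\cdot)}_{L^\infty}\to 0$ on the bounded domain $\Omega$, and the last term is dominated by $\norm{w}_{L^\infty((T,T+1)\times\Omega)}\cdot\abs{\Omega}^{1/2}\cdot\norm{f}_{L^2((T,T+1)\times\Omega)}$, which tends to $0$ by the uniform $L^2$ bound on $f$. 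Thus $\int_T^{T+1}\norm{\nabla w}_{L^2}^2\,ds \to 0$, and substituting this back into the pointwise inequality for $\abs{f}$ together with $\norm{\kappa(0)-\kappa(\rho(t,\cdot))}_{L^\infty} \to 0$ likewise yields $\int_T^{T+1}\norm{f(s,\cdot)}_{L^2}^2\,ds \to 0$.

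To upgrade this to pointwise-in-time convergence, I would derive a one-sided bound on $\frac{d}{dt}\norm{\nabla w(t,\cdot)}_{L^2}^2$. The regularity $\varphi \in L^2_{\mathrm{loc}}(\mathbb{R}_+;H^2(\Omega))$ from Lemma~\ref{lem est phi} and $\partial_t\varphi \in L^2_{\mathrm{loc}}(\mathbb{R}_+;L^2(\Omega))$ from Proposition~\ref{PropHJB} make the map $t \mapsto \norm{\nabla w(t,\cdot)}_{L^2}^2$ absolutely continuous with
\[ \frac{d}{dt}\norm{\nabla w(t,\cdot)}_{L^2}^2 = -2\int_\Omega \Delta w\,\partial_t w\, dx \quad\text{for a.e.\ } t. \]
Substituting $\partial_t w = -\nu\Delta w - f$ and applying Young's inequality yields
\[ \frac{d}{dt}\norm{\nabla w(t,\cdot)}_{L^2}^2 \geq \nu\norm{\Delta w(t,\cdot)}_{L^2}^2 - \tfrac{1}{\nu}\norm{f(t,\cdot)}_{L^2}^2 \geq -\tfrac{1}{\nu}\norm{f(t,\cdot)}_{L^2}^2. \]
Integrating from $t_1$ to an arbitrary $t_2 \in [t_1, t_1+1]$ gives
\[ \norm{\nabla w(t_1,\cdot)}_{L^2}^2 \leq \norm{\nabla w(t_2,\cdot)}_{L^2}^2 + \tfrac{1}{\nu}\int_{t_1}^{t_1+1}\norm{f(s,\cdot)}_{L^2}^2\,ds, \]
and by a mean value argument I can pick $t_2 \in [t_1, t_1+1]$ such that $\norm{\nabla w(t_2,\cdot)}_{L^2}^2 \leq \int_{t_1}^{t_1+1}\norm{\nabla w(s,\cdot)}_{L^2}^2\,ds$. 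Both terms on the right of the displayed inequality then go to $0$ as $t_1\to+\infty$, proving the claim.

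The main obstacle is precisely this last passage from averaged $L^2(H^1)$ convergence to pointwise-in-time $H^1$ convergence. The one-sidedness of the derivative bound is essential: a matching upper bound would involve $\norm{\Delta w}_{L^2}^2$ which we only control in a time-integrated sense via Lemma~\ref{lem est phi}, not pointwise. Fortunately we only need to dominate $\norm{\nabla w(t_1,\cdot)}_{L^2}^2$ from above by its value at a slightly later, well-chosen time where it is small, and this is exactly what the lower bound on $\frac{d}{dt}\norm{\nabla w}_{L^2}^2$ delivers.
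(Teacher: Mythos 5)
Your proof is correct, and it takes a genuinely different route from the paper's. The paper establishes weak $H_0^1$ convergence from the uniform-in-time $H_0^1$ bound plus uniform convergence, then upgrades to strong convergence by showing $\norm{\varphi(t,\cdot)}_{H_0^1}\to\norm{\Psi}_{H_0^1}$ via a contradiction argument: assuming a sequence of times where the norm stays strictly above $\norm{\Psi}_{H_0^1}$, the paper uses the differential inequality \eqref{a int} on $u(t)=\tfrac12\norm{\nabla\varphi(t)}_{L^2}^2$ to propagate this to a small interval, then extracts a subsequence bounded in $H^2$ (via the $L^2_{\mathrm{loc}}(H^2)$ estimate of Lemma~\ref{lem est phi}) converging strongly in $H_0^1$, contradicting the assumed lower bound. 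You instead work directly with $w=\varphi-\Psi$, exhibit the linear forced backward heat equation it satisfies, prove averaged smallness $\int_T^{T+1}\norm{\nabla w}_{L^2}^2\to 0$ by a clean energy estimate, and then upgrade to pointwise-in-time smallness via the one-sided bound $\frac{d}{dt}\norm{\nabla w}_{L^2}^2\geq-\tfrac1\nu\norm{f}_{L^2}^2$ and a mean-value choice of comparison time. Both proofs hinge on a lower bound for the time derivative of an $H^1$ seminorm, but yours avoids the contradiction structure and the $H^2$-compactness step, yielding a direct and arguably cleaner argument; the paper's, on the other hand, recycles the differential inequality already derived in Lemma~\ref{lem est phi} rather than performing a fresh energy computation on $w$. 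One small remark: the identity $\frac{d}{dt}\norm{\nabla w(t)}_{L^2}^2=-2\int_\Omega\Delta w\,\partial_t w$ requires the regularity $w\in L^2_{\mathrm{loc}}(H^2\cap H_0^1)$, $\partial_t w\in L^2_{\mathrm{loc}}(L^2)$, which you correctly invoke from Lemma~\ref{lem est phi} and Proposition~\ref{PropHJB}; the justification is the standard interpolation/duality lemma for the Gelfand triple $H^2\cap H_0^1\subset H_0^1\subset L^2$, which is exactly the same formal computation the paper itself uses in Step~2 of Lemma~\ref{lem est phi}, so this is consistent with the paper's level of rigor.
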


\begin{proof}
We first observe that, by Lemma~\ref{lem est phi}, the family of functions $(\varphi(t, \cdot))_{t \geq 0}$ is bounded in $H_0^1(\Omega)$. This, together with the uniform convergence to $\Psi$, implies that one has the weak convergence $\varphi(t, \cdot) \xrightharpoonup{} \Psi$ in $H_0^1(\Omega)$ as $t \to +\infty$. 

In order to conclude the proof, it suffices to show $\norm{\varphi(t,\cdot)}_{H_0^1(\Omega)} \to \norm{\Psi}_{H_0^1(\Omega)}$ as $t \to +\infty$. Since $\varphi(t,\cdot) \xrightharpoonup{} \Psi$ in $H_0^1(\Omega)$ as $t \to \infty$, one has
\[\norm{\Psi}_{H_0^1(\Omega)}^2 \leq \liminf_{t \to +\infty} \norm{\varphi(t,\cdot)}_{H_0^1(\Omega)}^2.\]
Let us argue by contradiction and assume that there exists $\varepsilon > 0$ and an increasing sequence $(s_n)_{n \in \mathbb N}$ with $s_n \to +\infty$ as $n \to +\infty$ such that
\begin{equation}
\label{eq:LowerBoundPhiSn}
\norm*{\varphi(s_n)}_{H_0^1(\Omega)}^2 \geq \norm{\Psi}_{H_0^1(\Omega)}^2 + 2 \varepsilon
\end{equation}
for every $n \in \mathbb N$. Recall that, from \eqref{a int} in the proof of Lemma~\ref{lem est phi}, there exists $C > 0$ depending only on $\sup\varphi$, $\sup\kappa$, $\nu$, and $\abs{\Omega}$ such that
\[
\norm{\varphi(t,\cdot)}_{H_0^1(\Omega)}^2 \geq \norm{\varphi(s_n)}_{H_0^1(\Omega)}^2 e^{- C (t - s_n)} - C (1 - e^{-C(t - s_n)})
\]
for every $n \in \mathbb N$ and $t \in (s_n, s_n + 1)$. Combining this with \eqref{eq:LowerBoundPhiSn}, one obtains that there exists $\delta \in (0, 1)$ depending only on $\norm{\Psi}_{H_0^1(\Omega)}$, $\varepsilon$, and $C$ such that
\begin{equation}
\label{eq:LowerBoundPhiT}
\norm*{\varphi(t,\cdot)}_{H_0^1(\Omega)}^2 \geq \norm{\Psi}_{H_0^1(\Omega)}^2 + \varepsilon
\end{equation}
for every $n \in \mathbb N$ and $t \in (s_n, s_n + \delta)$. By Lemma~\ref{lem est phi}, there exists a constant $C^\prime > 0$ depending only on $\sup\varphi$, $\sup\kappa$, $\nu$, $\abs{\Omega}$, and $\Psi$ such that
\[
\norm{\varphi}_{L^2((s_n, s_n + \delta); H^2(\Omega))}^2 \leq C^\prime \qquad \text{ for every } n \in \mathbb N.
\]
In particular, for every $n \in \mathbb N$, there exists $t_n \subset (s_n, s_n + \delta)$ such that $\norm{\varphi(t_n)}_{H^2(\Omega)}^2 \leq C^\prime /\delta$. Hence $(\varphi(t_n))_{n \in \mathbb N}$ is bounded in $H^2(\Omega)$ and thus, up to extracting subsequences (which we still denote by $(s_n)_{n \in \mathbb N}$ and $(t_n)_{n \in \mathbb N}$ for simplicity), $(\varphi(t_n))_{n \in \mathbb N}$ converges strongly in $H_0^1(\Omega)$ as $n \to \infty$. Since $\varphi(t,\cdot) \xrightharpoonup{} \Psi$ as $t \to +\infty$, the strong limit of $(\varphi(t_n))_{n \in \mathbb N}$ in $H_0^1(\Omega)$ is necessarily $\Psi$, and thus, in particular, $\norm{\varphi(t_n)}_{H_0^1(\Omega)}^2 \to \norm{\Psi}_{H_0^1(\Omega)}^2$ as $n \to +\infty$. This, however, contradicts  \eqref{eq:LowerBoundPhiT}, and establishes the desired result.
\end{proof}

\appendix

\section{Regularizing effects of parabolic equations}
\label{AnnexeA}

This appendix is concerned with the regularizing properties of a class of parabolic equations including both the Fokker--Planck and the Hamilton--Jacobi--Bellman equations we consider in this paper. More precisely, we consider the increase of the exponent $p$ of the $L^p$ integrability in space of the solution of the system. As recalled in the introduction, the computations and results presented here are very similar to those from the appendix of \cite{Cardaliaguet2013Long1}, the main difference lying in the boundary condition. The main result of this appendix is the following.

\begin{proposition}\label{prop L infty appendix}
Let $T\in (0,+\infty]$. Let $V, F, f, g, u \in C^\infty((0,T)\times \O)$ with $V, g \in L^\infty((0, T) \times \O)$, $u \geq 0$, $u=0$ on $\partial \O$, such that
\begin{equation}\label{general parabolic}
\partial_t u - \nu \Delta u + \nabla \cdot (u V) + \nabla \cdot F + f + g \cdot \nabla u \leq 0,\quad \text{on} \quad (0,T) \times \Omega.
\end{equation}
Then, for every $p>1$, every number $a \in (0, 1)$ and $t_1,t_2$ such that $0<t_1<t_2<T$ and $a<\abs{t_1-t_2}<a^{-1}$, there is $C>0$, depending only on $p,a, \norm{V}_{L^\infty}, \norm{g}_{L^{\infty}}$ such that
$$
\norm{u(t_2,\cdot)}_{L^{\infty}} \leq C \left( \norm{u(t_1,\cdot)}_{L^p} + \norm{F}_{L^\infty((t_1,t_2)\times\O)}
+ \norm{f}_{L^\infty((t_1,t_2)\times\O)}\right).
$$
The same result is true omitting the assumption $u\geq 0$ if the PDE \eqref{general parabolic} is satisfied as an equality instead of an inequality.
\end{proposition}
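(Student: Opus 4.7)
The overall plan is a classical Moser $L^p\to L^\infty$ iteration, adapted to a parabolic inequality with $L^\infty$ drifts and the mixed sources $F$ and $f$. The first step is, for $q\geq p$, to multiply \eqref{general parabolic} by $u^{q-1}$ (non-negative by hypothesis) and integrate over $\Omega$. The zero Dirichlet condition on $u$ justifies integration by parts and turns the $-\nu\Delta u$ term into the coercive quantity $\frac{4\nu(q-1)}{q^2}\int_\Omega\abs{\nabla u^{q/2}}^2\diff x$. After integration by parts, the drift contributions from $\nabla\cdot(uV)$ and $g\cdot\nabla u$ become cross-terms of the form $\int u^{q/2}\,W\cdot\nabla u^{q/2}\,\diff x$ with $W\in\{V,g\}$, while the source terms $\nabla\cdot F$ and $f$ give $\int u^{q/2-1}F\cdot\nabla u^{q/2}\,\diff x$ and $\int f\,u^{q-1}\,\diff x$. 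All four are controlled by Young's inequality with a small parameter, absorbing a fraction of the good gradient and leaving an energy inequality of schematic form
\[
\frac{\diff}{\diff t}\int_\Omega u^q\diff x + \frac{c\nu}{q}\int_\Omega\abs{\nabla u^{q/2}}^2\diff x \leq Cq\int_\Omega u^q\diff x + Cq\abs{\Omega}\bigl(\norm{F}_{L^\infty}^q+\norm{f}_{L^\infty}^q\bigr),
\]
with $c,C$ depending only on $\nu$, $\norm{V}_{L^\infty}$, $\norm{g}_{L^\infty}$.

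The second step combines this with a parabolic Sobolev--Gagliardo--Nirenberg inequality applied to $v=u^{q/2}$, which yields $\norm{v}_{L^{2\gamma}((\sigma,t_2)\times\Omega)}^{2\gamma}\leq C\bigl(\sup_{t\in(\sigma,t_2)}\norm{v(t)}_{L^2}^2 + \norm{\nabla v}_{L^2((\sigma,t_2)\times\Omega)}^2\bigr)^{\gamma}$ with $\gamma=1+2/d>1$. Integrating the energy inequality on $(\sigma,t_2)$ and running Grönwall on $\int u^q$ bounds both $\sup_t\int u^q$ and $\iint\abs{\nabla u^{q/2}}^2$ in terms of $\int u^q$ at some earlier time $\sigma^\prime<\sigma$; feeding this into the Sobolev estimate upgrades an $L^q$-in-space bound at time $\sigma^\prime$ to an $L^{q\gamma}$-in-space-time bound on $(\sigma,t_2)$. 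To localize in time I would multiply by a smooth cutoff $\eta(t)$ vanishing near $\sigma^\prime$ and equal to $1$ on $[\sigma,t_2]$, which costs only a factor polynomial in $(\sigma-\sigma^\prime)^{-1}$.

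The third step is to iterate along $q_k=p\gamma^k$ and a sequence $t_1<\sigma_0<\sigma_1<\cdots\nearrow t_2$ with $\sigma_{k+1}-\sigma_k\sim(t_2-t_1)/2^{k+1}$, producing a recurrence
\[
\norm{u}_{L^\infty((\sigma_{k+1},t_2);L^{q_{k+1}})} \leq C_k^{1/q_{k+1}}\bigl(\norm{u}_{L^\infty((\sigma_k,t_2);L^{q_k})} + \norm{F}_{L^\infty} + \norm{f}_{L^\infty}\bigr),
\]
with $C_k$ polynomial in $q_k$ and in $2^k$. The constraints $a<\abs{t_2-t_1}<a^{-1}$ enter here precisely to keep $\sigma_{k+1}-\sigma_k$ from vanishing too fast and to bound the total interval length; they ensure that the product $\prod_k C_k^{1/q_{k+1}}$ converges (via $\sum_k k/\gamma^k<\infty$) and telescopes into a finite constant. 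Passing to the limit $k\to\infty$ yields the stated $L^\infty$ bound at $t_2$. For the last sentence of the proposition (no sign assumption, equality instead of inequality), I would repeat the computation with $\abs{u}^{q-2}u$ in place of $u^{q-1}$: the integration by parts still produces $\int\abs{\nabla\abs{u}^{q/2}}^2$ up to the same constants, and the rest of the iteration is unchanged. The main obstacle is the bookkeeping: one must track precisely how $C_k$ depends on $q_k$ (typically $\sim q_k^2$) and on $(\sigma_{k+1}-\sigma_k)^{-1}\sim 2^k$, and check convergence of the telescoped product; this is standard but tedious, and is exactly the point where the lower bound on $\abs{t_1-t_2}$ becomes essential.
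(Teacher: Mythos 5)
Your proposal is a correct Moser iteration and it proves the statement, but it follows a genuinely different variant of the argument than the paper does. Both begin identically: multiply by $u^{q-1}$, integrate by parts using the Dirichlet condition, and absorb the drift and source cross-terms by Young's inequality into the coercive term $\int_\Omega\abs{\nabla u^{q/2}}^2$, arriving at an energy differential inequality for $\int_\Omega u^q$. The divergence comes in how the gain of integrability is produced. You invoke the \emph{parabolic} Sobolev embedding $L^\infty_t L^2_x\cap L^2_t H^1_x\hookrightarrow L^{2\gamma}_{t,x}$ with $\gamma=1+2/d$, localize in time with an explicit smooth cutoff $\eta(t)$, and iterate $q_k=p\gamma^k$ on a shrinking sequence of intervals $(\sigma_k,t_2)$; this is the textbook parabolic De Giorgi--Nash--Moser scheme. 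The paper instead uses only the \emph{spatial} Gagliardo--Nirenberg--Sobolev inequality, $\norm{u^{q/2}}_{L^{2\alpha}(\Omega)}\lesssim\norm{\nabla u^{q/2}}_{L^2(\Omega)}$ with $\alpha=d/(d-2)$, applied inside the ODE for $m_k(t)=\int_\Omega u^k$; the role of your cutoff is played by an exponential integrating factor combined with a two-stage time integration (Steps 2--3 there), which yields a pointwise-in-time recurrence $m_{\alpha k}^{1/(\alpha k)}(t_2)\lesssim(m_k(t_1)+M^k)^{1/k}$ that is then telescoped with $k_n=\alpha^n p$, $s_n=t_2-(t_2-t_1)/(2\alpha)^n$. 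Your route is somewhat more standard and modular (one can cite the parabolic Sobolev inequality as a black box); the paper's route avoids time cutoffs entirely and keeps the estimate purely as a statement about spatial $L^k$ norms at sharp times, which makes the dependence on $(t_2-t_1)$ transparent. Both correctly identify that $a<\abs{t_2-t_1}<a^{-1}$ is what makes the telescoped product finite. Two small points worth tightening in yours: the schematic energy inequality should carry coefficient $c\nu$ (not $c\nu/q$) on the gradient once you multiply through by $q$, and you should note, as the paper does, that for $d=1,2$ one replaces $\gamma$ (resp.\ $\alpha$) by an arbitrary fixed exponent $>1$. Your treatment of the sign-free case via $\abs{u}^{q-2}u$ under the equality hypothesis is correct and is more explicit than the paper's.
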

The proof follows a standard method based on Moser's iterations that will be detailed here. This appendix is included for completeness: the experienced reader will recognize well-known computations, which are simplified in this setting thanks in particular to the Dirichlet boundary conditions we use.
\begin{proof}
Let $u$ be as in the proposition. For $k>1$, we define
$$
m_k(t) := \int_{\Omega} u^k(t,x)\diff x.
$$
We also define $\alpha := \frac{2^\star}{2} = \frac{n}{n-2}$ if $n>2$ (here $2^\star$ is the Sobolev exponent in dimension $n$). When $n=1, 2$ we set $\alpha := 2$ (but any number larger than $1$ and smaller than $+\infty$ could be used here). Moreover, we set
$$
M := \norm{F}_{L^\infty((t_1,t_2)\times\O)}+ \norm{f}_{L^\infty((t_1,t_2)\times\O)}
$$

\step{1}{$L^p$ estimates}

Let us start with proving that, for $k_0>1$, there is $C>0$ depending on $k_0$ and on the $L^\infty$ norms of $V,g$, such that, for every $k>k_0>1$,
\begin{equation}\label{eq m}
\frac{\diff}{\diff t}(m_k e^{-Ck^2 t}) +  \frac{1}{C} m_{\alpha k}^{\frac{1}{\alpha}} e^{-Ck^2 t} \leq Ce^{-Ck^2 t} k^2 M^k.
\end{equation}
In order to do so, we differentiate $m_k$ with respect to $t$, to get
\begin{multline*}
m_k^\prime(t) \leq k\int_{\O}(\nu\Delta u - \nabla\cdot(u V)-g\cdot \nabla u - \nabla \cdot F - f ) u^{k-1} \\
 \leq - k(k-1) \nu \int_{\O} \vert \nabla u \vert^2 u^{k-2} + k(k-1) \int_{\O}(V\cdot \nabla u) u^{k-1} - k\int_\O (g \cdot \nabla u) u^{k-1} \\ + k(k-1)\int_{\O} (F\cdot \nabla u) u^{k-2} - k\int_{\O}f u^{k-1}.
\end{multline*}
Now, owing to Young's inequality, we can find $C_1,C_2,C_3>0$ depending only on  $\norm{V}_{L^\infty}$, $\norm{g}_{L^{\infty}}$, $k_0$, $\nu$ such that
$$
m_k^\prime(t) \leq 
  - C_1 k^2\int_{\O} \vert \nabla u \vert^2 u^{k-2} + C_2k^2 \int_{\O}u^{k} + C_3 k^2 \int_{\O} \vert F\vert^2  u^{k-2} + k \int_{\O}\vert f \vert u^{k-1}
$$
(note that we replaced the coefficient $k(k-1)$ with $k^2$, as these two numbers are equivalent up to multiplicative constants as far as $k > k_0>1$).
Moreover, thanks again to a Young inequality, we have
$$
 \vert F\vert^2  u^{k-2} \leq \frac{2}{k} \vert F\vert^k  + \frac{k-2}{k}   u^{k} \quad\text{ and }\quad  \vert f\vert u^{k-1} \leq \frac{1}{k}\vert f \vert^k + \frac{k-1}{k}u^{k}. 
$$
Therefore, up to increasing $C_2,C_3$, we get
\begin{multline*}
m_k^\prime(t) \leq 
  - C_1 k^2\int_{\O} \vert \nabla u \vert^2 u^{k-2} + C_2k^2 \int_{\O}u^{k} + C_3 k^2 \int_{\O} \vert F\vert^k  + k \int_{\O}\vert f \vert^k \\
  \leq 
  - C_1 k^2\int_{\O} \vert \nabla u \vert^2 u^{k-2} + C_2 k^2 \int_{\O}u^{k} + C_3 k^2 M^k.
\end{multline*}
Now, owing to the Gagliardo--Nirenberg--Sobolev inequality, we have, for some $C_4>0$,
$$
 k^2\int_{\O} \vert \nabla u \vert^2 u^{k-2} = 4\int_{\O}\vert\nabla( u^\frac{k}{2} )\vert^2 \geq C_4 \left(\int_{\O}u^{k\alpha}\right)^{\frac{1}{\alpha}} = C_4 m_{k\alpha}^\frac{1}{\alpha}.
$$
Hence
$$
m_k^\prime(t) + C_1C_4 m_{\alpha k}^{\frac{1}{\alpha}}\leq 
   C_2k^2 m_k +C_3 k^2 M^k.
$$
Let us denote $C:= \max\{\frac{1}{C_1C_4},C_2,C_3\}$. Then, the above equation gives us
$$
m_k^\prime(t) -Ck^2 m_k + \frac{1}{C} m_{\alpha k}^{\frac{1}{\alpha}}\leq C k^2 M^k,
$$
which we can rewrite in order to get \eqref{eq m}.

\step{2}{Estimates on $m_{\alpha k}$}

We show in this step that, for $k> k_0>1$ and for $0<t_1<t_2<T$, we have
\begin{equation}\label{step 2}
m_{\alpha k}^{\frac{1}{\alpha}}(t_2) \leq  e^{C\alpha k^2(t_2-t_1)}\frac{1}{t_2-t_1}e^{C k^2t_ 2}\int_{t_1}^{t_2}m_{\alpha k}^{\frac{1}{\alpha}}(s)e^{-Ck^2s}\diff s  + e^{C\alpha k^2(t_2-t_1)}M^k,
\end{equation}
for some $C$ depending on $k_0$ and on the $L^\infty$ norms of $V,g$.

The relation \eqref{eq m} provides
$$
\frac{\diff}{\diff t}(m_k e^{-Ck^2 t})  \leq Ce^{-Ck^2 t} k^2 M^k.
$$
Let us take $s \in (t_1,t_2)$. We integrate the above inequality for $t\in (s,t_2)$ to get:
$$
m_k(t_2) e^{-Ck^2  t_2}  \leq m_k(s) e^{-Ck^2  s} + CM^k\int_s^{t_2} e^{-Ck^2 t}\diff t\leq m_k(s) e^{-Ck^2  s} + M^k  e^{-C k^2 s}.
$$
Taking the power $\frac{1}{\alpha}<1$ and using its subadditivity yields
$$
m_k^{\frac{1}{\alpha}}(t_2) e^{-\frac{C}{\alpha}k^2  t_2}  \leq  m_k^{\frac{1}{\alpha}}(s)e^{-\frac{C}{\alpha}k^2 s}   + M^{\frac k\alpha}e^{-\frac{C}{\alpha}k^2  s} .
$$
We replace $k$ by $\alpha k$ so as to re-write the above inequality as
$$
m_{\alpha k}^{\frac{1}{\alpha}}(t_2) e^{-C\alpha k^2  t_2}  \leq  m_{\alpha k}^{\frac{1}{\alpha}}(s)e^{-C\alpha k^2 s}   + M^ke^{-C\alpha k^2  s} .
$$
We multiply by $e^{Ck^2s(\alpha-1)}$ and integrate this for $s  \in (t_1,t_2)$ in order to obtain
$$
m_{\alpha k}^{\frac{1}{\alpha}}(t_2) \int_{t_1}^{t_2}e^{C\alpha k^2  (s-t_2)}e^{-Ck^2s}\diff s  \leq  \int_{t_1}^{t_2}m_{\alpha k}^{\frac{1}{\alpha}}(s)e^{-Ck^2 s}\diff s   + M^k\int_{t_1}^{t_2}e^{-Ck^2  s} \diff s .
$$
We then use $e^{C\alpha k^2  (s-t_2)}\geq e^{C\alpha k^2  (t_1-t_2)}$ in order to obtain
$$
m_{\alpha k}^{\frac{1}{\alpha}}(t_2)   \leq e^{C\alpha k^2  (t_2-t_1)} \left(\int_{t_1}^{t_2}e^{-Ck^2s}\diff s\right)^{-1}\int_{t_1}^{t_2}m_{\alpha k}^{\frac{1}{\alpha}}(s)e^{-Ck^2 s}\diff s   + e^{C\alpha k^2  (t_2-t_1)}M^k
$$
and finally we use $\int_{t_1}^{t_2}e^{-Ck^2s}\diff s\geq (t_2-t_1)e^{-Ck^2t_2}$, which provides the desired inequality.

\step{3}{Higher integrability estimates}

Let us now show that, for $k>k_0$, there is $C>0$ (depending on the same quantities as in the previous steps), such that
\begin{equation}\label{step 3}
m_{\alpha k}^{\frac{1}{\alpha k}}(t_2)  \leq  \frac{e^{Ck(t_2-t_1)}}{(C^{-1}|t_2-t_1|)^{1/k}}\left(m_k(t_1)  +   M^k\right)^{\frac{1}{k}}.
\end{equation}
First of all, integrating \eqref{eq m} for $t\in (t_1,t_2)$, and discharging the final value $m_k(t_2)e^{-Ct_2}$, we obtain
$$
\frac{1}{C}\int_{t_1}^{t_2} m_{\alpha k}^{\frac{1}{\alpha}}(t) e^{-C k^2 t}\diff t \leq m_k(t_1) e^{-Ck^2 t_1} +  M^k\int_{t_1}^{t_2}Ck^2e^{-Ck^2 t}\diff t\leq e^{-Ck^2t_1}\left(M^k+m_k(t_1)\right).
$$
Combining this with \eqref{step 2}, we get
$$
m_{\alpha k}^{\frac{1}{\alpha}}(t_2)  \leq  e^{C\alpha k^2(t_ 2-t_1)}\left(C\frac{e^{Ck^2(t_2-t_1)}}{t_2-t_1}(m_k(t_1)  +M^k) + M^k \right).
$$
Up to enlarging the constant $C$ and using $0 < t_2-t_1<a^{-1}$, we can write the above inequality in a simpler form, i.e.
$$
m_{\alpha k}^{\frac{1}{\alpha}}(t_2)  \leq  \frac{e^{C(\alpha+1)k^2(t_2-t_1)}}{C^{-1}(t_2-t_1)}\left(m_k(t_1)  +   M^k\right),
$$
hence, \eqref{step 3} holds true, after raising to the power $1/k$ and including $\alpha+1$ in the constant $C$.

\step{4}{Iterations}

We conclude the proof in this step by proving that, for $p,t_1,t_2$ as in the statement of the proposition, there is $C>0$ such that
\begin{equation}\label{step 4}
\norm{u(t_2,\cdot)}_{L^{\infty}} \leq C ( \norm{u(t_1,\cdot)}_{L^p} + \norm{F}_{L^\infty}
+ \norm{f}_{L^{\infty}}).
\end{equation}
We denote
$$
s_n := t_2 - \frac{t_2-t_1}{(2\alpha)^n}, \ k_n := \alpha^n p, \ \beta_n := \frac{e^{Ck_n(s_{n+1}-s_n)}}{(C^{-1}(s_{n+1} - s_{n}))^{\frac{1}{k_n}}},
$$
and
$$
a_n := m_{k_n}^{\frac{1}{k_n}}(s_n),\quad \tilde a_n:=\max\{a_n, M\}.
$$
Then, \eqref{step 3} gives us that
$$
a_{n+1} \leq \beta_n (a_n^{k_n} +M^{k_n})^{\frac{1}{k_n}}\leq \beta_n 2^{\frac{1}{k_n}}\tilde a_n.
$$
Hence, up to replacing the constant $C$ with a larger one so as to suppose $\beta_n 2^{\frac{1}{k_n}}\geq 1$, we find 
$$
\tilde a_{n+1} \leq \beta_n 2^{\frac{1}{k_n}}\tilde a_n.
$$
We observe that we have $\prod_{n=0}^{+\infty} \beta_n 2^{\frac{1}{k_n}} < +\infty$ as a consequence of the logarithmic estimate
\[
\sum_{n=0}^{+\infty} \log(\beta_n 2^{\frac{1}{k_n}}) \leq \sum_{n=0}^{+\infty} Ck_n\frac{t_2-t_1}{(2\alpha)^n}+\frac{1}{k_n}(\log 2+n\log(2\alpha)-\log(t_2-t_1)+\log C) < +\infty.
\]
Therefore, we obtain
$$
\max\{ \lim_{n\to +\infty} a_n , M\} \leq \left(\prod_{n=0}^{+\infty} \beta_n 2^{\frac{1}{k_n}}\right)\max\{a_0 , M \}\leq C(a_0+M).
$$
Hence, thanks to $\lim_{n\to +\infty}a_n = \norm{u(t_2,\cdot)}_{L^{\infty}}$, we obtain \eqref{step 4}. This concludes the proof.
\end{proof}

\begin{corollary}\label{reg FP}
Let $T\in (0,+\infty]$. Let $V \in L^{\infty}((0,T)\times \O)$. Let $u\in L^1((0,T)\times\O)$ be a positive distributional solution of
$$
\partial_t u - \nu \Delta u -\nabla\cdot(u V) \leq  0,\quad \text{ on } \ (0,T)\times \O,
$$
satisfying the following mild regularity assumption: $u$ is obtained as a measurable curve $(u_t)_t$ of functions of the $x$ variable, which is such that $t\mapsto\int_\O \eta(x) u_t(x) \diff x$ is continuous in time for every $\eta\in C^\infty(\O)$ (note that we do not restrict to $\eta\in C^\infty_c(\O)$).
Then, for every $p>1$ and $a \in (0, 1)$, there is $C>0$, depending only on $p$, $a$, $\norm{V}_{L^\infty}$, such that we have
$$
\norm{u(t_2,\cdot)}_{L^{\infty}} \leq C  \norm{u(t_1,\cdot)}_{L^p}
$$
for every $0<t_1<t_2<T$ with $a<\abs{t_2-t_1}<a^{-1}$.
\end{corollary}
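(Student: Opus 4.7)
The plan is to reduce the claim to Proposition~\ref{prop L infty appendix} by a regularization and comparison argument, exploiting the linearity (once $V$ is fixed) of the underlying equation. First I would rewrite the inequality as an equality plus a non-positive correction: letting $w$ denote the unique weak solution of the Fokker--Planck equation $\partial_t w - \nu \Delta w - \diverg(wV) = 0$ on $(t_1, t_2) \times \O$ with Dirichlet boundary data on $\partial\O$ and initial datum $w(t_1,\cdot) = u(t_1,\cdot)$ (which exists and is unique by Proposition~\ref{PropFP}, using that $u(t_1,\cdot) \in L^p \subset L^1$), I would show $u \leq w$ on $[t_1, t_2] \times \O$. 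This should follow from a standard parabolic comparison/maximum principle applied to $w - u$, which is a non-negative distributional supersolution of the linear equation, vanishes at $t = t_1$ in a suitable weak sense thanks to the continuity hypothesis on the curve $t \mapsto u_t$, and is non-negative on $\partial\O$ (where $u$ vanishes, implicitly from the Dirichlet setting inherited from the ambient Proposition~\ref{prop L infty appendix}). Hence it suffices to bound $\norm{w(t_2,\cdot)}_{L^\infty}$ in terms of $\norm{u(t_1,\cdot)}_{L^p}$.

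Next I would approximate $w$ by a smooth solution so as to apply Proposition~\ref{prop L infty appendix}. Pick mollified vector fields $V_\varepsilon \in C^\infty$ satisfying $\norm{V_\varepsilon}_{L^\infty} \leq \norm{V}_{L^\infty}$ and $V_\varepsilon \to V$ a.e., together with smooth compactly supported non-negative data $\rho_{0,\varepsilon} \in C^\infty_c(\O)$ with $\rho_{0,\varepsilon} \to u(t_1,\cdot)$ in $L^p(\O)$. Classical parabolic theory (\cite[Chapter~III]{Ladyzhenskaja1968Linear}, together with the maximum principle for positivity and bootstrap for smoothness) yields a smooth non-negative solution $w_\varepsilon$ of the corresponding linear Fokker--Planck equation with Dirichlet conditions. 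Applying Proposition~\ref{prop L infty appendix} with $F = 0$, $f = 0$, $g = 0$ (so that the $L^\infty$ bound on $g$ plays no role) and the remark that the equality version needs no sign assumption yields
\[
\norm{w_\varepsilon(t_2,\cdot)}_{L^\infty} \leq C \norm{\rho_{0,\varepsilon}}_{L^p},
\]
with $C$ depending only on $p$, $a$, and $\norm{V}_{L^\infty}$.

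Finally I would pass to the limit $\varepsilon \to 0$. Arguing exactly as in the proof of Proposition~\ref{PropContinuityFP} (adapted to the fact that both the drift and the initial datum depend on $\varepsilon$, which is a minor modification), the sequence $w_\varepsilon$ converges in $L^1((t_1,t_2) \times \O)$ to the unique weak solution $w$. Up to a subsequence $w_\varepsilon(t_2,\cdot) \to w(t_2,\cdot)$ a.e., so the uniform bound passes to the limit and gives $\norm{w(t_2,\cdot)}_{L^\infty} \leq C \norm{u(t_1,\cdot)}_{L^p}$. Combined with $u \leq w$, this concludes the proof.

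I expect the main obstacle to be the comparison step $u \leq w$: one has to handle the distributional inequality with only $L^1$ integrability and the weak continuity-in-time hypothesis, and in particular ensure that $(w - u)(t_1,\cdot) \geq 0$ makes sense and that the boundary condition is respected. This is precisely the role of the mild regularity assumption in the statement, which allows one to test against $\eta \in C^\infty(\O)$ (not just $C^\infty_c(\O)$) and so to detect the $L^p$ initial trace; the parabolic maximum principle for equations with bounded divergence-form drift (as in \cite{Aronson1967Local}) then closes this step.
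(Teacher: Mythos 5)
Your approach differs from the paper's in an essential structural way. The paper regularizes $u$ \emph{itself}: it extends $u$ by zero to a slightly larger domain $\O^+$, convolves in space and then in time to obtain a smooth $u_{\e,\delta}\geq 0$, and — crucially — rewrites the resulting inequality with a new drift
\[
V_{\e,\delta} := \frac{\chi_\delta\star\eta_\e\star (u V)}{ u_{\e,\delta}},
\]
which remains smooth and has the \emph{same} $L^\infty$ bound as $V$ (a standard trick for curves of measures). This makes $u_{\e,\delta}$ a classical subsolution of a Fokker--Planck inequality to which Proposition~\ref{prop L infty appendix} applies directly, and the estimate passes to the limit using the mild continuity-in-time assumption. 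No reference solution and no comparison principle are needed. You instead introduce the auxiliary weak solution $w$ with initial datum $u(t_1,\cdot)$, regularize $w$ (not $u$), and try to establish $u\leq w$ by comparison.

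The gap in your argument is precisely the comparison step $u\leq w$, which you yourself flag as the main obstacle. The difference $w-u$ is only an $L^1$ distributional supersolution of a divergence-form parabolic equation, with vanishing initial trace understood only through the weak continuity hypothesis. A ``standard parabolic comparison/maximum principle'' is not available off the shelf at this level of regularity: \cite{Aronson1967Local} concerns weak solutions in the $L^2$-energy class, not merely $L^1$ distributional supersolutions, and \cite{Porretta2015Weak} is invoked in this paper only for \emph{uniqueness} of renormalized/weak solutions, not for a sign-comparison of general sub/supersolutions that are only $L^1$. To close this you would need either (i) a duality argument testing $w-u$ against a smooth nonnegative solution of the adjoint backward equation, which requires justifying the test-function manipulations for a bare $L^1$ distributional supersolution, or (ii) a proof that $u$ lies in the energy class where the comparison principle is known — neither of which is carried out or is immediate. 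The paper's reason for regularizing $u$ directly, rather than introducing $w$ and comparing, is exactly to sidestep this low-regularity comparison. As written, your proof therefore has a genuine gap at this step, even though the overall strategy is plausible; the remaining parts (approximating $w$, applying Proposition~\ref{prop L infty appendix} to the smooth $w_\varepsilon$, and passing to the limit) are fine.
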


\begin{proof}
To prove this estimate the only important point is to regularize the equation so as to apply Proposition~\ref{prop L infty appendix}. In order for the proof to be self-contained, we detail a two-step approximation argument.

We convolve the equation by an approximation of the identity and to apply Proposition~\ref{prop L infty appendix}. However, convolving will not preserve the Dirichlet boundary conditions, so we first have to extend $u$ by zero on a bigger set.

We define $\O^+$ to be a open bounded regular set such that $\O + B_1 \subset \O^+$, where $B_1$ is unit ball in $\R^N$. 

We define $u^+(t,x) := u(t,x)$ if $x\in \O$, and $u^+(t,x) = 0$ elsewhere.
Let $\eta_\e(x)$ be an approximation of the identity whose support is included in $B_1$. We define $u_\e := u^+\star \eta_\e$ (here, $\star$ is the convolution in space only). It is a function which is smooth in $x$ and continuous in $t$. We then convolve in time as well, taking $\chi_\delta(t)$ an approximation of the identity whose support is included in $\R_+$. Defining $u_{\e,\delta}:=\chi_\delta\star u_\e$ we have now a function which is smooth in time and space. It satisfies, in the classical sense,
$$
\partial_t u_{\e,\delta} -\nu \Delta u_{\e,\delta} -\nabla \cdot( u_{\e,\delta} V_{\e,\delta} )\leq 0, \quad \text{ for } \ t\in (0,T), \ x\in \O^+,
$$
with $V_{\e,\delta} := \frac{\chi_\delta\star\eta_\e\star (u V)}{ u_{\e,\delta}} \in C^\infty$. Moreover, the $L^\infty$ norm of $V_{\e,\delta}$ is bounded independently of $\e$ and $\delta$. Then, $u_{\e,\delta}$ is positive, regular and is a (classical) subsolution of a Fokker--Planck equation with regular coefficients, hence we can apply Proposition~\ref{prop L infty appendix}. We then take the limit $\delta\to 0$, and we observe that we have
$$\norm{u_\e(t,\cdot)}_{L^p}=\lim_{\delta\to 0}\norm{u_{\e,\delta}(t,\cdot)}_{L^p}$$
for every $t$, since $u_\e$ is continuous. Then, we have 
$$\norm{u(t,\cdot)}_{L^p}=\lim_{\e\to 0}\norm{u_{\e}(t,\cdot)}_{L^p}$$
from standard properties of convolutions (with the possibility, of course, that this limit and this norm take the value $+\infty$).
\end{proof}

\begin{corollary}\label{reg HJB}
Let $T \in (0,+\infty]$.
Let $f, g\in L^\infty$ and let $u\in L^\infty((0,T);L^2(\O))\cap L^2((0,T); H^1_0(\O))$ be solution (in the weak sense) of
$$
 \partial_t u - \nu\Delta u +  f + g \cdot \nabla u = 0,\quad \text{on} \quad (0,T) \times \Omega,
$$
with Dirichlet boundary conditions and initial datum $u(0,\cdot) = u_0 \in L^2$.

Then, for every $p>1$, and $a \in (0, 1)$ there is $C>0$, depending only on $p,a,  \norm{g}_{L^{\infty}}$ such that
$$
\norm{u(t_2,\cdot)}_{L^{\infty}} \leq C \left( \norm{u(t_1,\cdot)}_{L^p}
+ \norm{f}_{L^{\infty}}\right)
$$
for every $t_1<t_2$ with $a<\abs{t_2-t_1}<a^{-1}$.
\end{corollary}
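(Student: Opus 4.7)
The plan is to follow the same strategy as in the proof of Corollary~\ref{reg FP}: reduce to the smooth-solution setting of Proposition~\ref{prop L infty appendix} by an approximation argument. Unlike the Fokker--Planck situation, however, the transport term $g\cdot\nabla u$ is not in divergence form, so the trick of convolving the equation and writing the transport as $\nabla\cdot(u_{\varepsilon,\delta} V_{\varepsilon,\delta})$ with a bounded $V_{\varepsilon,\delta}$ (which in any case relied on $u\geq 0$) does not adapt directly. I would instead exploit the linearity of the PDE and regularize the coefficients $f$ and $g$ themselves. The essential simplification is that, since the PDE is satisfied as an equality, Proposition~\ref{prop L infty appendix} applies without any positivity assumption on $u$.

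First I would construct smooth approximations $f_n, g_n$ of $f$ and $g$ with $\norm{f_n}_{L^\infty}\leq \norm{f}_{L^\infty}$, $\norm{g_n}_{L^\infty}\leq \norm{g}_{L^\infty}$ and $f_n\to f$, $g_n\to g$ strongly in every $L^q((0,T)\times\O)$ with $q<\infty$ (for instance by mollification followed by truncation), together with $u_{0,n}\in C^\infty_c(\O)$ such that $u_{0,n}\to u_0$ in $L^2(\O)$. Classical theory for linear parabolic equations with smooth coefficients (e.g.,~\cite[Chapter~IV]{Ladyzhenskaja1968Linear}) then provides a unique classical solution $u_n\in C^\infty((0,T)\times\overline\O)\cap C^0([0,T];L^2(\O))$ of
\[
\partial_t u_n - \nu\Delta u_n + f_n + g_n\cdot\nabla u_n = 0 \quad \text{on } (0,T)\times\O,
\]
with homogeneous Dirichlet boundary conditions and initial datum $u_{0,n}$. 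Since $u_n$ is smooth and the equation is an equality, Proposition~\ref{prop L infty appendix} applies with $V=0$, $F=0$ and the given $f_n, g_n$, yielding
\[
\norm{u_n(t_2,\cdot)}_{L^\infty}\leq C\bigl(\norm{u_n(t_1,\cdot)}_{L^p}+\norm{f}_{L^\infty}\bigr),
\]
with $C$ depending only on $p$, $a$, and $\norm{g_n}_{L^\infty}\leq \norm{g}_{L^\infty}$; in particular $C$ is uniform in $n$.

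It then remains to pass to the limit $n\to\infty$. Standard $L^2$ stability for the linear parabolic problem gives $u_n\to u$ in $C^0([0,T];L^2(\O))$, hence $u_n(t_1,\cdot)\to u(t_1,\cdot)$ strongly in $L^p(\O)$ for every $p\in(1,2]$, since $\O$ is bounded. Once the right-hand side of the displayed estimate is bounded uniformly in $n$, so is the left-hand side, and extracting a subsequence along which $u_n(t_2,\cdot)\to u(t_2,\cdot)$ a.e.\ one concludes $\norm{u(t_2,\cdot)}_{L^\infty}\leq \liminf_n \norm{u_n(t_2,\cdot)}_{L^\infty}$, which yields the estimate for $u$.

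The main remaining subtlety is the case $p>2$, where $L^2$ convergence does not immediately yield strong $L^p$ convergence of $u_n(t_1,\cdot)$. I would handle this by a bootstrap: apply the estimate first with exponent $2$ at an intermediate time $t_1^\prime\in(0,t_1)$ (chosen so that the gap condition of Proposition~\ref{prop L infty appendix} is satisfied) to both $u$ and $u_n$; this produces a uniform $L^\infty$ bound on $u_n(t_1,\cdot)$ in terms of $\norm{u(t_1^\prime,\cdot)}_{L^2}$ and $\norm{f}_{L^\infty}$, and combining this uniform bound with the strong $L^2$ convergence gives, by interpolation, the required $L^p$ convergence of $u_n(t_1,\cdot)$, after which the same passage to the limit as above concludes the proof.
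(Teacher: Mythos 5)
Your approach matches the paper's: regularize the coefficients $f,g$ and the initial datum, apply Proposition~\ref{prop L infty appendix} to the smooth solutions $u_n$ of the regularized linear problem (using crucially that the equation holds with equality so no sign assumption on $u$ is needed), then pass to the limit. This is correct in outline.

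Two remarks on the details. First, the claim that $u_n\to u$ in $C^0([0,T];L^2(\O))$ is stronger than what you actually need and would require a careful justification of the cross term $(g_n-g)\cdot\nabla u$ in the energy estimate for $u_n-u$, given that you only assume $g_n\to g$ in $L^q$ for $q<\infty$. The paper sidesteps this by only asserting strong $L^2$ convergence in space--time (obtained from weak $L^2$ convergence of the gradients plus compactness), which gives convergence of $\norm{u_n(t_1,\cdot)}_{L^2}$ only for a.e.\ $t_1$; the estimate is then extended to all $t_1$ using the continuity in time of $u$ as an $L^2(\O)$-valued curve. Second, and more substantively, your treatment of the case $p>2$ via a bootstrap (get a uniform $L^\infty$ bound on $u_n(t_1,\cdot)$ from the $p=2$ estimate at an intermediate time, then interpolate to obtain $L^p$ convergence) is correct but unnecessarily heavy: since $\O$ is bounded, $\norm{u(t_1,\cdot)}_{L^2}\leq\abs{\O}^{\frac12-\frac1p}\norm{u(t_1,\cdot)}_{L^p}$ for $p>2$, so the estimate for $p=2$ directly implies the one for $p>2$ with a constant modified by a factor depending on $\abs{\O}$ and $p$; this is what the paper does in a single line. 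Your argument buys nothing extra here, and replacing it by the elementary embedding would also remove the need to adjust the gap parameter to accommodate the intermediate time $t_1'$.
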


\begin{proof}
Let $f_n,g_n$ be $C^\infty$ and such that $f_n \to f$ and $g_n \to g$ in the $L^2$ norm. Assume moreover that we have $\norm{f_n}_{L^\infty}\to \norm{f}_{L^\infty}$ and $\norm{g_n}_{L^\infty}\to \norm{g}_{L^\infty}$. Let $u_n$ be the solution of
$$
 \partial_t u_n - \nu\Delta u_n +  f_n + g_n \cdot \nabla u_n = 0,\quad \text{on} \quad (0,+\infty) \times \Omega,
$$
with Dirichlet boundary condition and with initial datum $u_0^n$, where $u_0^n$ is a smooth $L^2$ approximation of $u_0$. 

Then, $u_n$ is smooth enough to apply Proposition~\ref{prop L infty appendix} to $u_n$, to get, for $p,t_1,t_2$ as in the statement of the corollary,
\begin{equation}    \label{estimate un LinftyLp}
\norm{u_n(t_2,\cdot)}_{L^{\infty}} \leq C \left( \norm{u_n(t_1,\cdot)}_{L^p} + \norm{f_n}_{L^\infty}
\right).
\end{equation}
Then, as $n$ goes to $+\infty$, $u_n$ converges (the arguments to prove this are standard and based on the weak $L^2$ convergence of $\nabla u_n$) to a solution (in the weak sense) of
$$
 \partial_t u - \nu\Delta u +  f + g \cdot \nabla u = 0,\quad \text{on} \quad (0,+\infty) \times \Omega,
$$
with Dirichlet boundary conditions and with initial datum $u_0$. The convergence is also strong in the $L^2$ sense. Because this solution is unique, it necessarily coincides with the original solution $u$ of the statement. In order to obtain the result, we need to pass to the limit the inequality \eqref{estimate un LinftyLp}. The left-hand side can easily be dealt with by semicontinuity, while for the right-hand side, we suppose $p\leq 2$ and we use strong $L^2$ convergence. Since this convergence is $L^2$ in space-time, we have convergence of the right-hand side only for a.e.\ $t_1$. Yet, using the fact that the solution $u$ is continuous in time as a function valued into $L^2(\O)$, the result extends to all $t_1$. The inequality for $p=2$ implies that with $p>2$, up to modifying the constant in a way depending on $\abs{\O}$.
\end{proof}

The reader may observe that we used different regularization strategies to prove the two above corollaries. Indeed, the linear behavior of the Fokker--Planck equation allowed to directly regularize the solution (up to modifying the drift vector field: we convolve the solution and define a new drift vector field which preserves the same $L^\infty$ bound, a trick which is completely standard for curves in the Wasserstein space, see for instance \cite[Chapter 5]{Santambrogio2015Optimal}). This is not possible for the Hamilton--Jacobi--Bellman equation. However, when uniqueness of the solution is known, regularizing the coefficients and the data of the equation is another option, and it is what we did in our last corollary.

\bigskip

\noindent\textbf{Acknowledgments.} The authors wish to thank many colleagues for useful discussions and suggestions, and in particular Alessio Porretta. Without the comments he made after a talk the second author gave on the topic of the present paper, the strategy to achieve convergence to a solution in the limit $T \to +\infty$ would have been completely different, the result less general, and the time needed to achieve it much longer.

The authors acknowledge the financial support of French ANR project ``MFG'', reference ANR-16-CE40-0015-01, and of a public grant as part of the ``Investissement d'avenir'' project, reference ANR-11-LABX-0056-LMH, LabEx LMH, PGMO project VarPDEMFG. The first author was also partially supported by the by the French IDEXLYON project Impulsion ``Optimal Transport and Congestion Games'' PFI 19IA106udl and the second author was also partially supported by the Hadamard Mathematics LabEx (LMH) through the grant number ANR-11-LABX-0056-LMH in the ``Investissement d'avenir'' project.

\bibliographystyle{abbrv}
\bibliography{Bib}

\end{document}